\DeclarePairedDelimiter{\prn}{(}{)}
\DeclarePairedDelimiter{\set}{\{}{\}}
\DeclarePairedDelimiterX{\Set}[2]{\{}{\}}{\,#1\mathclose{}\nonscript\;\delimsize|\nonscript\;\mathopen{}#2\,}
\DeclarePairedDelimiter{\floor}{\lfloor}{\rfloor}
\DeclarePairedDelimiter{\ceil}{\lceil}{\rceil}
\newcommand{\GF}{\mathrm{GF}}
\newcommand{\ones}{\mathbf{1}}
\DeclareMathOperator{\sgn}{sgn}
\DeclareMathOperator{\chr}{char}
\DeclareMathOperator{\rank}{rank}
\DeclareMathOperator{\supp}{supp}
\newcommand{\Mn}{M\texorpdfstring{${}^\natural$}{-natural}}
\setlist[enumerate]{label={\upshape{(\arabic*)}}}
\newtheorem{theorem}{Theorem}[section]
\newtheorem{lemma}[theorem]{Lemma}
\newtheorem{proposition}[theorem]{Proposition}
\newtheorem{corollary}[theorem]{Corollary}
\newtheorem{conjecture}[theorem]{Conjecture}
\theoremstyle{definition}
\newtheorem{remark}[theorem]{Remark}
\newcommand{\cA}{\mathcal{A}}
\newcommand{\cB}{\mathcal{B}}
\newcommand{\cC}{\mathcal{C}}
\newcommand{\cH}{\mathcal{H}}
\newcommand{\cM}{\mathcal{M}}
\newcommand{\cP}{\mathcal{P}}
\newcommand{\cQ}{\mathcal{Q}}
\newcommand{\cT}{\mathcal{T}}
\newcommand{\cU}{\mathcal{U}}
\newcommand{\cX}{\mathcal{X}}
\newcommand{\bM}{\mathbf{M}}
\newcommand{\K}{\mathbb{K}}
\newcommand{\F}{\mathbb{F}}
\newcommand{\Q}{\mathbb{Q}}
\newcommand{\N}{\mathbb{N}}
\newcommand{\Z}{\mathbb{Z}}
\newcommand{\R}{\mathbb{R}}
\newcommand{\bmcU}{\bm{\cU}}
\newcommand{\bmPhi}{\bm{\Phi}}
\newcommand{\Zp}{\Z_{\ge0}}
\newcommand{\opt}{\mathrm{OPT}}
\newcommand{\given}{\mathrel{|}}
\DeclareMathOperator{\argmax}{arg\,max}
\DeclareMathOperator{\err}{err}
\newcommand{\symdif}{\mathbin{\triangle}}
\newcommand{\note}[3]{{\color{#1}[{\tiny\textbf{#2: #3}}]\marginpar{\color{#1}*}}}
\newcommand{\onote}[1]{\note{purple}{Taihei}{#1}}
\newcommand{\snote}[1]{\note{cyan}{Tamás}{#1}}
\newcommand{\customlabel}[2]{%
\protected@edef\@currentlabel{#2}\label{#1}%
}
\title{Generalizing the Multiple Exchange Property for Matroid Bases}
\date{}
\author{
    Taihei Oki\thanks{Institute for Chemical Reaction Design and Discovery (ICReDD), Hokkaido University, Sapporo, Hokkaido, Japan. D3 Center, The University of Osaka, Osaka, Japan. Center for Advanced Intelligence Project, RIKEN, Tokyo, Japan. E-mail: \texttt{oki@icredd.hokudai.ac.jp}.}
    \and Tamás Schwarcz\thanks{
    HUN-REN Alfréd Rényi Institute of Mathematics, Budapest, Hungary. E-mail: \texttt{tamas.schwarcz@ttk.elte.hu}. Most of this work was done while this author was affiliated to the London School of Economics and Political Science.}
}
\begin{document}
\maketitle

\begin{abstract}
    The multiple exchange property for matroid bases states that for any bases $A$ and $B$ of a matroid and any subset $X\subseteq A\setminus B$, there exists a subset $Y\subseteq B\setminus A$ such that both $A-X+Y$ and $B+X-Y$ are bases.
    This classical result has found applications not only in matroid theory, but also in the analysis and design of various algorithms.
    This paper generalizes the multiple exchange property in two directions.
    First, we prove a common generalization of this and other known basis exchange properties by showing that for any subsets $X \subseteq A \setminus B$ and $Y \subseteq B \setminus A$, there exist subsets $U \subseteq A \setminus B$ and $V \subseteq B \setminus A$ such that $X\subseteq U$, $Y\subseteq V$, $A-U+V$ and $B+U-V$ are bases, and $|U|=|V|$ is at most the rank of $X+Y$.
    Second, we develop a general framework for deriving extensions of the Grassmann--Plücker identity.
    For matroids representable over fields of characteristic zero, this framework yields new exchange and reconfiguration properties, the latter requiring the resulting basis pair to be reachable from the original one by a sequence of symmetric exchanges.
    For this matroid class, we obtain an exchange theorem that simultaneously generalizes our first result and the recent Equitability Theorem (SODA 2026). 
    Within the same representable setting, we also derive a weighted equitability theorem, with an application to matroid-constrained EF1 allocations for two agents with additive valuations.
\end{abstract}

\section{Introduction} \label{sec:intro}

Matroids, introduced by Whitney~\cite{Whitney1935-kz} and Nakasawa~\cite{nakasawa1935zur} as a combinatorial abstraction of linear independence, play a fundamental role in combinatorial optimization~\cite{schrijver2003combinatorial}.
They provide a unifying framework for discrete structures that admit greedy algorithms for optimizing a linear objective function~\cite{edmonds1971matroids}.
More recently, matroids and their quantitative generalizations, valuated matroids, have been employed in mathematical economics to model constraints of allocations and agents’ utility functions satisfying the gross substitutes property~\cite{fujishige2003a,paes2017gross}.

Matroids can be defined in several equivalent ways, but the most basic characterization is through the \emph{exchange property} of bases. 
A matroid $\bM$ consists of a finite ground set $E = E(\bM)$ and a nonempty set family $\cB = \cB(\bM)$ over $E$, called \emph{bases}, satisfying the following \emph{exchange property}: 
for any $A, B \in \cB$ and $x \in A \setminus B$, there exists an element $y \in B \setminus A$ such that $A-x+y \in \cB$, where $A-x+y$ is a shorthand for $(A \setminus \set{x}) \cup \set{y}$. 
This simplest exchange property gives rise to more advanced exchange properties.
The \emph{symmetric} (or \emph{simultaneous}) \emph{exchange property}~\cite{brualdi1969comments} asserts that for any two bases $A$ and $B$ and $x \in A \setminus B$, there exists $y \in B \setminus A$ such that both $A-x+y$ and $B+x-y$ are bases.
This property, in turn, extends to the following \emph{multiple exchange property}, which allows multiple elements to be exchanged simultaneously.
Here, $A-X+Y$ denotes $(A \setminus X) \cup Y$ for sets $A, X, Y$ with $X \subseteq A$ and $Y \cap A = \emptyset$.

\begin{theorem}[{Multiple exchange property~\cite{brylawski1973some, greene1973multiple, woodall1974exchange}}]\label{thm:multiple}
  Let $A$ and $B$ be bases of a matroid.
  For any $X \subseteq A\setminus B$, there exists $Y\subseteq B\setminus A$ such that $A-X+Y$ and $B+X-Y$ are bases.
\end{theorem}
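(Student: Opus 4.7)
The plan is to reformulate the existence of $Y$ as a common basis problem for two matroids on the ground set $B\setminus A$. Introduce the set families
\begin{align}
\cB_1 &= \Set*{Y\subseteq B\setminus A}{\abs{Y}=\abs{X},\ A-X+Y\in\cB},\\
\cB_2 &= \Set*{Y\subseteq B\setminus A}{\abs{Y}=\abs{X},\ B+X-Y\in\cB}.
\end{align}
The independent set $A-X$ extends to the basis $B$ using only elements of $B\setminus A$, so $\cB_1$ is precisely the collection of bases of the rank-$\abs{X}$ matroid $\bM_1$ obtained from $\bM$ by contracting $A-X$ and restricting to $B\setminus A$. Dually, $B+X-Y\in\cB$ is equivalent to $(E\setminus B)-X+Y\in\cB(\bM^*)$, so by the same reasoning $\cB_2$ is the collection of bases of a rank-$\abs{X}$ matroid $\bM_2$ obtained from $\bM^*$ by contracting $(E\setminus B)-X$ and restricting to $B\setminus A$. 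Finding a valid $Y$ is therefore equivalent to finding a common basis of $\bM_1$ and $\bM_2$.

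By the matroid intersection theorem, such a common basis exists if and only if, for every $S\subseteq B\setminus A$ with complement $T:=(B\setminus A)\setminus S$,
\[
\rank_{\bM_1}(S)+\rank_{\bM_2}(T)\ge\abs{X}.
\]
Unfolding the contraction and duality rank formulas, and using the identity $B\setminus T=(A\cap B)\cup S$, this condition reduces after arithmetic rearrangement to the single inequality
\[
\rank_\bM\prn*{(A-X)\cup S}+\rank_\bM\prn*{(A\cap B)\cup S\cup X}\ge r+\abs{A\cap B}+\abs{S},
\]
where $r=\abs{A}=\abs{B}$ is the rank of $\bM$.

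To establish this inequality, I would apply submodularity of $\rank_\bM$ to the pair $R_1:=(A-X)\cup S$ and $R_2:=(A\cap B)\cup S\cup X$. Since $X\subseteq A\setminus B$ and $S\subseteq B\setminus A$, a direct set-theoretic check yields $R_1\cup R_2=A\cup S$ and $R_1\cap R_2=(A\cap B)\cup S$: the former contains the basis $A$ and hence has rank $r$, while the latter is contained in the basis $B$ and is therefore independent of cardinality $\abs{A\cap B}+\abs{S}$. Submodularity then delivers the required bound in a single step. The main obstacle, in my view, is the initial bookkeeping — correctly identifying $\bM_2$ via duality and translating the matroid intersection condition into one inequality in $\bM$-ranks — after which the proof collapses to a single application of submodularity.
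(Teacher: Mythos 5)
Your proof is correct, and the reduction to matroid intersection is carried out cleanly: identifying $\cB_1$ as the bases of $\bM/(A-X)$ restricted to $B\setminus A$, identifying $\cB_2$ dually from $\bM^*$, unfolding Edmonds' matroid intersection condition into a single $\rank_\bM$-inequality, and closing it with submodularity applied to $R_1=(A-X)\cup S$ and $R_2=(A\cap B)\cup S\cup X$. This is essentially Woodall's classical proof of the multiple exchange property, which the paper cites as the starting point for its own argument.

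The paper, however, does not prove \zcref{thm:multiple} directly by this route: it derives it as the $Y=\emptyset$ specialization of the stronger \zcref{thm:main}, whose proof reformulates the problem as a \emph{weighted} matroid intersection on the two contracted matroids, bounds the optimum via Frank's weight-splitting theorem and the greedy formula for minimum-weight bases, and only then uses submodularity. The extra machinery is what buys the additional features of \zcref{thm:main} — that a prescribed $Y\subseteq B\setminus A$ can be included in $V$, and that $|U|=|V|$ can be kept at most $r(X+Y)$ — which your unweighted argument does not give: with $Y\ne\emptyset$ the relevant common bases are no longer all of one rank, and a pure feasibility (unweighted intersection) argument loses the cardinality control. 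So the comparison is: your proof is shorter and more elementary and is exactly what is needed for \zcref{thm:multiple}; the paper's proof is deliberately heavier because it is engineered to yield the generalization.
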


\zcref{thm:multiple} serves as a key ingredient in the analysis and design of various algorithms.
It has proved to be particularly useful for monotone submodular function maximization under matroid constraints, which is a central problem of combinatorial optimization.
For $k$ matroid constraints, Lee, Sviridenko, and Vondr\'ak~\cite{lee2010submodular} used a generalization of the multiple exchange property to prove the $1/(k+\varepsilon)$-approximation guarantee of a local search algorithm.
The current best approximation ratio of
$\frac{2k\ln 2}{1+\ln 2}+O(\sqrt{k})$ for this problem, due to Feldman and
Ward~\cite{feldman2026submodular}, was proved using the generalization of Greene
and Magnanti~\cite{greene1975some}.
Buchbinder, Feldman, and Garg~\cite{buchbinder2023deterministic} used the multiple exchange property to break the long-standing $2$-approximation barrier for a single matroid constraint.
In the single matroid case, it was also used 
in the setting of a noisy value oracle~\cite{huang2022efficient} and, more recently, for giving fixed-parameter tractable algorithms parameterized by the rank~\cite{nematollahi2026fixed}.
Besides submodular maximization, the generalization by Greene and Magnanti~\cite{greene1975some} was used in the breakthrough result of beating the $1.5$-approximation for $s$--$t$ path graph TSP~\cite{traub2023beating} and matching the best known approximation factor for Steiner Tree via a combinatorial local search algorithm~\cite{traub2025better}.
Further applications of the multiple exchange property include approximating maximum weight series-parallel subgraphs~\cite{calinescu2024approximation} and a certain signal processing problem~\cite{li2019scalable}.

In this paper, we generalize \zcref{thm:multiple} so that it encompasses various exchange properties beyond \zcref{thm:multiple} and yields several applications.
We first outline the related work and motivations in \zcref{sec:motivations}, followed by a detailed presentation of our contributions in \zcref{sec:contributions}.

\subsection{Related Work and Motivations}\label{sec:motivations}

\paragraph{Further exchange properties.}

Besides \zcref{thm:multiple}, extensions of the symmetric exchange property have been extensively studied.
One example is the following result of Greene~\cite{greene1974another}, which recovers the symmetric exchange property for $|X|=1$. 

\begin{theorem} [Greene~\cite{greene1974another}] \label{thm:greene}
    Let $A$ and $B$ be bases of a matroid.
    For any $X\subseteq A\setminus B$ and $Y\subseteq B\setminus A$ with $|X|+|Y|\ge |A\setminus B|+1$, there exist nonempty $U\subseteq X$ and $V\subseteq Y$ such that $A-U+V$ and $B+U-V$ are bases.
\end{theorem}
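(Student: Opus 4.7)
The plan is to prove, somewhat unexpectedly, that the conclusion can always be achieved with $|U|=|V|=1$; concretely, that some $x\in X$ and $y\in Y$ satisfy $A-x+y\in\cB$ and $B-y+x\in\cB$.

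For each $x\in A\setminus B$ let
\[
    R(x)\coloneqq\set{y\in B\setminus A : A-x+y\in\cB},\qquad R(X)\coloneqq\bigcup_{x\in X}R(x).
\]
The core subclaim to establish is the bound $|R(X)|\geq|X|$. To prove it, I would invoke only basis augmentation and the basic exchange axiom: since $A\setminus X$ is independent and $B$ is a basis (hence spanning), there exists $V\subseteq B\setminus A$ with $|V|=|X|$ such that $A-X+V\in\cB$. Applying the single-element exchange property to the pair of bases $A$ and $A-X+V$, for every $y\in V=(A-X+V)\setminus A$ there is some $x\in X=A\setminus(A-X+V)$ with $A-x+y\in\cB$. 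Hence $V\subseteq R(X)$ and $|R(X)|\geq|V|=|X|$.

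Now suppose for contradiction that no pair $(x,y)\in X\times Y$ satisfies $A-x+y\in\cB$. Then $R(x)\cap Y=\emptyset$ for every $x\in X$, so $R(X)\subseteq(B\setminus A)\setminus Y$ and $|R(X)|\leq|A\setminus B|-|Y|$. Combining with $|R(X)|\geq|X|$ yields $|X|+|Y|\leq|A\setminus B|$, contradicting the hypothesis. So there exist $x\in X$ and $y\in Y$ with $A-x+y\in\cB$, and by the classical fundamental-circuit identity---in any matroid, $y$ lies in the unique circuit of $B+x$ iff $x$ lies in the unique circuit of $A+y$---the partner exchange $B-y+x\in\cB$ holds as well. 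Taking $U\coloneqq\set{x}$ and $V\coloneqq\set{y}$ completes the proof.

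The main obstacle is conceptual rather than technical: one should resist the instinct to reduce the problem to \zcref{thm:multiple} through a genuine multi-element exchange, and instead notice that the rank inequality $|X|+|Y|\geq|A\setminus B|+1$ is already tight enough to force a single symmetric exchange straddling $X$ and $Y$. Once this is recognized, the argument depends only on basis augmentation, the one-element exchange axiom, and fundamental-circuit duality---no appeal to \zcref{thm:multiple} is required.
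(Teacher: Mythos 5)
The proof breaks down at the final step, and in fact the stronger statement it is aiming for is false.

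The argument up to the existence of a pair $(x,y) \in X \times Y$ with $A-x+y \in \cB$ is correct: the augmentation step, the co-exchange observation applied to $A$ and $A-X+V$, and the counting $|R(X)| \ge |X|$ together with $R(X) \cap Y = \emptyset$ do force $|X|+|Y| \le |A\setminus B|$. So under the hypothesis $|X|+|Y|\ge|A\setminus B|+1$, some one-sided exchange $A-x+y\in\cB$ with $x\in X$, $y\in Y$ exists.

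But the claimed ``fundamental-circuit identity'' -- that $A-x+y\in\cB$ (i.e.\ $x\in C(y,A)$) forces $B+x-y\in\cB$ (i.e.\ $y\in C(x,B)$) when $A$ and $B$ are two \emph{different} bases -- is not a theorem. It conflates the genuine circuit/cocircuit duality for a \emph{single} basis with a relation between two distinct bases that simply does not hold. Concretely, in the graphic matroid of $K_4$ take $A=\{12,13,14\}$ (star at $1$) and $B=\{12,23,34\}$ (path $1\text{--}2\text{--}3\text{--}4$); with $x=13$, $y=34$, the set $A-x+y=\{12,14,34\}$ is a spanning tree, but $B+x-y=\{12,13,23\}$ is a triangle and not a basis. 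So the one-sided exchange produced by the first part of your argument need not close symmetrically.

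Worse, the stronger claim you are trying to prove -- that one can always take $|U|=|V|=1$ -- is false. It is equivalent to the bipartite graph $H$ on $(A\setminus B)\cup(B\setminus A)$ with edges $\{(x,y): A-x+y,\ B+x-y\in\cB\}$ having no independent set larger than $|A\setminus B|$, which by K\H{o}nig's theorem is equivalent to $H$ having a perfect matching, i.e.\ to base orderability of the pair $(A,B)$. As the paper itself notes, not every matroid is base orderable, so there are bases $A,B$ and sets $X,Y$ with $|X|+|Y|>|A\setminus B|$ for which \emph{no} singleton exchange exists. Thus Greene's theorem genuinely requires allowing $|U|=|V|>1$, and the larger exchanges are not an artifact of the proof method. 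The paper derives \zcref{thm:greene} very differently, as a consequence of a Kung-type statement (\zcref{cor:stronger_kung}), which in turn comes from \zcref{thm:ultra_kung} and ultimately from the matroid-intersection/weight-splitting proof of \zcref{thm:main}; the size of the exchanged sets $U,V$ is controlled by rank quantities rather than pinned to $1$.
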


Kung~\cite{kung1978alternating} further observed that the condition $|X|+|Y|\ge |A\setminus B|+1$ in \zcref{thm:greene} can be replaced by the dependence of $X+Y$. 
While these results have not yet found as many applications in the literature as \zcref{thm:multiple}, we explain in \zcref{sec:relation} that they imply several interesting exchange properties, such as
a relaxed base orderability property of all matroids (\zcref{cor:relaxed}) and a uniqueness version of \zcref{thm:multiple} (\zcref{cor:unique-multiple}).

\paragraph{Proofs via algorithms.}
Besides the applications of \zcref{thm:multiple} to algorithm design mentioned above, the theorem also admits a short and natural algorithmic proof.
Specifically, Woodall~\cite{woodall1974exchange} reduced the problem of finding the set $Y$ in \zcref{thm:multiple} to matroid intersection and exploited Edmonds' matroid intersection theorem~\cite{edmonds1970submodular} to show the existence of the desired exchangeable set $Y$.
The algorithmic proof also has the advantage that it yields a polynomial-time algorithm for finding it.
Extensions of Edmonds' theorem and the primal-dual framework are known for the more general problem of weighted matroid intersection~\cite{frank1981weighted}, and it is a natural direction to develop an exchange property using such an advanced framework.

\paragraph{Sequences of symmetric exchanges.} 
An extensive line of research concerns sequences of symmetric exchanges between basis pairs.
For bases $A, A', B, B'$ of a matroid, we say that the ordered basis pairs $(A,B)$ and $(A',B')$ \emph{differ by a symmetric exchange} if there exist $x\in A\setminus B$ and $y\in B\setminus A$ such that $A'=A-x+y$ and $B'=B+x-y$.
For an integer $k\ge 0$, a $k$-step symmetric exchange sequence from $(A,B)$ to $(A',B')$ is a sequence $(A_0,B_0), (A_1,B_1),\dots, (A_k,B_k)$ of basis pairs such that $(A_0,B_0)=(A,B)$, $(A_k,B_k)=(A',B')$, and consecutive pairs differ by a symmetric exchange.
If such a sequence exists, we say that $(A',B')$ is \emph{obtained from $(A,B)$
using $k$ symmetric exchanges}; if such a sequence exists for some $k\ge 0$,
we say that $(A',B')$ is \emph{obtained from $(A,B)$ using symmetric exchanges}.
White~\cite{white1980unique} conjectured that for any pairs of bases $(A,B)$ and $(A',B')$ with $A\cap B =A'\cap B'$ and $A\cup B=A'\cup B'$, $(A',B')$ can be obtained from $(A,B)$ using symmetric exchanges.
This conjecture and the more general conjectures of White~\cite{white1980unique} on tuples of bases have received significant attention, in part due to their connection to toric ideals of matroids, see e.g.~\cite{blasiak2008toric,lason2014toric}.
While the conjecture for basis pairs was verified for regular matroids~\cite{berczi2026reconfiguration},  Larson~\cite{larson2026counterexamples} disproved it for binary matroids in a very recent breakthrough.

The counterexample of Larson~\cite{larson2026counterexamples} still leaves the case $(A',B')=(B,A)$ of White's conjecture open.
This case would also follow from the following extension of the multiple exchange property by taking $X=A\setminus B$.

\begin{conjecture}[Greene and Magnanti~\cite{greene1975some} and Kotlar and Ziv~\cite{kotlar2013serial}] \label{conj:serial-mep}
    Let $A$ and $B$ be bases of a matroid.
    For any $X\subseteq A\setminus B$, there exists $Y\subseteq B\setminus A$ such that $A-X+Y$ and $B+X-Y$ are bases and the basis pair $(A-X+Y,B+X-Y)$ can be obtained from $(A,B)$ using $|X|$ symmetric exchanges.
\end{conjecture}

\zcref{conj:serial-mep} can be formulated equivalently as the existence of $Y\subseteq B\setminus A$ in \zcref{thm:multiple} such that there exist orderings $(x_1,\dots, x_k)$ of $X$ and $(y_1,\dots, y_k)$ of $Y$ such that both $A-\set{x_1, \dots, x_i}+\set{y_1,\dots,y_i}$ and $B+\set{x_1,\dots, x_i}-\set{y_1,\dots, y_i}$ are bases for $i \in [k]$. Greene and Magnanti~\cite{greene1975some} and Kotlar and Ziv~\cite{kotlar2013serial} independently proved the case $|X|=2$ of the conjecture.
For $|X|=3$, McGuinness~\cite{mcguinness2022serial} verified the conjecture for binary matroids, while for general matroids 
Kotlar~\cite{kotlar2013circuits} showed the weaker statement that there exist serially exchangeable subsets of size three whenever $|A\setminus B|\ge 3$.
For $|X|\ge 4$, this weaker variant is already open.
If $X=A\setminus B$, then necessarily $Y=B\setminus A$.
This case of \zcref{conj:serial-mep} is the well-studied conjecture of Gabow~\cite{gabow1976decomposing}, which is known to hold for regular matroids~\cite{berczi2026reconfiguration}.

\paragraph{Grassmann--Plücker identity.}
The original idea of the multiple exchange property by Greene~\cite{greene1973multiple} comes from a determinant identity called the (multiple) \emph{Grassmann--Plücker identity}.
Specifically, for a matrix $K \in \F^{r \times n}$ over a field $\F$, subsets $A, B \subseteq [n] \coloneqq \set{1, \dotsc, n}$ with $|A| = |B| = r$, and any $X \subseteq A \setminus B$, the identity states that
\begin{align}\label{eq:multiple-gp}
    \det K[A] \det K[B] = \sum_{Y \in \binom{B \setminus A}{|X|}} \pm \det K[A - X + Y] \det K[B + X - Y],
\end{align}
where the sign is determined appropriately.
Here, $K[J]$ for $J \subseteq [n]$ denotes the submatrix of $K$ consisting of the columns indexed by $J$, and $\binom{B \setminus A}{|X|}$ denotes the family of subsets of $B \setminus A$ of size $|X|$.
The case $|X|=1$ of the identity~\eqref{eq:multiple-gp} has been extensively studied in projective and algebraic geometry, where it defines the Plücker relations characterizing the Grassmann variety~\cite{gelfand2008discriminants}.

If $K[A]$ and $K[B]$ are nonsingular, or equivalently, if $A$ and $B$ are bases of the matroid represented by $K$, then the right-hand side of~\eqref{eq:multiple-gp} has at least one nonzero term, implying the existence of $Y \in \binom{B\setminus A}{|X|}$ such that $A-X+Y$ and $B+X-Y$ are bases as well.
This proves \zcref{thm:multiple} for representable matroids.
Therefore, seeking extensions of~\eqref{eq:multiple-gp} is a promising approach to further exchange properties for representable matroids.

\paragraph{Equitability.}
Recently, Akrami, Liu, Raj, and Végh~\cite{equitability_journal,equitability_soda} proved the following \emph{Equitability Theorem}, originally conjectured by Fekete and Szabó.

\begin{theorem}[Equitability~\cite{equitability_journal}]\label{thm:equitability}
    Let $\bM$ be a matroid, and assume that the ground set $E = E(\bM)$ can be partitioned into $k \ge 1$ disjoint bases. Let $T \subseteq E$ be an arbitrary subset.
    Then, there exists a partition $(B_1, \dotsc, B_k)$ of $E$ into $k$ bases such that $\floor*{\frac{|T|}{k}} \le |B_i \cap T| \le \ceil*{\frac{|T|}{k}}$ holds for $i \in [k]$. Moreover, such bases can be found in polynomial time.
\end{theorem}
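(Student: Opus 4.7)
The plan is to use local search guided by a potential function measuring deviation from equitability. Starting from any partition $\mathcal{P} = (B_1, \dotsc, B_k)$ of $E$ into bases (which exists by the hypothesis), I would work with $\Phi(\mathcal{P}) = \sum_{i=1}^k \prn*{|B_i \cap T| - |T|/k}^2$, a strictly convex deviation score whose minimum over integer partitions is attained exactly at equitable ones. The goal is to show that whenever $\Phi(\mathcal{P})$ exceeds this minimum, there is a sequence of basis exchanges that strictly decreases $\Phi$. Since $\Phi$ takes only finitely many values, the iteration terminates at an equitable partition, and a careful accounting of the number and cost of exchanges would yield the polynomial-time claim.

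For the reduction step, whenever $\Phi$ is above the equitable minimum there must exist indices $i, j$ with $|B_i \cap T| - |B_j \cap T| \ge 2$. The simplest candidate update is a single symmetric exchange: find $x \in B_i \cap T$ and $y \in B_j \setminus T$ with $B_i - x + y$ and $B_j + x - y$ both bases, which strictly reduces $\Phi$. If no such direct swap is available, I would pursue an augmenting-path argument in a directed auxiliary graph on $E$ whose arcs $x \to y$ encode valid single-element exchanges across the parts of $\mathcal{P}$; a directed path beginning at some $T$-element of an overloaded $B_i$ and terminating at a non-$T$-element of an underloaded $B_j$, along which the intermediate moves preserve $T$-membership of the swap endpoints in a compatible way, can then be applied in sequence to reduce $\Phi$.

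The hard part is proving that such an augmenting path must exist whenever $\Phi$ is above its minimum. I would attempt a contradiction argument: let $S$ be the set of elements reachable from the overloaded $T$-side in the auxiliary graph, and study the interaction of $S$ with $T$ and with the partition. Reachability forces $S$ to be closed under suitable exchanges, and the absence of an augmenting terminus should rigidly constrain how $S$ meets the parts of $\mathcal{P}$, leading to a rank inequality that contradicts the hypothesis that $E$ partitions into $k$ bases. Making this contradiction rigorous is the delicate step; the classical symmetric and multiple exchange properties of \zcref{thm:multiple} do not obviously suffice, which is consistent with the theorem having remained open for fourteen years, and the more general exchange property introduced in this paper appears to be precisely the tool needed to push the reachability analysis through.
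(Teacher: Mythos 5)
Your high-level plan — start from an arbitrary partition into bases, measure imbalance with the convex potential $\Phi(\mathcal{P}) = \sum_i \bigl(|B_i \cap T| - |T|/k\bigr)^2$, and iteratively apply an exchange between an overloaded and an underloaded part to strictly decrease $\Phi$ — is exactly the architecture used by Akrami, Raj, and Végh (the source cited for \zcref{thm:equitability}), and it is the structure the paper itself describes: \zcref{thm:equitability} is obtained by \emph{repeated application of} \zcref{thm:equitability-exchange}. So far so good.

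The gap is in the reduction step, and it is not a small one: you never isolate, let alone prove, the exchange lemma that makes a single iteration go through. The crucial fact is \zcref{thm:equitability-exchange}: for bases $A,B$ and $X \subseteq A\setminus B$, $Y \subseteq B\setminus A$ with $|X| > |Y|$, there are $U \subseteq A\setminus B$, $V \subseteq B\setminus A$ with $|X\setminus U|=1$, $Y \subseteq V$, and $A-U+V$, $B+U-V$ both bases. Applied with $A=B_i$, $B=B_j$, $X=(B_i\setminus B_j)\cap T$, $Y=(B_j\setminus B_i)\cap T$ when $|B_i\cap T| \ge |B_j\cap T|+2$, a short computation shows that the new pair has $T$-counts $(|B_j\cap T|+1,\ |B_i\cap T|-1)$, which strictly reduces $\Phi$. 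Your single-symmetric-exchange alternative would indeed be ideal, but it is not always available: one cannot in general move a $T$-element of $B_i$ for a non-$T$-element of $B_j$ in a symmetric swap, and \zcref{thm:equitability-exchange} compensates by allowing the \emph{rest} of the $T$-elements of $A\setminus B$ to migrate simultaneously. Your augmenting-path fallback is precisely the ``hard part,'' and you leave it as a contradiction argument ``to be made rigorous.'' That is the entire substance of the theorem; without it the proposal is a plan, not a proof.

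A factual correction you should note: you conjecture that ``the more general exchange property introduced in this paper appears to be precisely the tool needed.'' It is not. The paper is explicit that its \zcref{thm:main} and \zcref{thm:equitability-exchange} ``do not seem to imply each other''; a common generalization is only \emph{conjectured} (\zcref{conj:ultra-main}). What the paper \emph{does} prove is \zcref{cor:ultra-main-zero}, obtained from the generalized Grassmann--Pl\"ucker identity \zcref{thm:ultra-generalized-grassmann-pluecker}, which implies \zcref{thm:equitability-exchange} — and hence \zcref{thm:equitability} by the potential-function argument — but \emph{only} for matroids representable over a field of characteristic zero. For the general matroid case, the paper relies on the original argument of Akrami, Raj, and Végh, which you would have to reproduce. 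Given that the theorem was open for fourteen years, it is not plausible that the reachability/augmenting-path contradiction you gesture at, using only classical symmetric or multiple exchange, can be made to work.
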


The proof of \zcref{thm:equitability} in~\cite{equitability_journal} is based on the repeated application of the following exchange property. 

\begin{theorem}[Akrami, Liu, Raj,  V\'egh~\cite{equitability_journal}]\label{thm:equitability-exchange}
  Let $A$ and $B$ be bases of a matroid.
  For any $X\subseteq A\setminus B$ and a bipartition $(Y_1, Y_2)$ of $B\setminus A$ with $|X|>|Y_1|$, there exist $U \subseteq X$ and $V \subseteq B \setminus A$ such that $|V \cap Y_2|=1$ and $A-U+V$ and $B+U-V$ are bases.
\end{theorem}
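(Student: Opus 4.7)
I propose to prove \zcref{thm:equitability-exchange} via a maximality argument, using \zcref{thm:multiple} together with the symmetric exchange property as the main tools. Let $\mathcal{P}$ denote the family of pairs $(U, V)$ with $U \subseteq A \setminus B$, $V \subseteq B \setminus A$, $|U| = |V|$, $Y \subseteq V$, and such that both $A - U + V$ and $B + U - V$ are bases. Applying \zcref{thm:multiple} to $Y$ in the basis pair $(B, A)$ furnishes some $(U_0, Y) \in \mathcal{P}$, so $\mathcal{P}$ is nonempty. Among all pairs in $\mathcal{P}$, the plan is to choose one with $|U \cap X|$ maximal and, subject to that, with $|U|$ minimal. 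After contracting $A \cap B$ so that $A, B$ become disjoint bases, and writing $A' := A - U + V$ and $B' := B + U - V$, the goal is to show that the optimally chosen $(U, V)$ satisfies $|X \setminus U| = 1$.

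First, suppose for contradiction that $|X \setminus U| \ge 2$ and pick $x \in X \setminus U$; then $x \in A' \setminus B'$. The symmetric exchange property yields $v \in B' \setminus A' = U \cup (B \setminus V)$ with both $A' - x + v$ and $B' + x - v$ bases. If $v \in B \setminus V$, then $(U + x,\ V + v) \in \mathcal{P}$ strictly increases $|U \cap X|$; if $v \in U \setminus X$, then $(U + x - v,\ V) \in \mathcal{P}$ does the same. Both sub-cases contradict maximality. The only remaining possibility is $v \in U \cap X$, in which single swaps can ``cycle'' among elements of $X$ without improving the count; to break this I would invoke \zcref{thm:multiple} on the whole set $X \setminus U \subseteq A' \setminus B'$ to obtain a global set $V_0 \subseteq U \cup (B \setminus V)$ of size $|X \setminus U|$, split $V_0 = V_{0,1} \sqcup V_{0,2}$ with $V_{0,1} \subseteq U$ and $V_{0,2} \subseteq B \setminus V$, and build the new pair $\bigl((U \cup X) \setminus V_{0,1},\ V \cup V_{0,2}\bigr) \in \mathcal{P}$. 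A counting analysis of $|V_{0,1} \cap X|$ versus $|V_0| = |X \setminus U|$, combined with the freedom in choosing $V_0$, should yield the desired strict improvement of $|U \cap X|$.

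Second, suppose $|X \setminus U| = 0$, i.e.\ $X \subseteq U$. Since $|V| = |U| \ge |X| > |Y|$, there exists $v^* \in V \setminus Y \subseteq A' \setminus B'$. The symmetric exchange yields $u \in B' \setminus A'$ with both $A' - v^* + u$ and $B' + v^* - u$ bases. If $u \in X \cap U$, the pair $(U - u,\ V - v^*) \in \mathcal{P}$ satisfies $|X \setminus (U - u)| = 1$ and we are done. Otherwise $u \in (U \setminus X) \cup (B \setminus V)$, and a small case analysis yields a new pair in $\mathcal{P}$ with the same $|U \cap X| = |X|$ but strictly smaller $|U|$, contradicting minimality.

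The main obstacle is the degenerate ``cyclic'' case $v \in U \cap X$ in the first scenario, where single-element symmetric exchanges fail to make progress. Overcoming it requires passing to \zcref{thm:multiple} applied to the whole set $X \setminus U$, and the subtlety lies in verifying that among the possible sets $V_0$ produced by \zcref{thm:multiple}, at least one lies partially outside $X \cap U$---this is where a careful structural argument, probably via a minimal-counterexample refinement, is needed.
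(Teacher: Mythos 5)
First, a remark on scope: the paper does not prove \zcref{thm:equitability-exchange} for general matroids. It is cited directly from Akrami, Raj, and Végh~\cite{equitability}, and the only (alternative) proof the paper supplies is for matroids representable over a field of characteristic zero, obtained from the Grassmann--Pl\"ucker identity (\zcref{thm:ultra-generalized-grassmann-pluecker} via \zcref{cor:ultra-main-zero}). So there is no internal proof for you to match; what can be assessed is whether your attempt stands on its own.

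Your extremality scheme (maximize $|U\cap X|$, then minimize $|U|$, starting from the nonempty family $\mathcal{P}$ obtained by applying \zcref{thm:multiple} to $Y$) is a reasonable starting point, and the computation showing the exchanged sets indeed land in $\mathcal{P}$ is correct. However, both terminal cases have genuine gaps. In Case~$1$ (where $|X\setminus U|\ge 2$), you correctly observe that a single symmetric exchange can get stuck in the sub-case $v\in U\cap X$, and you propose to escape via \zcref{thm:multiple} applied to $X\setminus U$; but, as you acknowledge yourself, the resulting set $V_0$ may lie entirely inside $U\cap X$ (then $V_{0,2}=\emptyset$ and $|V_{0,1}\cap X|=|X\setminus U|$), in which case the new pair has exactly the same $|U\cap X|$ and $|U|$ and no progress is made; your appeal to a ``minimal-counterexample refinement'' does not identify what invariant would actually decrease, so this case remains unresolved. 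In Case~$2$ (where $X\subseteq U$), the claim in the last sub-case is simply wrong: if the symmetric exchange of $v^*\in V\setminus Y$ returns $u\in B\setminus V$, the natural new pair is $(U,\ V-v^*+u)\in\mathcal{P}$, which has \emph{unchanged} $|U|$ and unchanged $|U\cap X|$, so no contradiction with minimality arises. The assertion that ``a small case analysis yields a new pair with strictly smaller $|U|$'' fails here, and there is no reason the symmetric exchange cannot repeatedly return elements of $B\setminus V$. Both gaps are precisely the hard core of the theorem; as written, the attempt is incomplete.
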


While \zcref{thm:equitability-exchange} contains the symmetric exchange property, \zcref{thm:multiple,thm:equitability-exchange} do not seem to imply each other.
Hence, it is natural to expect a common generalization of \zcref{thm:multiple,thm:equitability-exchange}.

\paragraph{EF1 allocations.}
    \zcref{thm:equitability} has a natural interpretation in the context of matroid-constrained fair division problems.
    This line of research was initiated by Barman and Biswas~\cite{biswas2018fair}.
    Formally, an instance of the \emph{matroid-constrained fair division problem} is a tuple $(n, E, (v_i)_{i \in [n]}, \bM)$ consisting of the number of agents $n$, a finite ground set $E$ of indivisible goods,  valuation functions $v_i\colon 2^E\to \R_{\ge 0}$ for $i \in [n]$, and a matroid $\bM$ on ground set $E$.
    A \emph{feasible allocation} is a partition $(A_1,\dots, A_n)$ of $E$ into $n$ bases of $\bM$.
    The allocation is \emph{envy-free up to one good (EF1)} if for all agents $i,j\in [n]$ with $i\ne j$, there exists $Y\subseteq A_j$ with $|Y|\le 1$ such that $v_i(A_i)\ge v_i(A_j-Y)$.
    Using this terminology, \zcref{thm:equitability} directly shows the existence of matroid-constrained EF1 allocations whenever $E$ can be partitioned into $n$ bases and the valuation functions are identical, additive, and binary, that is, they take values $0$ or $1$ on single elements.
    Akrami, Liu, Raj, and Végh~\cite{equitability_journal} used \zcref{thm:equitability-exchange} to extend this result to identical, additive, and tri-valued valuations, meaning that they take at most three different values on single elements.   
    By a cut-and-choose method, this also settles the case of two agents if at least one of the two valuations is additive and tri-valued~\cite{equitability_journal}.
    Besides these breakthrough results on general matroids, further positive results are known for regular matroids with any number of agents and identical additive valuations~\cite{berczi2026reconfiguration,equitability_journal}, and for base-orderable matroids in certain two- and three-agent cases~\cite{dror2023fair}.

\paragraph{Robustness of local search.}
An unexpected application of our exchange property lies in the \emph{robustness} of algorithms.
In practical optimization, we often encounter situations where only noisy or imperfect information is available.
Therefore, it is desirable for an algorithm to possess a robustness property, ensuring that small perturbations in the input lead to only minor deviations in the output.
For monotone submodular function maximization, the greedy algorithm that repeatedly appends an element with the maximal marginal gain achieves a $(1 - 1/\mathrm{e})$-approximation ratio and, moreover, is known to be robust in the sense that if, at each iteration, the algorithm selects an element whose marginal gain is not exactly maximal but deviates from the true maximizer by at most some local error, then the final function value deteriorates by at most the sum of these local errors compared with the ideal $1 - 1/\mathrm{e}$ approximation~\cite{niazadeh2023online,nie2022an,streeter2008an}.
For linear objective optimization over matroid bases, Oki and Sakaue~\cite{oki2024no} established an analogous robustness property of the greedy algorithm within the more general framework of \Mn-concave functions.

We turn to the local-search algorithm for linear optimization over matroid bases.
Starting from an arbitrary basis $B_0$, the algorithm iteratively updates it by replacing at most one element at a time: at the $k$th iteration, it finds a basis $B_k$ satisfying $|B_k \setminus B_{k-1}| \le 1$ that maximizes the objective value among all bases $B$ with $|B \setminus B_{k-1}| \le 1$.
It is known that, for any $k \ge 0$, the resulting $B_k$ is optimal among all bases $B \in \cB$ such that $|B \setminus B_0| \le k$~\cite{Shioura2022-tc}.
This property enables a warm-started local search from a near-optimal basis~\cite{oki2023faster}.
However, the theoretical guarantee of the performance of the algorithm when the maximization at each iteration is performed with some error has remained open.

\subsection{Our Contributions}\label{sec:contributions}

This paper makes two main contributions: one extending \zcref{thm:multiple} and another providing stronger extensions for representable matroids, in some cases under characteristic constraints of the ground field.
The relations among the exchange properties are summarized in \zcref{fig:relations}.

\begin{figure}[t]
    \centering
    \begin{tikzpicture}[
        >=Latex,
        box/.style={
            font=\small,
            draw,
            rounded corners,
            align=center,
            inner sep=4pt,
            minimum width=16mm,
        },
        rep/.style={
            box,
            dashed,
        },
        lbl/.style={
            font=\small,
            midway,
            above,
        },
        solid arrow/.style={
            ->,
        },
        dashed arrow/.style={
            ->,
            dashed,
        },
        bent/.style={
            rounded corners=5pt,
        },
    ]

        % Main results
        \node[rep] (master) at (4.4,6.0)
            {\zcref{thm:gp-partition} [\textbf{This paper}]};

        \node[rep] (twomax) at (0.8,4.9)
            {\zcref{thm:two-max} [\textbf{This paper}]};

        \node[rep] (withranks) at (0.8,3.8)
            {\zcref{thm:with_ranks} [\textbf{This paper}]};

        \node[rep] (blue) at (8,4.9)
            {\zcref{thm:blue} [\textbf{This paper}]};

        % Reconfiguration results
        \node[rep] (reconfmain) at (0.8,2.5)
            {Reconfiguration version of \zcref{thm:main}~(i)\\
             (\zcref{cor:reconf-main}) [\textbf{This paper}]};

        \node[rep] (reconfARV) at (8,2.5)
            {Reconfiguration version of \zcref{thm:equitability-exchange}\\
             (\zcref{cor:reconf-arlv})~[\textbf{This paper}]};

        \node[rep] (reconfmep) at (4.5,0.8)
            {Reconfiguration version of MEP\\
             (\zcref{thm:reconf-mep}) [\textbf{This paper}]};

        % Classical and non-reconfiguration results
        \node[box] (main) at (-1.8,0.8)
            {\zcref{thm:main}~(i) [\textbf{This paper}]};

        \node[box] (ARV) at (10.3,0.8)
            {Akrami, Liu, Raj, Végh\\
             (\zcref{thm:equitability-exchange})~
             \cite{equitability_journal,equitability_soda}};

        \node[box] (Kung) at (-1.8,-0.5)
            {Kung~\cite{kung1978alternating}};

        \node[rep] (WEquit) at (7.3,-0.7)
            {Weighted Equitability\\
             (\zcref{thm:weighted-equitability} (i)) [\textbf{This paper}]};

        \node[box] (MEP) at (2.6,-1.2)
            {MEP (\zcref{thm:multiple})\\
             \cite{brylawski1973some,greene1973multiple,woodall1974exchange}};

        \node[box] (Greene) at (-1.8,-2.0)
            {Greene (\zcref{thm:greene})~\cite{greene1974another}};

        \node[box] (Equit) at (7.3,-2.4)
            {Equitability\\
             (\zcref{thm:equitability})~\cite{equitability_journal}};

        \node[box] (SEP) at (2.6,-3.2)
            {Symmetric exchange property~\cite{brualdi1969comments}};

        % Dashed implications
        \draw[dashed arrow,bent]
            (master.west) -| (twomax.north);

        \draw[dashed arrow,bent]
            (master.east) -| (blue.north);

        \draw[dashed arrow]
            (twomax) -- (withranks);

        \draw[dashed arrow]
            (withranks.south)
            --
            (withranks.south |- reconfmain.north);

        \draw[dashed arrow,bent]
            (withranks.west)
            -|
            ([xshift=8mm]main.north west);

        \draw[dashed arrow,bent]
            (withranks.east) -| (reconfARV.north);

        \draw[dashed arrow,bent]
            (blue.west) -| ([xshift=28mm]reconfmain.north);

        \draw[dashed arrow]
            (blue) -- (reconfARV);

        \draw[dashed arrow]
            ([xshift=-17.8mm]reconfmain.south)
            --
            ([xshift=-17.8mm]reconfmain.south |- main.north)
            node[lbl,right,xshift=0.2cm,align=center]
                {$\chr=0$};

        \draw[dashed arrow]
            ([xshift=28mm]reconfmain.south)
            --
            ([xshift=28mm]reconfmain.south |- reconfmep.north)
            node[lbl,right,xshift=0.2cm,align=center]
                {$\chr=0$};

        \draw[dashed arrow]
            ([xshift=15mm]reconfmep.south)
            --
            ([xshift=15mm]reconfmep.south |- WEquit.north);

        \draw[dashed arrow]
            ([xshift=-9.1mm]reconfmep.south)
            --
            ([xshift=-9.1mm]reconfmep.south |- MEP.north)
            node[lbl,left,align=center]
                {$\chr \notin \set{2,\dotsc,|X|}$};

        \draw[dashed arrow]
            (WEquit)
            --
            (Equit)
            node[lbl,right,xshift=0.1cm,align=center]
                {$\chr=0$};

        \draw[dashed arrow]
            ([xshift=10mm]reconfARV.south)
            --
            ([xshift=10mm]reconfARV.south |- ARV.north)
            node[lbl,right,xshift=0.2cm,align=center]
                {$\chr=0$};

        % Solid implications
        \draw[solid arrow]
            (main) -- (Kung);

        \draw[solid arrow,bent]
            ([xshift=13mm]main.south) |- (MEP.west);

        \draw[solid arrow]
            (Kung) -- (Greene);

        \draw[solid arrow]
            (MEP) -- (SEP);

        \draw[solid arrow,bent]
            (Greene.south) |- (SEP.west);

        \draw[solid arrow,bent]
            (ARV.south) |- (Equit.east);

        \draw[solid arrow,bent]
            (ARV.south) |- (SEP.east);
    \end{tikzpicture}

    \caption{%
        Relations among exchange properties.
        Theorems surrounded by dashed lines hold only for representable matroids, in some cases under characteristic constraints.
        A solid arrow $A \to B$ means that $A$ contains $B$ as a special case, and a dashed arrow indicates that the implication is only for representable cases.
        ``MEP'' stands for ``Multiple Exchange Property''.
    }
    \label{fig:relations}
\end{figure}

As the first main contribution, we show the following exchange property.

\begin{comment}
\begin{restatable}{theorem}{main}\label{thm:main}
    Let $A$ and $B$ be bases of a matroid $\bM$ with rank function $r$.
    For any $X\subseteq A\setminus B$ and $Y\subseteq B\setminus A$, there exist $U \subseteq A\setminus B$ and $V \subseteq B \setminus A$ such that $X\subseteq U$, $Y \subseteq V$, $|U| = |V|\le r(X+Y)$, and $A-U+V$ and $B+U-V$ are bases.
\end{restatable}
\begin{restatable}{theorem}{mainalgo}\label{thm:main-algo}
    Given bases $A$ and $B$ of a matroid $\bM$, $X\subseteq A\setminus B$, and $Y\subseteq B\setminus A$, we can find a pair $(U, V)$ as in \zcref{thm:main} using a polynomial number of independence oracle queries to $\bM$.
\end{restatable}
\end{comment}

\begin{restatable}{theorem}{main}\label{thm:main}
    Let $A$ and $B$ be bases of a matroid $\bM$ with rank function $r$, $X\subseteq A\setminus B$, and $Y\subseteq B\setminus A$.
    Then, (i) there exist $U \subseteq A\setminus B$ and $V \subseteq B \setminus A$ such that $X\subseteq U$, $Y \subseteq V$, $|U| = |V|\le r(X+Y)$, and $A-U+V$ and $B+U-V$ are bases, and (ii) such a pair $(U,V)$ can be found using a polynomial number of independence oracle queries to $\bM$.
\end{restatable}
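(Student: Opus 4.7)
The plan is to reduce to the case of complementary bases, reformulate in terms of $2$-basis partitions, use matroid union for existence, and establish the rank bound by induction on $|Y|$.

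First I would reduce to $A \cap B = \emptyset$ and $A \cup B = E$ by contracting $A \cap B$ and restricting to $A \cup B$; this preserves the hypotheses and conclusion. Finding $(U, V)$ as required is then equivalent to finding a $2$-basis partition $(A', B') = (A - U + V, B + U - V)$ of $E$ with $Y \subseteq A'$ and $X \subseteq B'$, where $V = A' \cap B$ and $U = B' \cap A$; the size bound becomes $|V| \le r(X \cup Y)$. Existence (ignoring the bound) would follow from matroid union applied to $\bM_1 = (\bM / Y) \setminus X$ and $\bM_2 = (\bM / X) \setminus Y$ on $E' = E \setminus (X \cup Y)$: their ranks $r - |Y|$ and $r - |X|$ sum to $|E'|$, and for every $T \subseteq E'$ the condition $r_{\bM / Y}(T) + r_{\bM / X}(T) \ge |T|$ follows by splitting $T = T_A \sqcup T_B$ with $T_A \subseteq A \setminus X$ and $T_B \subseteq B \setminus Y$, so that $T_A \cup X \subseteq A$ and $T_B \cup Y \subseteq B$ are independent and yield $r_{\bM / X}(T) \ge |T_A|$ and $r_{\bM / Y}(T) \ge |T_B|$.

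The crux is the rank bound. I would proceed by induction on $|Y|$. The base case $|Y| = 0$ is exactly \zcref{thm:multiple} applied to $X$: setting $U = X$ gives $|U| = |X| = r(X) = r(X \cup Y)$. For the inductive step, fix $y \in Y$ and apply the hypothesis to $(X, Y - y)$, obtaining $(U', V')$ with $|U'| = |V'| \le r(X \cup (Y - y))$ and bases $A_1 = A - U' + V'$, $B_1 = B + U' - V'$. If $y \in V'$ we take $(U, V) = (U', V')$. Otherwise, I would distinguish whether $y$ lies in the closure of $X \cup (Y - y)$: if not, then $r(X \cup Y)$ exceeds $r(X \cup (Y - y))$ by one, and a symmetric exchange of $y$ in $(A_1, B_1)$ with a partner $x \in A \setminus U'$ yields $(U, V) = (U' + x, V' + y)$ respecting the bound; if so, the rank does not increase, and I must instead swap $y$ into $V'$ in place of some $x \in V' \setminus (Y - y)$, keeping $|V|$ unchanged.

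The main obstacle is precisely this last subcase, where $y$ is in the closure of $X \cup (Y - y)$: symmetric exchange in $(A_1, B_1)$ may force the partner $x$ to lie in $A \setminus U'$ rather than in $V' \setminus (Y - y)$, which would violate the rank bound. Overcoming this likely requires exploiting the dependence via Kung's strengthening of \zcref{thm:greene} applied to the dependent set $X \cup (Y - y) \cup \{y\}$, which should guarantee an exchange pair lying inside $V' \cup \{y\}$ and thereby produce the desired $x \in V' \setminus (Y - y)$. Alternatively, one can recast the whole argument as a constrained or weighted matroid-union problem that minimizes $|A' \cap B|$ over valid $2$-basis partitions, but in either formulation, bounding this minimum by $r(X \cup Y)$ is the central technical challenge.
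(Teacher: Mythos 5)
Your reduction to $A\cap B=\emptyset$, $A\cup B=E$ and the reformulation as a $2$-basis partition of $E$ are correct and parallel to the paper's setup. The matroid-union argument for bare existence (i.e.\ ignoring the size bound) is also sound, though the paper phrases the same fact as matroid intersection in $\bM/X\backslash Y$ and $\bM^*/X\backslash Y$. Your closing remark---``recast the whole argument as a \ldots\ weighted matroid-union problem that minimizes $|A'\cap B|$''---is in fact exactly what the paper does: it minimizes $\ones_{A-X}(B')$ over common bases, invokes Frank's weight splitting theorem (\zcref{thm:weight-splitting}) to express the optimum, evaluates the resulting greedy sums explicitly, and then bounds the answer by $r(X+Y)-|X|$ using submodularity of the rank function. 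You identify this as ``the central technical challenge'' but do not carry it out, so the two approaches diverge precisely where the difficulty lies.

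The inductive route you propose instead has a genuine gap in the subcase $y\notin V'$, $y\in\mathrm{cl}\bigl(X\cup(Y-y)\bigr)$, and I don't think Kung's theorem closes it. When you apply Kung (or \zcref{cor:stronger_kung}) to the base pair $(A_1,B_1)=(A-U'+V',\,B+U'-V')$, the parts of the dependent set $X\cup Y$ on the two sides are $Y-y\subseteq A_1\setminus B_1$ and $X\cup\{y\}\subseteq B_1\setminus A_1$; Kung then produces nonempty $U_0\subseteq Y-y$ and $V_0\subseteq X\cup\{y\}$ that are simultaneously exchangeable. But any such $U_0$ strips an element of $Y$ out of the set you're building towards $V$, and if $V_0$ meets $X$ it pulls an element of $X$ out of $U$; either outcome violates the containments $X\subseteq U$ and $Y\subseteq V$ that you need. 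Kung simply gives no control over which side of the dependent set the exchanged elements come from. Moreover, there are two further unaddressed possibilities in the unconstrained symmetric exchange of $y$ against $A_1\setminus B_1=(A\setminus U')\cup V'$: the partner $x$ may land in $Y-y\subseteq V'$ (not only in $A\setminus U'$), again breaking $Y\subseteq V$; and when $|V'|<r(X+(Y-y))$ you may actually have room to grow $(U,V)$ by one, so the case split ``rank increases vs.\ rank does not increase'' does not cleanly tell you whether to grow or swap. This is why the paper avoids induction entirely: the global min--max formulation via weight splitting sidesteps having to track which individual elements enter or leave $U$ and $V$ at each step.
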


\zcref{thm:main} specializes to \zcref{thm:multiple} if $Y=\emptyset$.
Like Woodall's proof~\cite{woodall1974exchange} of \zcref{thm:multiple}, our proof of \zcref{thm:main} is algorithmic.
We formulate the problem of finding the sets $U$ and $V$ as weighted matroid intersection, and bound the optimal value via Frank’s weight splitting theorem~\cite{frank1981weighted}.
While the proof idea stems from Woodall's proof, our proof is more involved as it uses the submodularity of rank functions several times and exploits the min-max theorem for the minimum-weight basis problem.
As our proof is algorithmic, it also provides a polynomial-time algorithm for finding the sets $U$ and $V$ through any weighted matroid intersection algorithm. 

Besides \zcref{thm:multiple}, \zcref{thm:main} also includes Kung's exchange property~\cite{kung1978alternating}, and hence \zcref{thm:greene}.
This inclusion is made explicit in \zcref{thm:ultra_kung} by formulating \zcref{thm:main} in a manner similar to \zcref{thm:greene} with the additional constraint that the common size of the symmetrically exchangeable subsets $U, V$ lies in an interval of length $r(X+Y)-\max\set{|X|,|Y|}$.
To show the relevance of these consequences, we observe that \zcref{thm:greene} already implies certain relaxed base-orderability properties of all matroids and a relaxation of Gabow's conjecture~\cite{gabow1976decomposing} on serial exchanges (\zcref{cor:relaxed}).

As an application of \zcref{thm:main}, we prove the following robustness property of the local-search algorithm of matroid bases, asserting that its global suboptimality is bounded by the sum of the local errors.

\begin{restatable}{theorem}{robustness}\label{thm:robustness}
  Let $\bM$ be a matroid with ground set $E$ and basis family $\cB$, and $w\colon E \to \R$ a weight function.
  Let $B_0, B_1, \dotsc, B_k$ be a sequence of bases with $|B_i \setminus B_{i-1}| \le 1$ for all $1 \le i \le k$.
  Then,
  \begin{align}
      w(B_k) \ge \max\Set{w(B)}{B \in \cB, |B \setminus B_0| \le k} - \sum_{i=1}^k \err(B_i \given B_{i-1})
  \end{align}
  holds, where $\err(B_i \given B_{i-1}) \coloneqq \max\Set{w(B)}{B \in \cB, |B \setminus B_{i-1}| \le 1} - w(B_i) \ge 0$ for $1 \le i \le k$.
\end{restatable}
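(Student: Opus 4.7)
The plan is to construct a sequence of ``shadow'' bases $B^*_0, B^*_1, \dotsc, B^*_k$ along the algorithm's trajectory, with $B^*_0 \in \argmax\Set{w(B)}{B \in \cB,\ |B \setminus B_0| \le k}$, satisfying the invariants $|B^*_i \setminus B_i| \le k - i$ and $w(B^*_i) \ge w(B^*_0) - \sum_{j=1}^{i} \err(B_j \given B_{j-1})$ for every $i$. The distance invariant at $i = k$ forces $B^*_k = B_k$, and the weight invariant then yields the claimed inequality.

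For the inductive construction of $B^*_i$ from $B^*_{i-1}$, I write the $i$th step of the algorithm as $B_i = B_{i-1} - y_i + x_i$ (the easy case $B_i = B_{i-1}$ is handled by a single symmetric exchange on $(B^*_{i-1}, B_{i-1})$). The plan is to apply \zcref{thm:main} with $A = B^*_{i-1}$ and $B = B_i$, choosing $X \subseteq A \setminus B$ to contain $y_i$ precisely when $y_i \in B^*_{i-1}$ and $Y \subseteq B \setminus A$ to contain $x_i$ precisely when $x_i \notin B^*_{i-1}$, then setting $B^*_i \coloneqq B^*_{i-1} - U + V$ for the $U \supseteq X$, $V \supseteq Y$ returned by the theorem. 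A case analysis on the membership of $y_i, x_i$ in $B^*_{i-1}$ verifies $|B^*_i \setminus B_i| \le k - i$. The weight bound comes from the paired basis $B_i + U - V$ guaranteed by \zcref{thm:main}: via the cancellation $B_i + y_i - x_i = B_{i-1}$, it equals $B_{i-1} + (U \setminus \{y_i\}) - (V \setminus \{x_i\})$, which has distance $|U| - 1$ from $B_{i-1}$. Whenever this distance is at most $1$, the paired basis is a valid candidate in the $i$th iteration of the algorithm, and so $\err(B_i \given B_{i-1}) \ge w(B_i + U - V) - w(B_i)$. Combining this inequality with the linear-weight identity $w(B^*_{i-1}) + w(B_i) = w(B^*_i) + w(B_i + U - V)$ yields $w(B^*_i) \ge w(B^*_{i-1}) - \err(B_i \given B_{i-1})$, completing the inductive step.

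The hard part is the doubly misaligned case $y_i \in B^*_{i-1} \cap B_{i-1}$ and $x_i \notin B^*_{i-1} \cup B_{i-1}$, in which the algorithm's move pushes $B_i$ farther from $B^*_{i-1}$ and $|B^*_{i-1} \setminus B_i|$ can exceed $k - i$ by two, so no single-element exchange on $B^*_{i-1}$ suffices. Here the generalization afforded by \zcref{thm:main} beyond the plain symmetric and multiple exchange properties is essential: applying it with $X = \{y_i\}$ and $Y = \{x_i\}$, one has to secure a size-$2$ configuration $|U| = |V| = 2$ so that the paired basis $B_{i-1} + (U \setminus \{y_i\}) - (V \setminus \{x_i\})$ becomes a distance-$1$ candidate. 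Securing this configuration, possibly by enlarging $X$ with an auxiliary element $u \in B^*_{i-1} \setminus B_{i-1}$ to force $|U| \ge 2$ and exploiting the rank bound $|U| \le r(X + Y)$ together with the dependence structure of $\{y_i, u, x_i\}$ in the matroid, is precisely where the enlargement freedom $X \subseteq U$, $Y \subseteq V$ in \zcref{thm:main} plays its decisive role.
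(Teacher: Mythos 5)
Your overall architecture — maintaining a shadow optimum $B^*_i$ with the two invariants $|B^*_i \setminus B_i| \le k-i$ and $w(B^*_i) \ge w(B^*_0) - \sum_{j\le i}\err(B_j\given B_{j-1})$ — is essentially an unrolled form of the paper's \zcref{lem:robust}, and the case analysis on the memberships of $x_i, y_i$ in $B^*_{i-1}$ matches the paper's cases one-for-one. Three of your four cases go through: when $X$ or $Y$ is a singleton the rank bound in \zcref{thm:main} pins $|U|=|V|$ to $1$, and your paired-basis accounting is correct.

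The gap is in the doubly misaligned case ($y_i \in B^*_{i-1}$, $x_i \notin B^*_{i-1}$) when, moreover, $|B^*_{i-1}\setminus B_{i-1}| = k-i+1$ is tight. There you correctly note that you must land on $|U|=|V|=2$: with $|U|\le 1$ the distance invariant fails, and with $|U|\ge 3$ the paired basis $B_{i-1}+(U-y_i)-(V-x_i)$ has distance $|U|-1\ge 2$ from $B_{i-1}$ and no longer certifies $\err(B_i\given B_{i-1})$. But \zcref{thm:main} applied with $X=\set{y_i}$, $Y=\set{x_i}$ only guarantees $|U|\le r(\set{y_i,x_i})\le 2$; it can legitimately return $|U|=1$, meaning $B^*_{i-1}-y_i+x_i$ and $B_{i-1}$ are bases and nothing else. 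Your proposed repair — enlarge $X$ to $\set{y_i,u}$ and invoke the rank bound plus ``dependence structure'' of $\set{y_i,u,x_i}$ — does not close the gap: this triple is generically independent, so the rank bound becomes $|U|\le 3$ and the $|U|=3$ outcome is back on the table, with no mechanism to exclude it.

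The paper closes this sub-case with a genuinely different idea that is absent from your sketch: it does not try to force $|U|=2$ out of \zcref{thm:main}. Instead, from the $|U|=1$ outcome it takes $B^*_{i-1}-y_i+x_i$, performs a \emph{weight-aware} symmetric exchange with $B_i$ at the minimum-weight element $a^*$ of $B^*_{i-1}\setminus B_{i-1}$ to obtain a shadow at the correct distance $k-i$, and then compensates the weight loss $w(a^*)$ by a second symmetric exchange between $B^*_{i-1}$ and $B_{i-1}$ that produces a candidate in $\cB_1(B_{i-1})$ gaining $w(a)\ge w(a^*)$. This weight-aware pairing of two single-element exchanges, rather than a single two-element exchange from \zcref{thm:main}, is the missing ingredient; without it your induction does not go through.
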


While the robustness proof of the greedy algorithm in~\cite{oki2024no} depends solely on the symmetric exchange property, our proof of \zcref{thm:robustness} makes use of \zcref{thm:main} in the case $|X| = |Y| = 1$.

\medskip

The second main contribution of this paper is the development of a general framework for deriving extensions of the Grassmann--Plücker identity~\eqref{eq:multiple-gp} and thereby establishing exchange properties for representable matroids.
Our framework is described as follows.
For sets $A$ and $B$, let
\begin{align}
    \cX_{A,B}
    \coloneqq
    \Set{(X,Y)}{X\subseteq A,\ Y\subseteq B,\ |X|=|Y|}.
\end{align}
For a matrix $K \in \F^{r \times n}$ over a field $\F$ and $A,B \in \binom{[n]}{r}$, define the function $\mu^K_{A,B}\colon \cX_{A\setminus B, B\setminus A} \to \F$ by
\begin{align}\label{def:c-intro}
    \mu^K_{A,B}(X,Y) \coloneqq \pm \det K[A-X+Y] \det K[B+X-Y]
\end{align}
for $(X,Y) \in \cX_{A\setminus B,B\setminus A}$, where the sign in~\eqref{def:c-intro} is appropriately defined.
We exploit the \emph{harmonicity} of $\mu_{A,B}^K$, which states
\begin{align}\label{def:harmonic-mu}
    \sum_{a\in X} \mu_{A,B}^K(X-a,Y)
    =
    \sum_{b\in (B\setminus A)-Y} \mu_{A,B}^K(X,Y+b)
\end{align}
for any $X\subseteq A\setminus B$ and $Y\subseteq B\setminus A$ with $|X|=|Y|+1$.
The harmonicity of set functions and multilinear polynomials has been extensively studied in the representation theory of symmetric groups~\cite{ceccherini-silberstein2013representation}.
In particular, the harmonicity~\eqref{def:harmonic-mu} for $\mu_{A,B}^K$ is known as the \emph{Plücker relations} in projective and algebraic geometry~\cite{gelfand2008discriminants}.

We first use harmonicity to prove the following \emph{reconfiguration} version of \zcref{thm:multiple} for representable matroids under certain characteristic constraints, claiming that $(A-X+Y, B+X-Y)$ can be obtained from $(A,B)$ using symmetric exchanges.
Recall that the \emph{characteristic} $\chr(\F)$ of a field $\F$ is the smallest positive integer $k$ such that $k\cdot1=0$; if no such $k$ exists, then $\chr(\F)\coloneqq0$.

\begin{restatable}{theorem}{reconfmep} \label{thm:reconf-mep}
    Let $A$ and $B$ be bases of a matroid $\bM$ representable over a field $\F$ and let $X\subseteq A\setminus B$.
    If $\chr(\F) \not \in \set{2,3,\dots, |X|}$, then there exists $Y\subseteq B\setminus A$ such that $A-X+Y$ and $B+X-Y$ are bases of $\bM$ and the basis pair $(A-X+Y,B+X-Y)$ can be obtained from $(A,B)$ by a sequence of symmetric exchanges in $\bM$.
\end{restatable}

\zcref{thm:reconf-mep} verifies a relaxation of \zcref{conj:serial-mep} under a representability constraint in which the prescribed bound on the length of the sequence of symmetric exchanges is dropped.
Observe that \zcref{thm:reconf-mep} applies to all matroids representable over a field of characteristic zero. 
This class coincides with the class of complex-representable matroids~\cite[Proposition~6.8.11]{oxley2011matroid}, and it includes many important families, such as regular matroids and gammoids~\cite{mason1972on}, and thus their subclasses, including graphic and transversal matroids.

In the special case $X=A\setminus B$, we obtain for this matroid class the common relaxation of the conjectures of Gabow~\cite{gabow1976decomposing} and White~\cite{white1980unique}: the basis pair $(B,A)$ can be obtained from $(A,B)$ using a sequence of symmetric exchanges.
This relaxation already implies the following weighted variant of equitability.

\begin{restatable}{theorem}{weightedequitability} \label{thm:weighted-equitability} 
    Let $\bM$ be a matroid representable over a field $\F$ of characteristic zero.
    Assume that the ground set $E=E(\bM)$ can be partitioned into $k \ge 1$ disjoint bases.
    Let $w\colon E \to \R_{\ge 0}$ be a weight function. 
    %\begin{enumerate}[label=(\roman*)]
    %\item 
    (i) Then, there exists a partition $(B_1,\dots, B_k)$ of $E$ into $k$ bases such that \[w(B_1)\ge w(B_2) \ge  \dots \ge w(B_k) \ge w(B_1)-\max_{e \in B_1} w(e).\]
    (ii) Moreover, if $w\colon E \to \set{0,1,\dots, W}$ for some $W\in \Z_{\ge 0}$ and a matrix representing $\bM$ over $\F$ is given,
    then such a partition can be found by a randomized algorithm in expected time polynomial in $|E|$ and $W$, assuming unit-time field operations.
    %\end{enumerate}
\end{restatable}

Using a cut-and-choose argument as in \cite[Corollary~5.2]{equitability_journal}, \zcref{thm:weighted-equitability} settles the existence of matroid-constrained EF1 allocations for two agents whenever the matroid is representable over a field of characteristic zero and the valuation of at least one of the agents is additive.
As already noted in \cite[Remark~A.2]{equitability_journal}, our argument using a sequence of symmetric exchanges does not seem sufficient to show the existence of EF1 allocations for an arbitrary number of agents with identical additive valuations.

Motivated by the role of harmonicity in the proof of \zcref{thm:reconf-mep}, we develop this framework further to derive additional basis exchange properties.
The vector space of harmonic functions admits an explicit basis whose elements are indexed by non-crossing bijections, or equivalently, by non-crossing perfect bipartite matchings~\cite{filmus2016an,filmus2019harmonicity}.
Building on this fact, we use expansions of $\mu^K_{A,B}$ and of its restriction to the pairs obtained by symmetric exchanges from $(A, B)$ to prove the following result.

\begin{restatable}{theorem}{gpmain}\label{thm:gp-main}
    Let $\F$ be a field, $n,r \in \Zp$, $A,B \in \tbinom{[n]}{r}$, and $c\colon \cX_{A\setminus B, B\setminus A} \to \F$ be such that
    \begin{align}\label{eq:c}
        \sum_{X \subseteq A\setminus B} c(X, \varphi(X)) = 1
    \end{align}
    holds for every bijection $\varphi\colon A\setminus B \to B\setminus A$.
    Then, for any matrix $K \in \F^{r \times n}$, we have
    \begin{align}\label{eq:gp-general}
        \mu_{A,B}^K(\emptyset, \emptyset) = \sum_{(X,Y) \in \cX_{A\setminus B,B\setminus A}} c(X,Y) \mu_{A,B}^K(X,Y).
    \end{align}
\end{restatable}

For bases $A$ and $B$ of a representable matroid, \zcref{thm:gp-main} immediately implies the existence of $(U,V)$ with $c(U,V) \ne 0$ for which both $A-U+V$ and $B+U-V$ are bases, just as \zcref{thm:multiple} follows from~\eqref{eq:multiple-gp}.
When the ground field $\F$ has characteristic zero, we further show the reconfiguration statement analogous to that of \zcref{thm:reconf-mep}.
The resulting matroidal statement is formally stated as follows.
Here, for a function $c\colon \cX_{A\setminus B, B\setminus A} \to \F$, let $\supp(c) \coloneqq \Set{(X,Y) \in \cX_{A\setminus B, B\setminus A}}{c(X,Y) \ne 0}$.

\begin{restatable}{theorem}{gpmatroids}\label{thm:gp-matroids}
    Let $A$ and $B$ be bases of a matroid $\bM$ representable over a field $\F$.
    Let $c\colon \cX_{A\setminus B, B\setminus A} \to \F$ satisfy~\eqref{eq:c} for any bijection $\varphi\colon A\setminus B \to B\setminus A$.
    Then, there exists $(U,V) \in \supp(c)$ such that $A-U+V$ and $B+U-V$ are bases of $\bM$.
    Moreover, if $\F$ has characteristic zero, then $(U,V)$ can be chosen so that $(A-U+V, B+U-V)$ is obtained from $(A,B)$ using symmetric exchanges in $\bM$.
\end{restatable}

\begin{comment}
\new{The proof strategy of our framework is outlined as follows.
We exploit the \emph{harmonicity} of $\mu_{A,B}^K$, which states
\begin{align}\label{def:harmonic-mu}
    \sum_{a\in X} \mu_{A,B}^K(X-a,Y)
    =
    \sum_{b\in (B\setminus A)-Y} \mu_{A,B}^K(X,Y+b)
\end{align}
for any $X\subseteq A\setminus B$ and $Y\subseteq B\setminus A$ with $|X|=|Y|+1$.}
The harmonicity of set functions and multilinear polynomials has been extensively studied in the representation theory of symmetric groups~\cite{ceccherini-silberstein2013representation}.
In particular, the harmonicity condition~\eqref{def:harmonic-mu} for $\mu_{A,B}^K$ is known in the literature as the \emph{Plücker relations}.
The vector space of harmonic functions admits an explicit basis whose elements are indexed by non-crossing bijections, or equivalently, by non-crossing perfect bipartite matchings~\cite{filmus2016an,filmus2019harmonicity}.
\new{\zcref{thm:gp-main} is then built on the basis expansions of $\mu^K_{A,B}$ and its restriction to the pairs obtained by symmetric exchanges from $(A, B)$.}
\end{comment}

Our framework yields several consequences.
We first discuss how it recovers exchange properties which are known or proved in this paper, namely \zcref{thm:multiple,thm:equitability-exchange,thm:main}.
By giving explicit choices of the function $c$ with~\eqref{eq:c}, we show that \zcref{thm:gp-matroids} recovers \zcref{thm:equitability-exchange} for matroids representable over a field of characteristic zero, and \zcref{thm:multiple,thm:main} for representable matroids over fields of arbitrary characteristic, though in case of \zcref{thm:main}, with a weaker upper bound on the common size of $U$ and $V$.
When $\chr(\F)=0$, these results also yield their reconfiguration versions, including \zcref{thm:reconf-mep}.

Beyond these exchange properties, we further prove several exchange properties for representable matroids.
Our first highlight is the following.
For bases $A$ and $B$ of a matroid $\bM$ representable over a field $\F$, let $\cU_{A,B}^{\bM}(\F)$ denote the collection of pairs $(U,V)\in \cX_{A\setminus B,B\setminus A}$ such that both $A-U+V$ and $B+U-V$ are bases of $\bM$, and, in addition, if $\chr(\F)=0$, then $(A-U+V,B+U-V)$ is reachable from $(A,B)$ by a sequence of symmetric exchanges in $\bM$.

\begin{restatable}{theorem}{twomax}\label{thm:two-max}
    Let $A$ and $B$ be bases of a matroid $\bM$ representable over a field $\F$, $X\subseteq A\setminus B$ a subset, $(Y_1, Y_2)$ a bipartition of $B\setminus A$, and $p_1, p_2\in \Z_{\ge 0}$ with $p_t \le \max\set{|X|-|Y_t|, 0}$ for $t \in [2]$.
    If $p_1=p_2=0$ or $\F$ has characteristic zero, then $|X| \le m_1+m_2$, where
    \begin{align}\label{def:m-t}
        m_t \coloneqq \max\Set{|V \cap Y_t|}{(U,V) \in \cU_{A,B}^\bM(\F),\, U \subseteq X,\, |V\cap Y_{3-t}| = p_t}
    \end{align}
    for $t \in [2]$.
\end{restatable}

\zcref{thm:two-max} reveals a trade-off between two different exchange properties, ensuring that at least one of them admits large exchange sets.
This type of exchange guarantee appears to be new, even for representable matroids.

We show that \zcref{thm:two-max} implies statement \ref{it:subsets_with_ranks}  of the following theorem. Its statement \ref{it:supersets_with_ranks} is obtained by a change of variables, combining with the reachability from $(A,B)$ to $(B,A)$ implied by \zcref{thm:reconf-mep} for the $\chr(\F)=0$ case.
These exchange properties are illustrated in \zcref{fig:with_ranks}. 

\begin{figure}
    \centering
    \begin{subfigure}{0.5\linewidth}
        \centering
        \begin{tikzpicture}[
          font=\small,
          line width=0.7pt,
          line cap=round,
          line join=round,
          brace above/.style={decorate,decoration={brace,amplitude=5pt}},
          brace below/.style={decorate,decoration={brace,mirror,amplitude=5pt}},
          myarrow/.style={<->, >=Stealth, line width=0.7pt}
        ]
          \def\W{5}
          \def\H{0.35}
          \def\WX{3.5}
          \def\WYone{2.9}
          \def\WU{2}
          \def\WVone{1.5}
          \def\WVtwo{0.5}
          \def\Boriginy{-2}
        
          % A \ B
          \node[left=12pt] at (0, 0.5*\H) {$A\setminus B$};
          \draw (0,0) rectangle (\W, \H);
          \draw (\WX, 0) -- (\WX, \H);
          \draw[brace above] (0.05, \H+0.1) -- node[midway,above=6pt] {$X_1$} (\WX-0.05, \H+0.1);
          \draw[brace above] (\WX+0.05, \H+0.1) -- node[midway,above=6pt] {$X_2$} (\W-0.05, \H+0.1);
          \fill[gray!25] (0.07, 0.07) rectangle (0.07+\WU, \H-0.07);
        
          % B \ A
          \node[left=12pt] at (0, \Boriginy+0.5*\H) {$B\setminus A$};
          \draw (0, \Boriginy) rectangle (\W, \Boriginy+\H);
          \draw (\WYone, \Boriginy) -- (\WYone, \Boriginy+\H);
          
          \draw[brace below] (0.05, \Boriginy-0.10) -- node[midway,below=6pt] {$Y_1$} (\WYone-0.05, \Boriginy-0.10);
          \draw[brace below] (\WYone+0.05, \Boriginy-0.10) -- node[midway,below=6pt] {$Y_2$} (\W-0.05, \Boriginy-0.10);
          
          \fill[gray!25] (0.07, \Boriginy+0.07) rectangle (\WVone+0.07, \Boriginy+\H-0.07);
          \draw[myarrow] (0.07, \Boriginy+\H+0.15) -- (\WVone+0.07, \Boriginy+\H+0.15) node[midway, above, align=right] {$\ge\max\{|A\setminus B|-r(X_2+Y_2),\qquad\quad\;$\\$|X_1+Y_1|-r(X_1+Y_1)\}\qquad$};
          
          \fill[gray!25] (\W-0.07-\WVtwo, \Boriginy+0.07) rectangle (\W-0.07, \Boriginy+\H-0.07);
          \draw[myarrow] (\W-0.07-\WVtwo, \Boriginy+\H+0.15) -- (\W-0.07, \Boriginy+\H+0.15) node[midway, above] {$p$};
        \end{tikzpicture}
        \caption{\zcref{thm:with_ranks}~\ref{it:subsets_with_ranks}}
    \end{subfigure}%
    \begin{subfigure}{0.5\linewidth}
        \centering
        \begin{tikzpicture}[
          font=\small,
          line width=0.7pt,
          line cap=round,
          line join=round,
          brace above/.style={decorate,decoration={brace,amplitude=5pt}},
          brace below/.style={decorate,decoration={brace,mirror,amplitude=5pt}},
          myarrow/.style={<->, >=Stealth, line width=0.7pt}
        ]
          \def\W{5}
          \def\H{0.35}
          \def\WX{2.1}
          \def\WYone{1.5}
          \def\WUone{1.6}
          \def\WUtwo{1.4}
          \def\WVtwo{1.5}
          \def\Boriginy{-2}
        
          % A \ B
          \node[left=12pt] at (0, 0.5*\H) {$A\setminus B$};
          \draw (0,0) rectangle (\W, \H);
          \draw (\WX, 0) -- (\WX, \H);
          \draw[brace above] (0.05, \H+0.1) -- node[midway,above=6pt] {$X_1$} (\WX-0.05, \H+0.1);
          \draw[brace above] (\WX+0.05, \H+0.1) -- node[midway,above=6pt] {$X_2$} (\W-0.05, \H+0.1);
          \fill[gray!25] (0.07, 0.07) rectangle (0.07+\WUone, \H-0.07);
          \fill[gray!25] (\WX+0.07, 0.07) rectangle (\WX+\WUtwo-0.07, \H-0.07);
          \draw[myarrow] (\WUone, -\H+0.15) -- (\WX, -\H+0.15) node[midway, below] {$p$};
        
          % B \ A
          \node[left=12pt] at (0, \Boriginy+0.5*\H) {$B\setminus A$};
          \draw (0, \Boriginy) rectangle (\W, \Boriginy+\H);
          \draw (\WYone, \Boriginy) -- (\WYone, \Boriginy+\H);
          
          \draw[brace below] (0.05, \Boriginy-0.10) -- node[midway,below=6pt] {$Y_1$} (\WYone-0.05, \Boriginy-0.10);
          \draw[brace below] (\WYone+0.05, \Boriginy-0.10) -- node[midway,below=6pt] {$Y_2$} (\W-0.05, \Boriginy-0.10);
          
          \fill[gray!25] (0.07, \Boriginy+0.07) rectangle (\WYone-0.07, \Boriginy+\H-0.07);
          \fill[gray!25] (\WYone+0.07, \Boriginy+0.07) rectangle (\WYone+0.07+\WVtwo, \Boriginy+\H-0.07);
          \draw[myarrow] (0.07, \Boriginy+\H+0.15) -- (\WYone+0.07+\WVtwo, \Boriginy+\H+0.15) node[midway, above, align=right] {$\le\min\{r(X_1+Y_1),\qquad\qquad\qquad\qquad\qquad\quad\;\;$\\$r(X_2+Y_2)-|X_2+Y_2|+|A\setminus B|\} - p$};
        \end{tikzpicture}
        \caption{\zcref{thm:with_ranks}~\ref{it:supersets_with_ranks}}
    \end{subfigure}
    \caption{Illustration of \zcref{thm:with_ranks}. Gray areas in $A\setminus B$ and $B\setminus A$ indicate $U$ and $V$, respectively.}\label{fig:with_ranks}
\end{figure}

\begin{restatable}{theorem}{withranks}\label{thm:with_ranks}
    Let $A$ and $B$ be bases of a matroid $\bM$ representable over a field $\F$ with rank function $r$, and $(X_1, X_2)$ and $(Y_1, Y_2)$ be bipartitions of $A\setminus B$ and $B\setminus A$, respectively.
    Let $p$ be an integer with $0 \le p \le \max\set{|X_1|-|Y_1|, 0}$ such that $p = 0$ or $\F$ has characteristic zero.
    \begin{enumerate}[label={\upshape(\alph*)}]
        \item \label{it:subsets_with_ranks} There exists $(U,V) \in \cU_{A,B}^{\bM}(\F)$ such that $U \subseteq X_1$, $|Y_2 \cap V| = p$, and
        \[
            |U|=|V| \ge \max\set[\big]{|A\setminus B|-r(X_2+Y_2),\ |X_1+Y_1|-r(X_1+Y_1)} + p.
        \]
        \item \label{it:supersets_with_ranks} There exists $(U,V) \in \cU_{A,B}^{\bM}(\F)$ such that $Y_1 \subseteq V$, $|X_1 \setminus U| = p$, and
        \[
            |U| = |V| \le \min\set[\big]{r(X_1+Y_1),\ r(X_2+Y_2)-|X_2+Y_2|+|A\setminus B|} - p.
        \]
    \end{enumerate}
\end{restatable}

\zcref{thm:with_ranks} generalizes both \zcref{thm:equitability-exchange} for matroids representable over a field of characteristic zero and \zcref{thm:main} for matroids representable over a field of any characteristic, together with their reconfiguration versions in the characteristic-zero case.
Indeed, if $|X_1| > |Y_1|$, then statement~\ref{it:subsets_with_ranks} for $p=1$ specializes to \zcref{thm:equitability-exchange} applied to $X \coloneqq X_1$ and the bipartition $(Y_1, Y_2)$ of $B \setminus A$, with an additional lower bound on the size of the exchange sets.
Statement~\ref{it:supersets_with_ranks} for $p=0$ gives \zcref{thm:main} applied to $X \coloneqq X_1$ and $Y \coloneqq Y_1$, where the improved upper bound on the sizes is the same as applying \zcref{thm:main} both to the matroid and its dual.

On the algorithmic side, via a reduction to linear matroid intersection with multiple exact weight constraints~\cite{camerini1992random,horsch2024problems}, we provide a randomized algorithm that finds, in expected polynomial time, exchangeable pairs $(U,V)$ witnessing the inequality of \zcref{thm:two-max}, and thus also the exchange properties of \zcref{thm:with_ranks}, except for their reconfiguration versions; designing algorithms for the reconfiguration versions is left open.

\begin{restatable}{theorem}{twomaxalgo}\label{thm:two-max-algo}
    Let $\bM$ be a matroid representable over a field $\F$.
    There is a randomized algorithm that, given a matrix $K \in \F^{r \times n}$ representing $\bM$, bases $A$ and $B$ of $\bM$, a subset $X \subseteq A\setminus B$, a bipartition $(Y_1, Y_2)$ of $B\setminus A$, and $p_1, p_2 \in \Z_{\ge 0}$ with $p_t \le \max\set{|X|-|Y_t|, 0}$ for $t \in [2]$ such that $p_1 = p_2 = 0$ or $\F$ has characteristic zero, outputs $(U_1, V_1), (U_2, V_2) \in \cX_{X, B\setminus A}$ such that $|V_t \cap Y_{3-t}| = p_t$ for $t \in [2]$, $|X| \le |V_1 \cap Y_1| + |V_2 \cap Y_2|$, and $A-U_t+V_t$ and $B+U_t-V_t$ are bases for $t \in [2]$ in polynomial time in expectation, assuming unit-time field operations.
\end{restatable}

Seeking more general exchange properties, using our framework, we further establish a generalization (\zcref{thm:gp-partition}) of \zcref{thm:two-max}, which allows partitioning $A\setminus B$ and $B\setminus A$ into a general fixed number of parts.
This general exchange property still admits a randomized polynomial-time algorithm and generates another common extension of \zcref{thm:main,thm:equitability-exchange} of the characteristic zero case (\zcref{thm:blue}).
Although these exchange properties do not yet yield direct applications, they unify several previously separate results under a common perspective and suggest unified conjectures for general matroids.

As another application of our framework, we also show a ``uniqueness version'' of \zcref{thm:main} for the characteristic zero case from the identity~\eqref{eq:gp-general} corresponding to \zcref{thm:main} with a weaker upper bound on the common size of $U$ and $V$ (\zcref{thm:main-unique-czero}).
This theorem asserts that $A$ and $B$ are bases if there exists a unique exchangeable pair $(U,V)$ with the conditions in \zcref{thm:main} under the weaker upper bound.
This relation between an exchange property and its uniqueness version is similar to that of the so-called perfect-matching and unique-matching lemmas~\cite{murota2010matrices} used in matroid intersection algorithms; see \zcref{sec:uniqueness} for further background of uniqueness versions.
We further observe that extensions of the Grassmann--Plücker identity generated by our framework yield exchange properties for representable valuated matroids, which are a quantitative generalization of representable matroids.

\begin{comment}
Furthermore, as a consequence of the identity~\eqref{eq:general-gp-intro} corresponding to \zcref{thm:two-max} for $p=0$, we provide the following uniqueness version of \zcref{thm:main} for matroids representable over a field of characteristic zero.

\begin{restatable}{theorem}{mainuniquezero}\label{thm:main-unique-czero} 
    Let $\bM$ be a matroid representable over a field of characteristic zero, with ground set $E$ and rank function $r$.
    Let $A,B \in \binom{E}{r(E)}$ and $X \subseteq A \setminus B$ and $Y \subseteq B \setminus A$ be subsets such that $r(X+Y) < |A \setminus B|$. \snote{Needs explanation of $A\cap B\ne \emptyset$ case in the proof.}
    If there uniquely exist $U \subseteq A \setminus B$ and $V\subseteq B \setminus A$ such that $X \subseteq U$, $Y \subseteq V$, $|U| = |V| \le r(X+Y)$, and $A-U+V$ and $B+U-V$ are bases, then $A$ and $B$ are bases as well.
\end{restatable}
Somewhat interestingly, we also observe that \zcref{thm:main-unique-czero} fails for binary matroids.
This is the only statement derived from our framework that we know to hold for matroids representable over fields of characteristic zero, but to fail for other classes of matroids.

\end{comment}

\subsection{Organization}
The rest of this paper is organized as follows.
\zcref{sec:preliminaries} provides preliminaries on matroids.
\zcref{sec:main} proves \zcref{thm:main} and derives consequences for existing exchange properties.
\zcref{sec:robustness} establishes the robustness property of local search based on \zcref{thm:main}.
\zcref{sec:harmonic} introduces harmonic functions and proves \zcref{thm:reconf-mep} and its consequences.
\zcref{sec:gp} develops the Grassmann--Plücker framework further and applies it to obtain additional exchange properties.
Finally, \zcref{sec:conclusion} concludes with open problems.

\section{Preliminaries}\label{sec:preliminaries}

\subsection{Notations}

Let $\N, \Z_{\ge 0}, \Z, \Q, \R$ and $\R_{\ge0}$ be the sets of natural numbers, nonnegative integers, integers, rationals, reals, and nonnegative reals, respectively.
For $n \in \Z_{\ge 0}$, let $[n] \coloneqq \set{1, \dotsc, n}$.
For a set $E$ and $k \in \Z$, let $\binom{E}{k} \coloneqq \Set{S \subseteq E}{|S|=k}$.
For $S \subseteq E$, let $\ones_S\colon E \to \set{0,1}$ denote the characteristic function of $S$ defined by $\ones_S(e) = 1$ if $e \in S$ and $0$ otherwise.
For $w\colon E \to \R$, let $w(S) \coloneqq \sum_{e \in S} w(e)$.
For a function $f\colon E \to \F$ from a set $E$ to a field $\F$, let $\supp(f) \coloneqq \Set{e \in E}{f(e) \ne 0}$.

\subsection{Matroids}
We collect basic notions of matroids used in this paper.
For further background on matroids, we refer readers to Oxley~\cite{oxley2011matroid}.

Let $\bM$ be a matroid with ground set $E \coloneqq E(\bM)$ and basis family $\cB \coloneqq \cB(\bM)$.
The \emph{rank function} $r\colon 2^E \to \Z_{\ge 0}$ of $\bM$ is defined as $r(S) \coloneqq \max\Set{|S \cap B|}{B \in \cB}$ for $S \subseteq E$.
The rank function is submodular; that is, $r(S)+r(T) \ge r(S \cap T) + r(S \cup T)$ holds for $S, T \subseteq E$.
The \emph{rank} of $\bM$ refers to $r(E)$.
A set $S \subseteq E$ is called \emph{independent} if $r(S) = |S|$, i.e., $S \subseteq B$ for some $B \in \cB$, and \emph{dependent} otherwise.
Likewise, we call a set $S \subseteq E$ \emph{spanning} if $r(S) = r(E)$, i.e., $B \subseteq S$ for some $B \in \cB$, and \emph{non-spanning}
otherwise.

The \emph{dual} of a matroid $\bM$ is a matroid $\bM^*$ with the same ground set $E \coloneqq E(\bM^*) = E(\bM)$ and the basis family $\cB(\bM^*) = \Set{E-B}{B \in \cB(\bM)}$.
If $r$ is the rank function of $\bM$, the rank function $r^*$ of $\bM^*$ is given as $r^*(S) = r(E-S)+|S|-r(E)$ for $S \subseteq E$.
For subsets $F_1, F_2\subseteq E$ with $F_1\cap F_2=\emptyset$ and $F_1\cup F_2\ne E$, the matroid $\bM/F_1\backslash F_2$  obtained from $\bM$ by \emph{contracting} $F_1$ and \emph{deleting} $F_2$ is the matroid on the ground set $E\setminus (F_1\cup F_2)$ with the rank function $r_{\bM/F_1\backslash F_2}(S)=r(S\cup F_1)-r(F_1)$ for $S\subseteq E\setminus (F_1\cup F_2)$.
Contraction and deletion are dual operations of each other; that is, ${(\bM/F_1\backslash F_2)}^* = \bM^* / F_2 \backslash F_1$ holds.

A matroid $\bM$ is said to be \emph{representable} over a field $\F$ if there exists a matrix $K \in \F^{r \times n}$ such that $\cB(\bM) = \Set{B \in \binom{[n]}{r}}{ \det K[B] \ne 0}$, where $E(\bM)$ is identified with $[n]$.
The representability of a matroid is preserved under taking the dual, contraction, and deletion.
Given a matrix representing $\bM$, we can compute representations of the dual, contraction, and deletion in polynomial time using Gaussian elimination.

The following theorem essentially states that the greedy algorithm works for linear optimization over matroid bases.

\begin{theorem}[{see~\cite[Theorem~40.2]{schrijver2003combinatorial}}]\label{thm:greedy}
    Let $\bM$ be a matroid with ground set $E = E(\bM)$ and rank function $r$.
    Let $c\colon E \to \R$ be a weight function, $e_1, \dots, e_n$ any ordering of $E$ with $n \coloneqq |E|$ such that $c(e_1) \le \dots \le c(e_n)$, and $S_i \coloneqq \set{e_1,\dots, e_i}$ for $0 \le i \le n$. Then,
    \[
        \min_{B \in \cB(\bM)} c(B) = \sum_{i=1}^n (r(S_i)-r(S_{i-1})) c(e_i).
    \]
\end{theorem}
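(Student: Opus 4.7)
The plan is to prove the equality by showing both $\min_{B \in \cB(\bM)} c(B) \le \sum_{i=1}^n (r(S_i)-r(S_{i-1})) c(e_i)$ by exhibiting a basis achieving the right-hand side, and the reverse inequality for every basis $B$. Two structural facts drive the argument: submodularity of $r$ forces $r(S_i)-r(S_{i-1}) \in \set{0,1}$ for each $i$, and every independent set $I$ satisfies $|I \cap S_i| \le r(S_i)$ because $I \cap S_i$ is independent in the restriction to $S_i$.

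For the upper bound, I would construct the greedy basis $B^\star \coloneqq \Set{e_i}{r(S_i) > r(S_{i-1})}$. An inductive argument using $r(B^\star \cap S_i) = r(S_i)$ at every step shows that $B^\star \cap S_i$ is independent for all $i$; since $|B^\star| = r(S_n) = r(E)$, the set $B^\star$ is a basis. By construction, $c(B^\star) = \sum_{i=1}^n (r(S_i)-r(S_{i-1})) c(e_i)$, giving the desired upper bound on $\min_{B \in \cB(\bM)} c(B)$.

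For the matching lower bound, fix an arbitrary basis $B \in \cB(\bM)$ and set $\alpha_i \coloneqq |B \cap S_i|$, $\rho_i \coloneqq r(S_i)$. Then $\alpha_i \le \rho_i$ by independence of $B \cap S_i$, with equality at $i = 0$ (both zero) and $i = n$ (both equal $r(E) = |B|$). Writing $\ones_B(e_i) = \alpha_i - \alpha_{i-1}$ and applying summation by parts yields
\[
    c(B) - \sum_{i=1}^n (\rho_i - \rho_{i-1}) c(e_i) = \sum_{i=1}^n c(e_i)\bigl((\alpha_i - \rho_i) - (\alpha_{i-1} - \rho_{i-1})\bigr) = -\sum_{i=1}^{n-1} (c(e_{i+1}) - c(e_i))(\alpha_i - \rho_i),
\]
where the boundary contributions vanish because $\alpha_0 = \rho_0$ and $\alpha_n = \rho_n$. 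Each factor $c(e_{i+1}) - c(e_i)$ is nonnegative by the ordering of the weights, and each factor $\alpha_i - \rho_i$ is nonpositive by independence, so the right-hand side is nonnegative and we obtain $c(B) \ge \sum_{i=1}^n (\rho_i - \rho_{i-1}) c(e_i)$ for every basis $B$.

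This is a textbook greedy/LP-duality statement, so no step constitutes a genuine obstacle; the only point that requires care is the telescoping manipulation that converts the pointwise inequalities $\alpha_i \le \rho_i$, together with the boundary equalities, into the single sign statement on the right-hand side of the displayed identity.
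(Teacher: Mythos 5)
The paper does not prove \zcref{thm:greedy}; it cites it as Theorem~40.2 of Schrijver's book and uses it as a black box, so there is no in-paper argument to compare against. Your proposal is a correct, self-contained proof and follows the standard route: the unit-step property $r(S_i)-r(S_{i-1})\in\set{0,1}$, construction of the greedy basis $B^\star=\Set{e_i}{r(S_i)>r(S_{i-1})}$ with the invariant $|B^\star\cap S_i|=r(S_i)$ and $B^\star\cap S_i$ independent, and Abel summation against the slack $\alpha_i-\rho_i\le 0$ with vanishing boundary terms $\alpha_0=\rho_0=0$ and $\alpha_n=\rho_n=r(E)$. The summation-by-parts manipulation is carried out correctly, the sign analysis $(c(e_{i+1})-c(e_i))\ge 0$, $(\alpha_i-\rho_i)\le 0$ is sound, and the induction for the upper bound is fine (the step where $r(S_i)=r(S_{i-1})+1$ uses that $B^\star\cap S_{i-1}$ spans $S_{i-1}$, hence $r((B^\star\cap S_{i-1})+e_i)=r(S_i)$, which is a standard closure fact you could state explicitly but which is certainly believable at the level of detail of a proposal). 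No gap.
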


The following is a min-max relation for weighted matroid intersection, called the \emph{weight splitting theorem}.

\begin{theorem}[Frank~\cite{frank1981weighted}]\label{thm:weight-splitting}
    Let $\bM_1$ and $\bM_2$ be matroids on the common ground set $E \coloneqq E(\bM_1) = E(\bM_2)$ and let $w\colon E \to \Z$ be an integral weight function. 
    Then,
    \begin{align}
        \min_{B \in \cB(\bM_1) \cap \cB(\bM_2)} w(B) = \max\Set*{\min_{B_1 \in \cB(\bM_1)} c(B_1) + \min_{B_2 \in \cB(\bM_2)} d(B_2)}{c, d\colon E \to \Z, w=c+d}.
    \end{align}
\end{theorem}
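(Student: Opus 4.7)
The plan is to prove both directions of the min-max identity. The $\le$ direction is essentially tautological: for any common basis $B \in \cB(\bM_1) \cap \cB(\bM_2)$ and any integer splitting $w = c + d$, one has $w(B) = c(B) + d(B) \ge \min_{B_1 \in \cB(\bM_1)} c(B_1) + \min_{B_2 \in \cB(\bM_2)} d(B_2)$; taking the minimum on the left and the supremum over splittings on the right yields the inequality.

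For the $\ge$ direction I would pass through polyhedral combinatorics. The base polytope $P_i := \mathrm{conv}\Set{\ones_B}{B \in \cB(\bM_i)}$ admits the description $\Set{x \in \R^E_{\ge 0}}{x(S) \le r_i(S) \text{ for } S \subseteq E,\ x(E) = r_i(E)}$, and by Edmonds' matroid intersection theorem $P_1 \cap P_2$ is integral, with vertices exactly the characteristic vectors of common bases; hence $\min_{B \in \cB(\bM_1) \cap \cB(\bM_2)} w(B)$ equals the LP value $\min\Set{w^{\top} x}{x \in P_1 \cap P_2}$. Taking the LP dual introduces nonnegative variables $y^{(i)}_S$ for $S \subsetneq E$ and free multipliers $\mu_i$ for the equalities $x(E) = r_i(E)$, subject to $\mu_1 + \mu_2 + \sum_{S \ni e}(y^{(1)}_S + y^{(2)}_S) \le w(e)$ for each $e \in E$. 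Defining $c(e) := \mu_1 + \sum_{S \ni e} y^{(1)}_S$ and $d(e) := w(e) - c(e)$, I would apply \zcref{thm:greedy} to an ordering of $E$ refining the (laminar) support of $y^{(i)}$ to identify $\mu_i r_i(E) + \sum_S y^{(i)}_S r_i(S)$ with $\min_{B_i \in \cB(\bM_i)} c_i(B_i)$, where $c_1 = c$ and $c_2 = d$, so the dual objective matches the right-hand side of the theorem.

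The main obstacle is dual integrality: the primal polytope being integral does not in itself force $c, d$ to be integer-valued. To ensure this I would invoke the total dual integrality of the matroid intersection inequality system. The standard proof is an uncrossing argument leveraging the submodularity of each $r_i$: if $y^{(i)}_S, y^{(i)}_T > 0$ for two crossing sets $S, T$, replace them by equal positive weights on $S \cap T$ and $S \cup T$, which preserves dual feasibility and the dual value by virtue of $r_i(S) + r_i(T) \ge r_i(S \cap T) + r_i(S \cup T)$, while strictly reducing a potential such as $\sum_S y^{(i)}_S |S|(|E|-|S|)$. After finitely many such steps the support of each $y^{(i)}$ is laminar, at which point the corresponding constraint matrix is network-like and an integer optimal dual solution exists. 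This TDI step is the technically delicate part of the argument; the rest is essentially LP duality packaged with the greedy formula of \zcref{thm:greedy}.
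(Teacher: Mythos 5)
The paper does not prove this result; it is cited directly to Frank~\cite{frank1981weighted}, so there is no internal proof to compare against. Your route via LP duality and total dual integrality of the matroid intersection system is a legitimate, standard alternative to Frank's original primal--dual combinatorial argument, and the easy $\le$ direction and the overall strategy are fine.

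That said, the sketch has a sign slip and a gap in the final step. With nonnegative multipliers $y^{(i)}_S$ for the inequalities $x(S) \le r_i(S)$ in a minimization primal, the dual constraint should be $\mu_1 + \mu_2 - \sum_{S\ni e}\bigl(y^{(1)}_S + y^{(2)}_S\bigr) \le w(e)$ and the dual objective $\sum_i\bigl(\mu_i r_i(E) - \sum_S y^{(i)}_S r_i(S)\bigr)$; with the plus signs you wrote, the greedy identification fails and the submodularity inequality pushes the objective the wrong way under uncrossing. More substantively, once you set $c(e) := \mu_1 - \sum_{S\ni e} y^{(1)}_S$ and $d := w - c$, you cannot apply the greedy formula to $d$ directly: the dual constraint gives only $d(e) \ge \tilde d(e) := \mu_2 - \sum_{S\ni e} y^{(2)}_S$, with equality merely at tight elements (via complementary slackness, on the optimal common basis), so the greedy formula on a chain support identifies $\mu_2 r_2(E) - \sum_S y^{(2)}_S r_2(S)$ with $\min_{B_2}\tilde d(B_2)$, not with $\min_{B_2} d(B_2)$. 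The repair is to observe $\min_{B_1} c(B_1) + \min_{B_2} d(B_2) \ge \min_{B_1} c(B_1) + \min_{B_2} \tilde d(B_2) = \text{dual optimum} = \text{primal optimum}$ and then sandwich with the trivial $\le$ direction to force equality. Two smaller points: uncrossing pairs with $S \cup T = E$ must be absorbed into $\mu_i$, and for the greedy identification you need the support of each $y^{(i)}$ to be a \emph{chain}, not merely laminar, which is what uncrossing actually yields once that case is handled.
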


\subsection{Randomized Algorithm for Exact-weight Common Basis of Representable Matroids}

For a nonnegative integral weight function $w\colon E \to \Zp$ and a set of feasible weights $\cT\subseteq\Zp$, an \emph{exact weight constraint} refers to the condition that a set $S\subseteq E$ satisfies $w(S) \in \cT$.
If two representable matroids $\bM_1$ and $\bM_2$ having a common ground set $E$ are given as explicit matrix representations, it is possible to find their common basis $B$ subject to a fixed number of exact weight constraints in randomized pseudo-polynomial time~\cite{camerini1992random,horsch2024problems}.
We explain it through the formulation as \emph{group-labeled matroid intersection}.

Let $\psi\colon E \to \Gamma$ be a labeling from $E$ to an abelian group $\Gamma$ and $\mathcal{T} \subseteq \Gamma$.
The group-labeled matroid intersection problem asks for a common basis $B$ such that $\psi(B) \coloneqq \sum_{e \in B} \psi(e) \in \mathcal{T}$.
If the input matroids are representable and given by explicit matrix representations, the problem admits a polynomial-time \emph{Monte Carlo} algorithm via an algebraic technique: the algorithm runs in polynomial time in the worst case, and the error is one-sided.

\begin{theorem}[{\cite{horsch2024problems}}]\label{thm:group-label-algo}
    Let $\bM_1$ and $\bM_2$ be matroids representable over a field $\F$ with a common ground set $E$.
    There is a randomized algorithm that, given matrices $K_1$ and $K_2$ representing $\bM_1$ and $\bM_2$, respectively, a labeling $\psi \colon E \to \Gamma$ into an abelian group $\Gamma$, and a subset $\cT \subseteq \Gamma$, returns a common basis $B$ satisfying $\psi(B) \in \cT$ with probability at least $\frac12$ if such a common basis exists.
    The running time is polynomial in $|E|$ and $|\Gamma|$, assuming unit-time operations in $\F$ and $\Gamma$.\footnote{\cite{horsch2024problems} claims a stronger, expected polynomial-time \emph{Las Vegas} guarantee, i.e., the output is always correct while the running time is polynomial only in expectation.
    This would require a polynomially-sized certificate for NO instances computable in expected polynomial time, which is not known for matroid intersection with group-label constraints; thus, the claim should be weakened to a Monte Carlo guarantee.
    %In our applications in \zcref{sec:rand-algo}, NO instances do not occur, and a Las Vegas algorithm is obtained.%
    }
\end{theorem}

\begin{proposition}[{see~\cite{camerini1992random,horsch2024problems}}]\label{prop:exact}
    Let $\bM_1$ and $\bM_2$ be matroids of rank $r$ representable over a field $\F$ with a common ground set $E$, and let $k \in \Zp$.
    There is a randomized algorithm that, given matrices $K_1$ and $K_2$ representing $\bM_1$ and $\bM_2$, respectively, weight functions $w_i \colon E \to \set{0,1,\dotsc,W}$ $(i\in [k])$ for some $W \in \Zp$, and feasible weight sets $\cT \subseteq \set{0,1,\dotsc,rW}^k$, returns a common basis $B$ satisfying $(w_1(B), \dotsc, w_k(B)) \in \cT$ with probability at least $\frac12$ if such a common basis exists.
    The running time is polynomial in $|E|$ and ${(rW+1)}^k$, assuming unit-time operations in $\F$.
\end{proposition}

\begin{proof}
    Set $m \coloneqq rW+1$, and let $\Gamma \coloneqq (\Z/m\Z)^k$.
    Let $\pi \colon \Z^k \to \Gamma$ be the map defined by $\pi(a_1, \dotsc, a_k) \coloneqq (a_1 \bmod m, \dotsc, a_k \bmod m)$ for $(a_1, \dotsc, a_k) \in \Z^k$, which is a group homomorphism whose restriction to $\set{0,1,\dotsc,rW}^k$ is a bijection onto $\Gamma$.
    Define $\psi\colon E\to\Gamma$ by $\psi(e) \coloneqq \pi(w_1(e), \dotsc,w_k(e))$, and let $\cT' \subseteq \Gamma$ be the image of $\cT$ under $\pi$.
    For every common basis $B$, we have $\psi(B) = \pi(w_1(B), \dotsc, w_k(B))$ since $\pi$ is a homomorphism, and $0\le w_i(B)\le rW$ for $i\in[k]$ since $|B| = r$; hence, as $\pi$ is injective on $\set{0,1,\dotsc,rW}^k \supseteq \cT$, the condition $\psi(B) \in \cT'$ is equivalent to $(w_1(B), \dotsc, w_k(B)) \in \cT$.
    Thus, the claim follows by applying \zcref{thm:group-label-algo} with $\Gamma$, $\psi$, and $\cT'$, whose running time is polynomial in $|E|$ and $|\Gamma| = {(rW+1)}^k$.
\end{proof}

\section{Generalizing the Multiple Exchange Property via Weighted Matroid Intersection}\label{sec:main}

\subsection{Proof of \texorpdfstring{\zcref{thm:main}}{Theorem~\ref{thm:main}}}\label{sec:proof-main}

This section is devoted to showing \zcref{thm:main}.
We restate it here for readability.

\main*

\begin{comment}
\subsection{Warm-up: The Case $|X| = |Y| = 1$}

We first observe that the case $|X|=|Y|=1$ of \zcref{thm:main} follows from the symmetric exchange property.
This special case, which is already new, has an application to the robustness analysis of the local-search algorithm.

\begin{theorem}\label{thm:2exchange2}
    Let $A$ and $B$ be bases of a matroid $\bM$. For any $x \in A\setminus B$ and $y \in B\setminus A$, there exist $a \in A\setminus B$ and $b \in B\setminus A$ such that $A-\set{x,a}+\set{y,b}$ and $B+\set{x,a}-\set{y,b}$ are bases.
\end{theorem}
\begin{proof}
    By the symmetric exchange property, there exists $b \in B\setminus A$ such that $A'\coloneqq A-x+b$ and $B'\coloneqq B+x-b$ are bases.
    We are done if $b=y$ by choosing $a=x$.
    Otherwise, $y \in B'\setminus A'$; thus, by the symmetric exchange property, there exists $a \in A'\setminus B' = (A\setminus B)-x+b$ such that $B'-y+a$ and $A'+y-a$ are bases.
    If $a=b$, then $A'+y-a = A-x+y$ and $B'-y+a = B+x-y$ are bases, and we are done.
    Otherwise, $a \in (A\setminus B)-x$ and $A'+y-a = A-\set{x,a}+\set{y,b}$ and $B'-y+a = B+\set{x,a}-\set{y,b}$ are bases, finishing the proof.
\end{proof}

For the general case, we do not expect that \zcref{thm:main} follows from the symmetric or multiple exchange properties (\zcref{thm:multiple}). \onote{Add reason why not}
In \zcref{sec:proof-main} below, we provide a proof relying on the weight splitting theorem of weighted matroid intersection.
\end{comment}

\begin{proof}    
    Let $E \coloneqq E(\bM)$ denote the ground set of $\bM$.
    By contracting $A\cap B$ and deleting $E-(A\cup B)$, we may assume that $A\cap B = \emptyset$ and $A\cup B = E$.
    Using $A' \coloneqq A-U+(V-Y)$ and $B' \coloneqq B+(U-X)-V$, the statement of the theorem is equivalent to the existence of a bipartition $(A', B')$ of $E' \coloneqq E-(X+Y)$ such that $B'+X$ and $A'+Y$ are bases and $|B' \cap A| \le r(X+Y)-|X|$.
    Here, $B'+X$ being a basis of $\bM$ is equivalent to $B'$ being a basis of $\bM/X\backslash Y$, while $A'+Y = (E'-B')+Y$ being a basis of $\bM$ is equivalent to $B'$ being a basis of ${(\bM/Y\backslash X)}^* = \bM^*/X\backslash Y$. 
    Therefore, the statement of \zcref{thm:main}~(i) is equivalent to
    \begin{align}\label{eq:goal}
        \opt \coloneqq \min\Set{\ones_{A-X}(B')}{B' \in \cB(\bM/X\backslash Y) \cap \cB(\bM^*/X\backslash Y)} \le r(X+Y)-|X|.
    \end{align}
    Once~\eqref{eq:goal} is proved, \zcref{thm:main}~(ii) also follows since a minimum-weight common basis $B'$ obtained by any weighted matroid intersection algorithm gives the desired pair $(U,V)$ as $U = ((A\setminus B) \cap B')+X$ and $V = (B\setminus A) \setminus B'$.

    In what follows, we show~\eqref{eq:goal}.
    By \zcref{thm:weight-splitting}, there exist weight functions $c, d \colon E' \to \Z$ such that $\ones_{A-X} = c+d$ and
    \begin{align} \label{eq:weight_splitting}
        \opt = \min_{A' \in \cB(\bM/X\backslash Y)} c(A') + \min_{B' \in \cB(\bM^*/X\backslash Y)} d(B').
    \end{align}
    We may assume that $c\ge 0$ by considering $c-\min_{e \in E'} c(e)\cdot  \ones_{E'}$ and $d+\min_{e \in E'} c(e) \cdot \ones_{E'}$.
    Let $c' \coloneqq c -\frac12 \cdot \ones_{A-X}$ and let $e_1,\dots, e_m$ be an ordering of $E'$ such that $c'(e_1) \le \dots \le c'(e_m)$, where $m \coloneqq |E'|$.
    Since $c$ is integer-valued, this ordering satisfies $c(e_1) \le \dots \le c(e_m)$ and $(c-\ones_{A-X})(e_1) \le \dots \le (c-\ones_{A-X})(e_m)$.
    The latter is equivalent to $d(e_1) \ge \dots \ge d(e_m)$.
    Define $S_i \coloneq \set{e_1,\dots, e_i}$ for $0 \le i \le m$ and $T_j \coloneqq \set{e_m, e_{m-1}, \dots, e_{m-j+1}} = E'-S_{m-j}$ for $0 \le j \le m$. 
    Then, using \eqref{eq:weight_splitting} and \zcref{thm:greedy}, we get
    \begin{align} \label{eq:after_greedy}
        \opt & = \sum_{i=1}^m (r_{\bM/X\backslash Y}(S_i)-r_{\bM/X\backslash Y}(S_{i-1})) \cdot c(e_i)
        + \sum_{j=1}^m (r_{\bM^*/X\backslash Y}(T_j)-r_{\bM^*/X\backslash Y}(T_{j-1})) \cdot d(e_{m+1-j}),
    \end{align}
    where $r_{\bM/X\backslash Y}$ and $r_{\bM^*/X\backslash Y}$ denote the rank functions of $\bM/X\backslash Y$ and $\bM^*/X\backslash Y$, respectively.
    We have $r_{\bM/X\backslash Y}(S_i) = r(S_i+X)-|X|$ for $0 \le i \le m$
    and $r_{\bM^*/X\backslash Y}(T_j) = r_{\bM^*}(T_j +X)-|X| = r(E-(T_j + X))+|T_j + X| -r(E)-|X| = r(S_{m-j} + Y)+|T_j|-r(E)$ for $0 \le j \le m$, thus \eqref{eq:after_greedy} yields
    \begin{align}
        \opt & = \sum_{i=1}^m (r(S_i + X)-r(S_{i-1}+X)) \cdot c(e_i) \\
        & \quad + \sum_{j=1}^m (r(S_{m-j}+Y)-r(S_{m-j+1}+Y)+1)\cdot  d(e_{m+1-j}) \\
        & = \sum_{i=1}^m (r(S_i + X)-r(S_{i-1}+X)) \cdot c(e_i) + \sum_{i=1}^m (r(S_{i-1}+Y)-r(S_i+Y)+1)\cdot d(e_i). \label{eq:with_c_and_d}
    \end{align}
    Let $C\coloneqq \max\set{1, \max_{e \in E'} c(e)}$ and recall that we assumed that $c \ge 0$.
    For $t \in \Z$, define
    $E_t \coloneqq \Set{e \in E'}{c(e)=t}$,  $E_{\le t} \coloneqq \Set{e \in E'}{c(e)\le t}$, $A_t \coloneqq A\cap E_t$, $A_{\le t}\coloneqq A\cap E_{\le t}$, $B_t \coloneqq B \cap E_t$, and $B_{\le t} \coloneqq B\cap E_{\le t}$.
    Observe that
    \begin{align}
        \sum_{i=1}^m (r(S_i+X)-r(S_{i-1}+X))\cdot c(e_i) & = \sum_{t=0}^C (r(E_{\le t}+X)-r(E_{\le t-1}+X))\cdot t \\
        & = \sum_{t=0}^C r(E_{\le t}+X) \cdot t - \sum_{t=-1}^{C-1} r(E_{\le t}+X)\cdot (t+1) \\
        &= r(E_{\le C} + X) \cdot C - \sum_{t=0}^{C-1} r(E_{\le t}+X) \\
        & = r(E)\cdot C - \sum_{t=0}^{C-1} r(E_{\le t}+X). \label{eq:sumX}
    \end{align}
    As $d = \ones_{A-X}-c$, we have
    \begin{align}
        \sum_{i=1}^m (r(S_{i-1}+Y)-r(S_i+Y))\cdot d(e_i) & =\sum_{i=1}^m (r(S_i+Y)-r(S_{i-1}+Y))\cdot c(e_i) \\ 
        & {}+ \sum_{i \in [m] :\: e_i \in A-X} (r(S_{i-1}+Y)-r(S_i+Y)). \label{eq:sumd}
    \end{align}
    Here, similarly to \eqref{eq:sumX} we have
    \begin{align}
        \sum_{i=1}^m (r(S_i+Y)-r(S_{i-1}+Y))\cdot c(e_i) = r(E)\cdot C - \sum_{t=0}^{C-1} r(E_{\le t} + Y). \label{eq:sumY}
    \end{align}
    Since $(c-\ones_{A-X})(e_1)\le \dots \le (c-\ones_{A-X})(e_m)$, for any $0 \le t \le C$, the elements of $A_t$ precede the elements of $B_t$ in the ordering $e_1,\dots, e_m$.
    This implies that
    \begin{align}
        \sum_{i \in [m] :\: e_i \in A-X} (r(S_{i}+Y)-r(S_{i-1}+Y)) & = \sum_{t=0}^C (r(E_{\le t-1}+A_t+Y)-r(E_{\le t-1}+Y)) \\
        & = \sum_{t=0}^C r(E_{\le t-1}+A_t+Y) - \sum_{t=0}^{C-1} r(E_{\le t}+Y) - |Y|. \label{eq:sumAminusX}
    \end{align}
    The equations~\eqref{eq:sumd},~\eqref{eq:sumY}, and~\eqref{eq:sumAminusX} together imply
    \begin{align}
        \sum_{i=1}^m (r(S_{i-1}+Y)-r(S_i+Y))\cdot d(e_i) = r(E)\cdot C - \sum_{t=0}^C r(E_{\le t-1}+A_t+Y) + |Y|. \label{eq:final_d_sum}
    \end{align}
    Finally, we have
    \begin{align}
        \sum_{i=1}^m d(e_i) = \sum_{i = 1}^m (\ones_{A-X}-c)(e_i) = |A-X| - \sum_{i=1}^m c(e_i) = r(E)-|X|-\sum_{t=0}^C |E_t| t. \label{eq:d_weight_sum}
    \end{align}
    Using \eqref{eq:with_c_and_d}, \eqref{eq:sumX}, \eqref{eq:final_d_sum}, and \eqref{eq:d_weight_sum}, we conclude that
    \begin{align}
        \opt & = \left(r(E)\cdot C - \sum_{t=0}^{C-1} r(E_{\le t}+X)\right)+\left(r(E)\cdot C - \sum_{t=0}^C r(E_{\le t-1}+A_t+Y) + |Y|\right) \\ & \quad + \left(r(E)-|X|-\sum_{t=0}^C |E_t| t\right) \\
        & = (2C+1) r(E) -|X+Y|- \sum_{t=0}^{C-1} r(E_{\le t}+X) - \sum_{t=0}^C r(E_{\le t-1}+A_t+Y)-\sum_{t=0}^C |E_t| t. \label{eq:without_c_or_d}
    \end{align}

 Using the submodularity of $r$, we have 
\begin{align}
    \sum_{t=0}^{C-1} (r(E_{\le t} + X) + r(E_{\le t-1}+A_t+X+Y))
    & \ge \sum_{t=0}^{C-1} (r(E_{\le t-1}+A_t+X)+r(E_{\le t}+X+Y)),\label{eq:submod1} \\
    \sum_{t=0}^{C} (r(E_{\le t-1}+A_t+Y)+r(E_{\le t-1} + X+Y)) & \ge  \sum_{t=0}^C (r(E_{\le t-1}+Y)+r(E_{\le t-1}+A_t+X+Y))\label{eq:submod2}.
\end{align}
Summing these two inequalities and subtracting $\sum_{t=0}^{C-1} r(E_{\le t-1}+A_t+X+Y)+\sum_{t=1}^{C} r(E_{\le t-1}+X+Y)$ from both sides, we obtain
\begin{align}
    & \sum_{t=0}^{C-1} r(E_{\le t}+X) + \sum_{t=0}^C r(E_{\le t-1}+A_t+Y) + r(X+Y)\\
    & \ge \sum_{t=0}^{C-1} r(E_{\le t-1}+A_t+X) + \sum_{t=0}^C r(E_{\le t-1}+Y) + r(E_{\le C-1}+A_C+X+Y) \\
    & \ge \sum_{t=0}^{C-1} |A_{\le t}+X| + \sum_{t=0}^C |B_{\le t-1}+Y|+ r(E) \\
    & = C|X| + (C+1)|Y| + r(E) + \sum_{t=0}^{C-1} |E_{\le t}|.\label{eq:after_submod}
\end{align}

As we have 
\begin{align}
    \sum_{t=0}^{C-1} |E_{\le t}| = \sum_{t=0}^C (C-t) |E_t| = C |E'| -\sum_{t=0}^C |E_t| t = C (2r(E)-|X+Y|) - \sum_{t=0}^C |E_t| t, \label{eq:sumElet}
\end{align}
\eqref{eq:after_submod} implies 
\begin{align}
    \sum_{t=0}^{C-1} r(E_{\le t}+X) + \sum_{t=0}^C r(E_{\le t-1}+A_t+Y) \ge (2C+1) r(E)-r(X+Y)+|Y|-\sum_{t=0}^C |E_t|t. \label{eq:sum_ranks}
\end{align}
\eqref{eq:without_c_or_d} and \eqref{eq:sum_ranks} together give 
\begin{align*}
    \opt & \le \prn*{(2C+1) r(E) - |X+Y|-\sum_{t=0}^C |E_t| t} - \prn*{(2C+1) r(E) -r(X+Y)+|Y| -\sum_{t=0}^C |E_t| t} \\
    & = r(X+Y)-|X|,
\end{align*}
finishing the proof of~\eqref{eq:goal}.
\end{proof}

We remark that the upper bound on the sizes of $U$ and $V$ in \zcref{thm:main} can be improved to $r(X+Y+(A \cap B))-|A \cap B|$ if $A$ and $B$ are not disjoint, by considering the contraction of the matroid by $A \cap B$.

\subsection{Relation to Further Exchange Properties}\label{sec:relation}

In this section, we show how \zcref{thm:main} is related to other basis exchange properties from the literature such as the results of Greene~\cite{greene1974another} (\zcref{thm:greene}) and  Kung~\cite{kung1978alternating}.
We further explain how these results can be used to obtain a strengthening of Gabow's result~\cite{gabow1976decomposing}. % and a uniqueness version of the multiple exchange property.

First, we show that \zcref{thm:main} implies the following, seemingly stronger form.
For notational simplicity, we state the result only if the bases are disjoint.
In general, one can derive the analogous statements by contracting the intersection of the two bases and applying the theorem to the matroid thus obtained. 
Observe that statement \ref{it:ultra_kung_b} for $s=r(E(\bM))-r(X_2+Y_2)$ specializes to \zcref{thm:main} applied to the sets $X_2$ and $Y_2$, since the conditions $U\subseteq X_1$, $V\subseteq Y_1$, $r(E(\bM))-r(X_2+Y_2)\le |U|=|V|$, and $A-U+V$ and $B+U-V$ being bases translate to $A-U\supseteq X_2$, $B-V\supseteq Y_2$, $r(X_2+Y_2)\ge |A-U|=|B-V|$, and $B+(A-U)-(B-V)$ and $A-(A-U)+(B-V)$ being bases, respectively.

\begin{theorem} \label{thm:ultra_kung}
    Let $A$ and $B$ be disjoint bases of a matroid $\bM$ with rank function $r$.
    Let $(X_1,X_2)$ and $(Y_1,Y_2)$ be bipartitions of $A$ and $B$, respectively.
    \begin{enumerate}[label=(\alph*)]
        \item \label{it:ultra_kung_a} For any positive integer $s \le |X_1+Y_1|-r(X_1+Y_1)$, there exist subsets $U\subseteq X_1$ and $V\subseteq Y_1$ such that $s \le |U|=|V|\le s+r(X_1+Y_1)-\max\set{|X_1|, |Y_1|}$ and $A-U+V$ and $B+U-V$ are bases.
        \item \label{it:ultra_kung_b} For any positive integer $s \le r(E(\bM))-r(X_2+Y_2)$, there exist subsets $U\subseteq X_1$ and $V\subseteq Y_1$ such that $s \le |U|=|V|\le s+r(X_2+Y_2)-\max\set{|X_2|, |Y_2|}$ and $A-U+V$ and $B+U-V$ are bases.
    \end{enumerate}
\end{theorem}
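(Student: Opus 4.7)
The plan is to deduce (b) from \zcref{thm:main} by augmenting $X_2$ or $Y_2$ with carefully chosen extra elements, and then to derive (a) from (b) by passing to the dual matroid of $\bM$.

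For (b), I would set $T \coloneqq r(E(\bM))-s-r(X_2+Y_2) \ge 0$, and assume without loss of generality that $|X_2|\ge |Y_2|$; the other case is symmetric, augmenting $Y_2$ instead. The key observation is that $X_2$ is independent, so $|X_1|=r(E(\bM))-|X_2|\ge r(E(\bM))-r(X_2+Y_2)\ge T$; hence one can pick $Z\subseteq X_1$ with $|Z|=T$. Letting $X'\coloneqq X_2\cup Z$ and $Y'\coloneqq Y_2$, submodularity of $r$ gives $r(X'+Y')\le r(X_2+Y_2)+T = r(E(\bM))-s$, while $|X'|=|X_2|+T=\max(|X_2|,|Y_2|)+T$.

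Next, applying \zcref{thm:main} to $A,B,X',Y'$ would yield $\tilde U\subseteq A$ and $\tilde V\subseteq B$ with $X'\subseteq \tilde U$, $Y'\subseteq \tilde V$, $|\tilde U|=|\tilde V|\le r(X'+Y')$, and both $A-\tilde U+\tilde V$ and $B+\tilde U-\tilde V$ bases. Writing $\delta\coloneqq r(X_2+Y_2)-\max(|X_2|,|Y_2|)$, this forces $|\tilde U|\in[\max(|X'|,|Y'|), r(X'+Y')]\subseteq [r(E(\bM))-s-\delta,\, r(E(\bM))-s]$. Then the sets $U\coloneqq A\setminus \tilde U$ and $V\coloneqq B\setminus \tilde V$ satisfy $U\subseteq X_1$ (since $X_2\subseteq \tilde U$), $V\subseteq Y_1$, and $|U|=|V|=r(E(\bM))-|\tilde U|\in [s,\, s+\delta]$. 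A direct computation gives $A-U+V = B+\tilde U-\tilde V$ and $B+U-V = A-\tilde U+\tilde V$, so both are bases, completing (b).

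To derive (a), I would first delete the elements of $E(\bM)\setminus (A\cup B)$, which affects neither the rank function on subsets of $A\cup B$ nor any quantity appearing in the statement; hence we may assume $E(\bM)=A\cup B$. Then $A$ and $B$ remain disjoint bases of the dual matroid $\bM^*$, and applying (b) to $\bM^*$ with the same bipartitions would give (a). Indeed, $|E(\bM)|=2r(E(\bM))$ yields $r^*(E(\bM))=r(E(\bM))$, and combined with $r^*(S)=r(E(\bM)\setminus S)+|S|-r(E(\bM))$ applied to $S=X_2+Y_2$, a short calculation produces the identities $r^*(E(\bM))-r^*(X_2+Y_2)=|X_1+Y_1|-r(X_1+Y_1)$ and $r^*(X_2+Y_2)-\max(|X_2|,|Y_2|)=r(X_1+Y_1)-\max(|X_1|,|Y_1|)$; these match the quantities appearing in (a). Finally, $A-U+V$ and $B+U-V$ are complements in $E(\bM)=A\cup B$, so being bases of $\bM^*$ is equivalent to being bases of $\bM$. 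The only substantive step is the augmentation in (b): each element we add to $X_2$ (or $Y_2$) trades one unit of rank budget for one unit of cardinality, aligning the size intervals produced by \zcref{thm:main} with those demanded by (b). The feasibility bound $|X_1|\ge T$ falls out of the independence of $X_2$, and no serious additional obstacle is expected.
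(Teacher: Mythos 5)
Your proof is correct and follows essentially the same route as the paper's: prove (b) directly by enlarging $X_2$ to a set $X' = X_2 \cup Z$ with $|Z| = r(E(\bM)) - s - r(X_2+Y_2)$, apply \zcref{thm:main} to $(X', Y_2)$, and take complements inside $A$ and $B$; then derive (a) from (b) by passing to the dual after reducing to $E(\bM)=A\cup B$. Your padding size $T$ is the paper's $s'$, and your bookkeeping with $\delta = r(X_2+Y_2)-\max\{|X_2|,|Y_2|\}$ and the dual-rank identities matches the paper's computation exactly. One minor point in your favor: you explicitly perform the deletion of $E(\bM)\setminus(A\cup B)$ before dualizing, which the paper leaves implicit even though it is needed for $A$ and $B$ to be bases of $\bM^*$ and for the identity $|X_1+Y_1|+|X_2+Y_2|=|E(\bM)|$ used in the rank conversion.
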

\begin{proof}
    We may assume that $|X_2|\ge |Y_2|$ and let $E\coloneq E(\bM)$.

    First we show that \ref{it:ultra_kung_b} holds. 
    Let $s$ be a positive integer with $s \le r(E)-r(X_2+Y_2)$ and let $s'\coloneqq r(E)-r(X_2+Y_2)-s$.    
    Since $s' \ge 0$ and  $|X_2|+s' \le |X_2|+r(E)-r(X_2+Y_2) \le r(E)$, there exists a subset $X_2'\subseteq A$ with $X_2\subseteq X'_2$ and $|X'_2|=|X_2|+s'$.
    By \zcref{thm:main} applied to the bases $A$ and $B$ of the matroid $\bM$ and the subsets $X'_2\subseteq A$ and $Y_2\subseteq B$, there exist subsets $U'\subseteq A$ and $V'\subseteq B$ such that $X'_2\subseteq U'$, $Y_2\subseteq V'$, $A-U'+V'$ and $B+U'-V'$ are bases, and 
    \begin{align}
        |U'| =|V'| \le r(X'_2+Y_2) \le r(X_2+Y_2)+s'.  
    \end{align}
    Let $U\coloneq A-U'$ and $V\coloneqq B-V'$.
    Then, $A-U+V=B+U'-V'$ and $B+U-V=A-U'+V'$ are bases and
    \begin{align}
        |U|=|V| & \ge r(E)-r(X_2+Y_2)-s' = s. 
    \end{align}
    Since $|U|=|V|\le |A-X'_2| = r(E)-|X_2|-s' = s+r(X_2+Y_2)-|X_2|$, \ref{it:ultra_kung_b} follows.

    To show \ref{it:ultra_kung_a}, we apply statement \ref{it:ultra_kung_b} to the dual matroid $\bM^*$.
    Denote the rank function of $\bM^*$ by $r^*$, and note that 
    \begin{align}
        s & \le |X_1+Y_1|-r(X_1+Y_1) \\
        & = |X_1+Y_1|-r^*(X_2+Y_2)+|X_2+Y_2|-r(E) \\
        &= r^*(E)-r^*(X_2+Y_2).
    \end{align}
    Then, \ref{it:ultra_kung_b} implies that there exist subsets $U\subseteq X_1$ and $V\subseteq Y_1$ such that $A-U+V$ and $B+U-V$ are bases of $\bM^*$ and 
    \begin{align}
        s \le |U|=|V| & \le s+r^*(X_2+Y_2) - |X_2| = s + r(X_1+Y_1)-|Y_1|,
    \end{align}
    finishing the proof.
\end{proof}

By choosing $s=1$ in \zcref{thm:ultra_kung}\ref{it:ultra_kung_a} and \ref{it:ultra_kung_b}, we get the following result.
By omitting the upper bound on $|U|=|V|$, the statement is equivalent to the result of Kung~\cite[Theorem~4]{kung1978alternating}.
Nevertheless, it is not difficult to derive this stronger variant from Kung's result: e.g.\ to show \ref{it:kung_a} if $|X_1|\ge |Y_1|$, one can choose a smallest $Y'_1\subseteq Y_1$ such that $X_1+Y'_1$ is dependent.

\begin{corollary}[see Kung~\cite{kung1978alternating}] \label{cor:stronger_kung}
    Let $A$ and $B$ be disjoint bases of a matroid $\bM$ with rank function $r$.
    Let $(X_1,X_2)$ and $(Y_1,Y_2)$ be bipartitions of $A$ and $B$, respectively.
    \begin{enumerate}[label=(\alph*)]
        \item \label{it:kung_a} If $X_1+Y_1$ is dependent, then there exist subsets $U\subseteq X_1$ and $V\subseteq Y_1$ such that $1\le |U|=|V|\le r(X_1+ Y_1)-\max\set{|X_1|,|Y_1|}+1$ and $A-U+V$ and $B+U-V$ are bases.% of $\bM$.
        \item \label{it:kung_b} If $X_2+Y_2$ is non-spanning, then there exist subsets $U\subseteq X_1$ and $V\subseteq Y_1$ such that $1\le |U|=|V|\le r(X_2+Y_2)-\max\set{|X_2|,|Y_2|}+1$ and $A-U+V$ and $B+U-V$ are bases.% of $\bM$.
    \end{enumerate}
\end{corollary}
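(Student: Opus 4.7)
The plan is to derive both parts of the corollary as direct specializations of \zcref{thm:ultra_kung} at the value $s = 1$, which is the smallest admissible choice. The bulk of the work has already been done in proving \zcref{thm:ultra_kung} itself; the only task remaining is to verify that the hypotheses on $s$ are met in each case and to observe that the resulting upper bound takes the claimed form.

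For part~\ref{it:kung_a}, I would first note that the assumption that $X_1 + Y_1$ is dependent is equivalent to $|X_1+Y_1| - r(X_1+Y_1) \ge 1$, so $s = 1$ satisfies the hypothesis $s \le |X_1+Y_1| - r(X_1+Y_1)$ of \zcref{thm:ultra_kung}\ref{it:ultra_kung_a}. Invoking that statement yields subsets $U \subseteq X_1$ and $V \subseteq Y_1$ such that $A - U + V$ and $B + U - V$ are bases and
\begin{align}
  1 \le |U| = |V| \le 1 + r(X_1+Y_1) - \max\set{|X_1|, |Y_1|},
\end{align}
which is exactly the conclusion of \ref{it:kung_a}. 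For part~\ref{it:kung_b}, the assumption that $X_2+Y_2$ is non-spanning translates to $r(E(\bM)) - r(X_2+Y_2) \ge 1$, so $s=1$ satisfies the hypothesis of \zcref{thm:ultra_kung}\ref{it:ultra_kung_b}; applying that statement gives the analogous conclusion with $X_2, Y_2$ in place of $X_1, Y_1$ in the upper bound.

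There is essentially no obstacle to overcome beyond a clean bookkeeping of the inequalities, since \zcref{thm:ultra_kung} was proven precisely so that this specialization goes through. The only content of \zcref{cor:stronger_kung} that extends Kung's original result~\cite{kung1978alternating} is the upper bound on $|U|=|V|$, which is inherited verbatim from \zcref{thm:ultra_kung}. I would close the proof with a brief remark, as the authors do in the prose around the corollary, that this stronger form can alternatively be derived directly from Kung's theorem by choosing, for instance in \ref{it:kung_a} with $|X_1| \ge |Y_1|$, a minimal subset $Y_1' \subseteq Y_1$ for which $X_1 + Y_1'$ is dependent and applying Kung's result to the pair $(X_1, Y_1')$.
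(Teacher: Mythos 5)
Your proof is correct and coincides exactly with the paper's approach: the paper likewise derives \zcref{cor:stronger_kung} by taking $s=1$ in \zcref{thm:ultra_kung}\ref{it:ultra_kung_a} and \ref{it:ultra_kung_b}, noting that dependence of $X_1+Y_1$ (resp.\ non-spanningness of $X_2+Y_2$) makes $s=1$ admissible. The closing remark about deriving the strengthened bound directly from Kung's theorem also matches the paper's commentary.
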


The following is an immediate consequence of \zcref{cor:stronger_kung}\ref{it:kung_b} by using the inequality $r(X_2+Y_2)\le |X_2+Y_2|$ and contracting $A\cap B$.
The statement without the upper bound on $|U|=|V|$ was first shown by Greene (\zcref{thm:greene}).
Nevertheless, one can also derive the form with $|U|=|V|\le |X_2|$ or $|U|=|V|\le |Y_2|$ from \zcref{thm:greene} by the appropriate change of the bipartitions.

\begin{corollary}[see Greene~\cite{greene1974another}] \label{cor:stronger_greene}
    Let $A$ and $B$ be bases of a matroid $\bM$.
    Let $\set{X_1,X_2}$ and $\set{Y_1,Y_2}$ be bipartitions of $A\setminus B$ and $B\setminus A$, respectively.
    If $|X_1+Y_1|\ge |A\setminus B|+1=|B\setminus A|+1$, then there exist subsets $U\subseteq X_1$ and $V\subseteq Y_1$ such that $1\le |U|=|V|\le \min\set{|X_2|,|Y_2|}+1$ and $A-U+V$ and $B+U-V$ are bases.% of $\bM$.
\end{corollary}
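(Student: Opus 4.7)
The plan is to reduce the statement directly to \zcref{cor:stronger_kung}\ref{it:kung_b} via the hint provided by the authors, namely by contracting the common part $A \cap B$ and then relaxing the rank bound to a size bound.

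First, I would pass to the matroid $\bM / (A \cap B)$. This reduces the setup to one in which $A$ and $B$ are disjoint bases of a matroid, $X_1 \cup X_2$ is a bipartition of $A$, and $Y_1 \cup Y_2$ is a bipartition of $B$. The hypothesis $|X_1 + Y_1| \ge |A \setminus B| + 1$ survives the contraction unchanged, and the conclusions for the contracted matroid lift back to the original one by re-adjoining $A \cap B$ to the returned bases.

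Next, I would translate the hypothesis into the form required by \zcref{cor:stronger_kung}\ref{it:kung_b}. Since $A$ and $B$ are now disjoint bases, we have $|A| = |B| = r(E(\bM))$, and the inequality $|X_1| + |Y_1| \ge r(E(\bM)) + 1$ is equivalent to $|X_2| + |Y_2| \le r(E(\bM)) - 1$. Using the trivial estimate $r(X_2 + Y_2) \le |X_2 + Y_2| < r(E(\bM))$, the set $X_2 + Y_2$ is non-spanning, which is exactly the hypothesis of \zcref{cor:stronger_kung}\ref{it:kung_b}.

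Applying that corollary then yields subsets $U \subseteq X_1$ and $V \subseteq Y_1$ with $1 \le |U| = |V| \le r(X_2 + Y_2) - \max\set{|X_2|, |Y_2|} + 1$ such that $A - U + V$ and $B + U - V$ are bases. A final substitution of the inequality $r(X_2 + Y_2) \le |X_2| + |Y_2|$ converts the upper bound into $|X_2| + |Y_2| - \max\set{|X_2|, |Y_2|} + 1 = \min\set{|X_2|, |Y_2|} + 1$, giving the desired conclusion. There is no real obstacle: the only point requiring minor care is verifying that the contraction step preserves both the hypothesis and the form of the conclusion, after which the argument is a one-line deduction from \zcref{cor:stronger_kung}.
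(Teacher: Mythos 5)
Your proposal is correct and takes exactly the same route as the paper: contract $A\cap B$ to reduce to disjoint bases, observe that $|X_2|+|Y_2|\le r(E)-1$ makes $X_2+Y_2$ non-spanning, apply \zcref{cor:stronger_kung}\ref{it:kung_b}, and finish with $r(X_2+Y_2)\le|X_2|+|Y_2|$.
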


We observe that \zcref{cor:stronger_greene} can be used to show that all matroids satisfy relaxed forms of base orderability and Gabow's conjecture~\cite{gabow1976decomposing}.
A matroid is called \textit{base orderable} if for any bases $A$ and $B$, there exists a bijection $\varphi\colon A\setminus B\to B\setminus A$ such that $A-e+\varphi(e)$ and $B+e-\varphi(e)$ are bases for all $e \in A\setminus B$.
While not all matroids are base orderable, several matroids such as gammoids are base orderable, and this
property has important applications in fair division.
Observe that base orderability corresponds to the existence of subsets $U_1,\dots, U_k$ and $V_1,\dots, V_k$ in \ref{it:bo} with $k=|A\setminus B|=|B\setminus A|$ (and thus $|U_1|=\dots = |U_k|=|V_1|=\dots=|V_k|=1$).
While, as we mentioned, this is not a property of all matroids, Gabow's conjecture~\cite{gabow1976decomposing} says that each matroid satisfies \ref{it:gabow} with $k=|A\setminus B|=|B\setminus A|$.

\begin{corollary} \label{cor:relaxed}
    Let $A$ and $B$ be bases of a matroid, and $k\in \N$ with $2^{k}-1\le |A\setminus B|$.
    \begin{enumerate}[label=\upshape(\alph*)]
        \item \label{it:bo} There exist pairwise disjoint nonempty subsets $U_1,\dots, U_k\subseteq A\setminus B$ and $V_1,\dots, V_k\subseteq B\setminus A$ such that $|U_i|=|V_i|\le 2^{i-1}$ and $A-U_i+V_i$ and $B+U_i-V_i$ are bases for $i \in [k]$. 
        \item \label{it:gabow} There exist nonempty subsets $U_1 \subsetneq \dots \subsetneq U_k \subseteq A\setminus B$ and $V_1 \subsetneq \dots \subsetneq V_k \subseteq B\setminus A$ such that $|U_i|=|V_i|\le 2^{i}-1$ and $A-U_i+V_i$ and $B+U_i-V_i$ are bases for $i \in [k]$. 
    \end{enumerate}
\end{corollary}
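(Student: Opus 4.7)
The plan is to prove both items by induction on $i$, with \zcref{cor:stronger_greene} (or a special instance thereof) supplying each inductive step. For part~\ref{it:bo}, I would apply the corollary to the unmodified bases $A$ and $B$, with a bipartition of $A\setminus B$ and $B\setminus A$ determined by the sets already chosen, so that the produced $U_i$ and $V_i$ are automatically disjoint from the previous ones. For part~\ref{it:gabow}, I would instead apply the symmetric exchange property (which is the case $X'_2 = Y'_2 = \emptyset$ of \zcref{cor:stronger_greene}) to the \emph{updated} bases $A' \coloneqq A - U_{i-1} + V_{i-1}$ and $B' \coloneqq B + U_{i-1} - V_{i-1}$, extending $U_{i-1}$ and $V_{i-1}$ by one element each.

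For part~\ref{it:bo}, suppose inductively that pairwise disjoint $U_1,\dots,U_{i-1}\subseteq A\setminus B$ and $V_1,\dots,V_{i-1}\subseteq B\setminus A$ have been found with $|U_j|=|V_j|\le 2^{j-1}$ and the desired basis properties. Put $X_2 \coloneqq \bigcup_{j<i} U_j$, $X_1 \coloneqq (A\setminus B)\setminus X_2$, and define $Y_1,Y_2$ analogously. By induction $|X_2|=|Y_2|\le \sum_{j=1}^{i-1}2^{j-1} = 2^{i-1}-1$, and since $X_1\subseteq A\setminus B$ and $Y_1\subseteq B\setminus A$ are disjoint, the hypothesis $|A\setminus B|\ge 2^k-1\ge 2^i-1$ yields $|X_1+Y_1|=2(|A\setminus B|-|X_2|)\ge |A\setminus B|+1$. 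Hence \zcref{cor:stronger_greene} applies and produces $U_i\subseteq X_1$, $V_i\subseteq Y_1$ with $1\le |U_i|=|V_i|\le |X_2|+1\le 2^{i-1}$ and $A-U_i+V_i$, $B+U_i-V_i$ bases; these $U_i,V_i$ are disjoint from the previous sets by construction.

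For part~\ref{it:gabow}, suppose inductively that nested $U_1\subsetneq\dots\subsetneq U_{i-1}$ and $V_1\subsetneq\dots\subsetneq V_{i-1}$ have been built with $|U_j|=|V_j|\le 2^j-1$ and the desired basis properties. Then $A'$ and $B'$ are bases of $\bM$ with $A'\setminus B'=(A\setminus B)\setminus U_{i-1}$ and $B'\setminus A'=(B\setminus A)\setminus V_{i-1}$; the hypothesis $|A\setminus B|\ge 2^k-1\ge 2^i-1>2^{i-1}-1\ge |U_{i-1}|$ guarantees $A'\setminus B'\ne \emptyset$. The symmetric exchange property then supplies $u\in A'\setminus B'$ and $v\in B'\setminus A'$ such that both $A'-u+v$ and $B'+u-v$ are bases, and letting $U_i\coloneqq U_{i-1}+u$, $V_i\coloneqq V_{i-1}+v$ we have $A-U_i+V_i=A'-u+v$ and $B+U_i-V_i=B'+u-v$ bases, with $|U_i|=|V_i|=|U_{i-1}|+1\le 2^{i-1}\le 2^i-1$.

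The main point requiring care is in part~\ref{it:bo}: one must verify that the geometrically growing budget $|X_2|\le 2^{i-1}-1$ leaves enough room for the applicability condition $|X_1+Y_1|\ge |A\setminus B|+1$ of \zcref{cor:stronger_greene}. A direct computation shows this reduces to $|A\setminus B|\ge 2^i-1$, which is precisely what the hypothesis $|A\setminus B|\ge 2^k-1$ provides, tightly at $i=k$. Part~\ref{it:gabow} by contrast carries substantial slack: the one-element-per-step construction already attains $|U_i|=i$, which is well below the stated bound $2^i-1$; this form is stated because it matches the cumulative budget $\sum_{j\le i} 2^{j-1}=2^i-1$ arising in~\ref{it:bo}.
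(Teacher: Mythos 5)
Your proof of part~\ref{it:bo} is correct and coincides with the paper's argument: same bipartition of $A\setminus B$ and $B\setminus A$ induced by the already-chosen sets, same counting to verify the hypothesis of \zcref{cor:stronger_greene}, same conclusion.

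Part~\ref{it:gabow}, however, has a genuine gap. The computation $A'\setminus B'=(A\setminus B)\setminus U_{i-1}$ is wrong: with $A'=A-U_{i-1}+V_{i-1}$ and $B'=B+U_{i-1}-V_{i-1}$ one has $A'\cap B'=A\cap B$, hence
\[
A'\setminus B' \;=\; \bigl((A\setminus B)\setminus U_{i-1}\bigr)\cup V_{i-1},
\qquad
B'\setminus A' \;=\; \bigl((B\setminus A)\setminus V_{i-1}\bigr)\cup U_{i-1}.
\]
Consequently, the symmetric exchange property applied to the bases $A'$ and $B'$ produces $u\in A'\setminus B'$ and $v\in B'\setminus A'$ that may lie in $V_{i-1}$ and $U_{i-1}$ respectively; even if you choose $x=u\in(A\setminus B)\setminus U_{i-1}$, the partner $y=v$ the exchange property hands you is only guaranteed to lie in $B'\setminus A'$, so $v\in U_{i-1}$ is not excluded. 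In that event $V_i\coloneqq V_{i-1}+v$ is not a subset of $B\setminus A$, and the expression $B+U_i-V_i$ is no longer even well-formed, so the induction breaks. The paper sidesteps exactly this issue by invoking \zcref{cor:stronger_greene} on $A'$ and $B'$ with the bipartitions $\{(A'\setminus B')\setminus V_{i-1},\,V_{i-1}\}$ and $\{(B'\setminus A')\setminus U_{i-1},\,U_{i-1}\}$, which forces $U'_i$ and $V'_i$ into the first parts, i.e.\ into $(A\setminus B)\setminus U_{i-1}$ and $(B\setminus A)\setminus V_{i-1}$. That also explains the size bound $2^i-1$ in the statement: the controlled exchange can have $|U'_i|$ as large as $|U_{i-1}|+1$, so the paper genuinely needs the geometric budget, whereas your one-element step was only an artifact of the unjustified use of the plain symmetric exchange property.
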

\begin{proof}
    First we show \ref{it:bo} by induction on $k$.
    There is nothing to prove if $k=0$.
    Assume that $k\ge 1$ and there exist pairwise disjoint nonempty subsets $U_1,\dots, U_{k-1}\subseteq A\setminus B$ and $V_1,\dots, V_{k-1} \subseteq B\setminus A$ such that $|U_i|=|V_i|\le 2^{i-1}$ and $A-U_i+V_i$ and $B+U_i-V_i$ are bases for $i \in [k-1]$.
    Let $X_2\coloneqq U_1+ \dots + U_{k-1}$, $X_1\coloneqq (A\setminus B)-X_2$, $Y_2\coloneqq V_1+ \dots + V_{k-1}$, and $Y_1\coloneqq (B\setminus A)-Y_2$.
    Then, $|X_2|=|Y_2| \le \sum_{i=1}^{k-1} 2^{i-1} = 2^{k-1}-1$,
    thus 
    $|X_1+Y_1|\ge 2\cdot (|A\setminus B| - 2^{k-1}+1) \ge |A\setminus B|+(2^k-1)-2^k+2> |A\setminus B|$.
    Therefore, \zcref{cor:stronger_greene} implies that there exist nonempty subsets $U_k\subseteq X_1$ and $V_k\subseteq Y_1$ such that $A-U_k+V_k$ and $B+U_k-V_k$ are bases and
    $|U_k|=|V_k| \le \min \set{|X_2|,|Y_2|}+1 \le 2^{k-1}$.
    This finishes the proof of \ref{it:bo}.

    Next we show \ref{it:gabow} by induction on $k$.
    There is nothing to prove if $k=0$.
    Assume that $k\ge 1$ and there exist nonempty subsets $U_1\subsetneq \dots \subsetneq U_{k-1} \subseteq  A\setminus B$ and $V_1\subsetneq \dots \subsetneq V_{k-1}\subseteq B\setminus A$ such that $|U_i|=|V_i|\le 2^{i}-1$ and $A-U_i+V_i$ and $B+U_i-V_i$ are bases for $i \in [k-1]$.
    Let $A'\coloneqq A-U_{k-1}+V_{k-1}$, $B'\coloneqq B+U_{k-1}-V_{k-1}$.
    Since $|(A'-V_{k-1})+(B'-U_{k-1})|  \ge 2\cdot (|A'\setminus B'|-2^{k-1}+1) \ge |A'\setminus B'|+(2^k-1)-2^k+2 > |A'\setminus B'|$, \zcref{cor:stronger_greene} applied to the bipartitions $(A'-V_{k-1}, V_{k-1})$ and $(B'-U_{k-1}, U_{k-1})$ of $A'\setminus B'$ and $B'\setminus A'$, respectively, implies that there exist nonempty subsets $U'_k\subseteq A'-V_{k-1}=A-U_{k-1}$ and $V'_k\subseteq B'-U_{k-1}= B-V_{k-1}$ such that $|U'_k|=|V'_k| \le \min\set{|V_{k-1}|, |U_{k-1}|}+1 \le 2^{k-1}$ and 
    $A'-U'_k+V'_k = A-(U_{k-1}+U'_k)+(V_{k-1}+V'_k)$ and $B'+U'_k-V'_k = B+(U_{k-1}+U'_k)-(V_{k-1}+V'_k)$ are bases. 
    This finishes the proof of \ref{it:gabow} by setting $U_k\coloneqq U_{k-1}+U'_k$ and $V_k\coloneqq V_{k-1}+V'_k$.
\end{proof}

\begin{remark} \label{rem:gabow}
    \zcref{cor:relaxed}\ref{it:gabow} is closely related to the result of
    Gabow~\cite{gabow1976decomposing} who showed that if $X\subseteq A\setminus B$ and $Y\subseteq B\setminus A$ are nonempty subsets such that $A-X+Y$ and $B+X-Y$ are bases, then there exists a positive integer $k$, subsets $\emptyset =U_0\subsetneq U_1\subsetneq \dots \subsetneq U_k\subsetneq U_{k+1} = X$ and $\emptyset =V_0\subsetneq V_1\subsetneq \dots \subsetneq V_k \subsetneq V_{k+1} = Y$, and bijections $\varphi_i^1\colon U_i\setminus U_{i-1}\to V_i\setminus V_{i-1}$ and $\varphi_i^2\colon V_i\setminus V_{i-1}\to U_i\setminus U_{i-1}$ for $i \in [k+1]$ such that for each $i \in [k+1]$, $A-(U_{i-1}+Z)+(V_{i-1}+\varphi_i^1(Z))$ is a basis for each $Z\subseteq U_i\setminus U_{i-1}$ and $B+(U_{i-1}+\varphi^2_i(Z))-(V_{i-1}+Z)$ is a basis for each $Z\subseteq V_i\setminus V_{i-1}$.
    The statement usually referred to as Gabow's conjecture is his question whether $k$ can be chosen to be $|A\setminus B|-1$ if $X=A\setminus B$ and $Y=B\setminus A$.
    In this case, Gabow's result and \zcref{cor:relaxed}\ref{it:gabow} together imply that $k$ can be chosen to be at least $q\coloneqq \lfloor \log_2 |A\setminus B|\rfloor$.
    Indeed, as $2^q-1<|A\setminus B|$, the latter gives subsets $\emptyset \ne U_1 \subsetneq \dots \subsetneq U_q \subsetneq A\setminus B$ and $\emptyset \ne V_1 \subsetneq \dots \subsetneq V_q \subsetneq B\setminus A$ such that $A-U_i+V_i$ and $B+U_i-V_i$ are bases for $i \in [q]$.
    Setting $U_0\coloneqq \emptyset$, $U_{q+1}\coloneq A\setminus B$, $V_0\coloneqq \emptyset$, and $V_{q+1}\coloneqq B\setminus A$, the claim follows by applying Gabow's result to the subsets $U_i\setminus U_{i-1}$ and $V_i\setminus V_{i-1}$ of the bases $A-U_{i-1}+V_{i-1}$ and $B+U_{i-1}-V_{i-1}$ for $i \in [q+1]$.
    %Given chains of subsets $\emptyset \ne U_1 \subsetneq \dots \subsetneq U_k \subsetneq A\setminus B$ and $\emptyset \ne V_1 \subsetneq \dots \subsetneq V_k \subsetneq B\setminus A$ as in   \zcref{cor:relaxed}\ref{it:gabow}, letting $U_0\coloneqq \emptyset$, $U_{k+1}\coloneq A\setminus B$, $V_0\coloneqq \emptyset$, and $V_{k+1}\coloneqq B\setminus A$, and  applying Gabow's result to the subsets $U_i\setminus U_{i-1}$ and $V_i\setminus V_{i-1}$ of the bases $A-U_{i-1}+V_{i-1}$ and $B+U_{i-1}-V_{i-1}$ for $i \in [k+1]$, we get that $k$ can be chosen to be at least $\lfloor \log_2 |A\setminus B|\rfloor$.
\end{remark}

\begin{remark}
    \zcref{cor:relaxed} is also related to the results of \cite{horsch2024problems} who showed that for any prime power $q$, there exists a function $f_q\colon \N \to \N$ such that for any $k \in \N$ and any bases $A$ and $B$ of a $\GF(q)$-representable matroid with $|A\setminus B|\ge f_q(k)$, there exist pairwise disjoint subsets $X_1,\dots, X_k\subseteq A\setminus B$ and $Y_1,\dots, Y_k\subseteq B\setminus A$ such that $A-\bigcup_{i \in I} X_i+\bigcup_{i \in I} Y_i$ is a basis for any $I\subseteq [k]$.  
    In general, we are not aware of any counterexample to the stronger version for general matroids that there is a function $f\colon \N \to \N$ such that for any $k\in \N$ and any bases $A$ and $B$ of a matroid with $|A\setminus B|\ge f(k)$, there exist pairwise disjoint subsets $X_1,\dots, X_k\subseteq A\setminus B$ and $Y_1,\dots, Y_k\subseteq B\setminus A$ such that $A-\bigcup_{i \in I} X_i+\bigcup_{i \in I} Y_i$ and $B+\bigcup_{i \in I} X_i-\bigcup_{i \in I} Y_i$ are bases for any $I\subseteq [k]$. 
    If true, this would imply \zcref{cor:relaxed} up to the precise lower bound on $|A\setminus B|$, and it would also imply the polynomial solvability of finding a minimum-weight matroid basis in a group-labeled matroid with a fixed number of forbidden labels~\cite{horsch2024problems}.
\end{remark}

\section{Robustness of Local Search}\label{sec:robustness}
In this section, we prove \zcref{thm:robustness}, which concerns the robustness property of local search, using \zcref{thm:main}, restated below.

\robustness*

For notational simplicity, let $\mathcal{B}_k(B) \coloneqq \Set{B' \in \mathcal{B}}{|B' \setminus B| \le k}$ for $k \in \Z_{\ge 0}$ and $B \in \cB$.
The following lemma implies \zcref{thm:robustness}. 

\begin{lemma} \label{lem:robust}
    For any $B_0 \in \cB$, $B_1 \in \cB_1(B_0)$, and $k \in \N$, we have
    \begin{align} \label{eq:lem-robust}
        \max\Set{w(B)}{B \in \cB_{k-1}(B_1)}
        \ge
        \max\Set{w(B)}{B \in \cB_{k}(B_0)} - \err(B_1 \given B_0).
    \end{align}
\end{lemma}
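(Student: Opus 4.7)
The plan is to find bases $B^{(1)} \in \cB_1(B_0)$ and $B^* \in \cB_{k-1}(B_1)$ whose element multisets satisfy $B^{(1)} + B^* = B_1 + B^{**}$, where $B^{**}$ is any basis maximising the right-hand side of \eqref{eq:lem-robust}. Such a pair immediately gives $w(B^{(1)}) + w(B^*) = w(B_1) + w(B^{**})$, and combining with the tautological bound $w(B^{(1)}) \le w(B_1) + \err(B_1 \mid B_0)$ yields $w(B^*) \ge w(B^{**}) - \err(B_1 \mid B_0)$, establishing the lemma.

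To construct such a pair, write $B_1 = B_0 - x_0 + y_0$, allowing $x_0 = y_0$ when $B_1 = B_0$, and look for $U \subseteq B_1 \setminus B^{**}$, $V \subseteq B^{**} \setminus B_1$ with $|U| = |V|$ such that both $B^{(1)} = B_1 - U + V$ and $B^* = B^{**} + U - V$ are bases; the multiset identity $B^{(1)} + B^* = B_1 + B^{**}$ is then automatic. The condition $B^{(1)} \in \cB_1(B_0)$ becomes a structural constraint on $U, V$ relative to $x_0, y_0$, while $B^* \in \cB_{k-1}(B_1)$ is equivalent to $|V| \ge |B^{**} \setminus B_1| - (k-1)$, a quantity bounded above by $2$. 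The strategy is to apply \zcref{thm:main} to $A = B_1$, $B = B^{**}$ with $X := \{y_0\} \cap (B_1 \setminus B^{**})$ and $Y := \{x_0\} \cap (B^{**} \setminus B_1)$, adjusted to a singleton in $B^{**} \setminus B_0$ in the degenerate case $B_1 = B_0$ with $|B^{**} \setminus B_0| = k$. A short case analysis based on whether $x_0, y_0 \in B^{**}$ shows that the resulting $(U, V)$ does the job in every case except one troublesome sub-case.

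The troublesome sub-case is $y_0 \notin B^{**}$, $x_0 \in B^{**}$, and $|B^{**} \setminus B_0| = k$: here we need $|V| = 2$, but \zcref{thm:main} may return $|U| = |V| = 1$, whose only useful content is that $\tilde B := B^{**} - x_0 + y_0$ is a basis. This mismatch between the existential guarantee $|U| \le r(X+Y) \le 2$ and the specific size we need is the main obstacle. The plan to resolve it is to invoke Brualdi's symmetric exchange property between $\tilde B$ and $B_0$: via the associated perfect-matching bijection we pick $e \in B^{**} \setminus B_0$ and $f \in B_0 \setminus B^{**}$ (whose existence follows from a pigeonhole argument on the bijection, with the edge case $|B^{**} \setminus B_0| = 1$ handled by the canonical choice $(e, f) = (v, x')$ for $B^{**} = B_0 - x' + v$, which always works because $\tilde B - v + x' = B_1$ and $B_0 - x' + v = B^{**}$) such that both $\tilde B - e + f$ and $B_0 - f + e$ are bases. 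Setting $B^{(1)} := B_0 - f + e$ and $B^* := \tilde B - e + f = B^{**} + y_0 - x_0 - e + f$ produces the required pair; the multiset identity $B^{(1)} + B^* = B_1 + B^{**}$ follows from a direct calculation, $|B^{(1)} \setminus B_0| = 1$, and $|B^* \setminus B_1| = k-1$ upon noting that $e \in \tilde B \setminus B_1$ is removed while $f \in B_1$ is added.
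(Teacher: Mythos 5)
Your overall plan—reduce the lemma to finding bases $B^{(1)}\in\cB_1(B_0)$ and $B^*\in\cB_{k-1}(B_1)$ with the exact multiset identity $B^{(1)}+B^*=B_1+B^{**}$—is strictly stronger than what the lemma requires, and it fails for general matroids. The paper's proof also invokes \zcref{thm:main} with $|X|=|Y|=1$ and runs into the same sub-case you flag (where the theorem only returns $|U|=|V|=1$, so all one learns is that $\tilde B:=B^{**}-x_0+y_0$ is a basis), but there it deliberately gives up the multiset split: it exchanges a \emph{minimum}-weight element $a^*\in B^{**}\setminus B_0$ out of $\tilde B$, which only yields the inequality $w(\cdot)+w(\cdot)\ge w(B^{**})+w(B_1)$, not equality. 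Your repair of this sub-case is the gap: finding a symmetrically exchangeable pair $(e,f)$ between $\tilde B$ and $B_0$ with $e\ne y_0$, $f\ne x_0$ ``via the associated perfect-matching bijection'' presupposes a bijection $\varphi\colon\tilde B\setminus B_0\to B_0\setminus\tilde B$ for which \emph{both} $\tilde B-e+\varphi(e)$ and $B_0-\varphi(e)+e$ are bases. That is base orderability of the pair $(\tilde B,B_0)$, which does not hold in general; Brualdi's symmetric exchange gives a valid $f$ for each $e$ but not a bijection, and the one-sided perfect-matching lemma gives a bijection but not the two-sided property.

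Here is a concrete counterexample showing the whole plan is obstructed. Take the graphic matroid of $K_4$ on vertices $\{1,2,3,4\}$ with edge set $E=\{12,13,14,23,24,34\}$. Put $B_0=\{13,14,24\}$, $B_1=B_0-14+23=\{13,23,24\}$ (so $x_0=14$, $y_0=23$), $k=2$, and $w=\ones_{\{12,14,34\}}$; then $B^{**}=\{12,14,34\}$ is the unique maximizer of $w$ over $\cB_2(B_0)$ with $w(B^{**})=3$. We have $y_0\notin B^{**}$, $x_0\in B^{**}$, $|B^{**}\setminus B_0|=2=k$, and $\tilde B=B^{**}-x_0+y_0=\{12,23,34\}$ is a spanning tree, so this is exactly your troublesome sub-case. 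One can check that applying \zcref{thm:main} to $(A,B,X,Y)=(B_1,B^{**},\{y_0\},\{x_0\})$ admits no solution with $|U|=|V|=2$, so the sub-case is genuinely reached. The symmetrically exchangeable pairs between $\tilde B$ and $B_0$ are precisely $(12,14)$, $(23,13)$, $(23,14)$, $(23,24)$, $(34,14)$: every one uses $e=y_0=23$ or $f=x_0=14$, so no admissible $(e,f)$ exists and your pigeonhole has nothing to land on. In fact the multiset plan itself is impossible here: inspecting all partitions of $E$ into two spanning trees, none satisfies $|B^{(1)}\setminus B_0|\le 1$ and simultaneously $|B^*\setminus B_1|\le 1$. (The lemma itself of course still holds: $\max\Set{w(B)}{B\in\cB_1(B_1)}=1$, $\max\Set{w(B)}{B\in\cB_2(B_0)}=3$, and $\err(B_1\given B_0)=2$, so $1\ge 3-2$ with equality.) To fix the argument you should drop the insistence on an exact multiset identity in this sub-case and follow the paper's route: pick $a^*\in\argmin\Set{w(a)}{a\in B^{**}\setminus B_0}$, exchange $a^*$ out of $\tilde B$ against some $b\in B_1\setminus\tilde B$, then exchange $b$ out of $B_0$ against some $a\in B^{**}\setminus B_0$, and use $w(a)\ge w(a^*)$ to absorb the slack.
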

\begin{proof}
    Take $A \in \argmax\Set{w(B')}{B' \in \mathcal{B}_k(B)}$. Using the definition of $\err(B_1\given B_0)$, we have that~\eqref{eq:lem-robust} is equivalent to
    \begin{align}\label{eq:robust}
        \max\Set{w(B)}{B \in \cB_{k-1}(B_1)} + \max\Set{w(B)}{B \in \cB_1(B_0)} \ge w(A)+w(B_1).
    \end{align}

    We first consider the case where $B_0 = B_1$.
    If $B_0 = B_1 = A$,~\eqref{eq:robust} trivially holds.
    If $B_0 = B_1 \ne A$, then by the symmetric exchange property for $A$ and $B_0=B_1$, there exist elements $a \in A\setminus B_1$ and $b \in B_1\setminus A$ such that $A-a+b$ and $B_1+a-b$ are bases.
    Then, $|(A-a+b)\setminus B_1| = |A\setminus B_0|-1 \le k-1$ and $|(B_1+a-b)\setminus B_0| = 1$, thus~\eqref{eq:robust} holds by \[w(A)+w(B_1) = w(A-a+b)+w(B_1+a-b).\]  
    
    Suppose $B_0 \ne B_1$ and let $x, y$ be the elements such that $B_0 \setminus B_1 = \set{x}$ and $B_1 \setminus B_0 = \set{y}$. 
    Assume first that $y \in A$.
    If $x \not \in A$, then $A \in \cB_{k-1}(B_1)$ and $B_1 \in \cB_1(B_0)$, thus~\eqref{eq:robust} clearly holds.
    Otherwise, if $x \in A$, then by the symmetric exchange property for $B_1$ and $A$, there exists $b \in B_1 \setminus A$ such that $A-x+b$ and $B_1+x-b$ are bases.
    As $(B_1+x-b)\setminus B_0 = \set{y}$, we have $B_1+x-b\in \cB_1(B_0)$.
    As $(A-x+b)\setminus B_1 = (A\setminus B_0)-y$ and $|A\setminus B_0|\le k$, we have $A-x+b\in \cB_{k-1}(B_1)$.
    Therefore,~\eqref{eq:robust} holds by \[w(A)+w(B_1) = w(A-x+b)+w(B_1+x-b).\]

    Assume now that $x \notin A \not\ni y$.
    By the symmetric exchange property for $B_1$ and $A$ again, there exists $a \in A\setminus B_1$ such that $B_1-y+a$ and $A+y-a$ are bases.
    As $(B_1-y+a)\setminus B_0 = \set{a}$, we have $B_1-y+a \in \cB_1(B_0)$.
    As $(A+y-a)\setminus B_1 = (A\setminus B_0)-a$ and $|A\setminus B_0| \le k$, we have $A+y-a \in \cB_{k-1}(B_1)$.
    Therefore,~\eqref{eq:robust} holds by
    \[w(A)+w(B_1) = w(A+y-a)+w(B_1-y+a).\]

    It remains to consider the case when $x \in A \not\ni y$.
    By \zcref{thm:main} applied to $A,B=B_1,X=\set{x}$, and $Y=\set{y}$, there exist $U \subseteq A \setminus B_1$ and $V \subseteq B_1 \setminus A$ such that $x \in U$, $y \in V$, $|U|=|V|\le 2$, and $A-U+V$ and $B_1+U-V$ are bases.

    Suppose $|U|=|V|=1$, i.e., $A-x+y \in \cB$.
    Let $a^*$ be an element of minimum weight in $A \setminus B_0 = (A \setminus B_1)-x$.
    Since $A-x+y$ and $B_1$ are bases and $a^* \in (A-x+y) \setminus B_1$, there exists $b \in B_1 \setminus (A-x+y)$ such that $A-\set{x,a^*}+\set{y,b}$ is a basis.
    As $\set{y, b} \subseteq B_1$ and $y \ne b$, we have $|(A-\set{x,a^*}+\set{y,b})\setminus B_1| \le |A\setminus B_1|-2 \le |A \setminus B_0|-1 \le k-1$, hence $A-\set{x,a^*}+\set{y,b} \in \cB_{k-1}(B_1)$. 
    As $A$ is a basis and $b \in B_1 \setminus (A-x+y) = B_0 \setminus A$, there exists $a \in A \setminus B_0$ such that $B_0-b+a$ is a basis.
    Using that $A-\set{x,a^*}+\set{y,b} \in \cB_{k-1}(B_1)$ and $B_0-b+a \in \cB_1(B_0)$, we get
    \begin{align}
        & \max\Set{w(B)}{B \in \cB_{k-1}(B_1)} + \max\Set{w(B)}{B \in \cB_1(B_0)} \\
        & \quad \ge w(A-\set{x,a^*}+\set{y,b}) + w(B_0-b+a) \\
        & \quad = w(A) + w(B_1) - w(a^*) + w(a) \\
        & \quad \ge w(A) + w(B_1),
    \end{align}
    proving \eqref{eq:robust}.

    Lastly, suppose the case when $|U|=|V|=2$.
    Note that $B_1+U-V = B_0+a-b \in \cB_1(B_0)$, where $a$ and $b$ are the elements such that $U = \set{x,a}$ and $V = \set{y,b}$.
    As $V \subseteq B_1$, we have $|(A-U+V) \setminus B_1| = |A \setminus B_1|-|V| \le k-1$, hence $A-U+V \in \cB_{k-1}(B_1)$.
    Thus, we get
    \begin{align}
        &\max\Set{w(B)}{B \in \cB_{k-1}(B_1)} + \max\Set{w(B)}{B \in \cB_1(B_0)} \\
        &\quad \ge w(A-U+V) + w(B_1+U-V) \\
        &\quad = w(A)+w(B_1),
    \end{align}
    showing~\eqref{eq:robust}.
\end{proof}

\begin{proof}[{Proof of \zcref{thm:robustness}}]
    By repeated applications of \zcref{lem:robust}, we have
    \begin{align}
        w(B_k)
        &= \max\Set{w(B)}{B \in \cB_0(B_k)} \\
        &\ge \max\Set{w(B)}{B \in \cB_1(B_{k-1})} - \err(B_k \given B_{k-1}) \\
        &\ge \max\Set{w(B)}{B \in \cB_2(B_{k-2})} - \err(B_{k-1} \given B_{k-2}) - \err(B_k \given B_{k-1}) \\
        &\ge \dotsb \ge \max\Set{w(B)}{B \in \cB_k(B_0)} - \sum_{i=1}^k \err(B_i \given B_{i-1}),
    \end{align}
    as required.
\end{proof}

\section{Harmonicity of Representable Matroids}\label{sec:harmonic}

In this section, we introduce harmonic functions in \zcref{subsec:harmonic} and show in \zcref{sec:harmonicity-of-products} that the products of determinants associated with pairs of bases are harmonic.
We then exploit this harmonicity to prove a reconfiguration version of the multiple exchange property for representable matroids in \zcref{sec:reconf-mep} and a weighted equitability theorem in \zcref{sec:weighted-equitability}.

\subsection{Harmonic Functions}\label{subsec:harmonic}

This section introduces harmonic functions of set functions.
In the literature, harmonic functions are defined for more general settings in terms of multilinear polynomials; see~\cite{filmus2016an,filmus2019harmonicity} for details.

Let $r \in \Z_{\ge 0}$, $E$ a set with $|E|=2r$, and $(A,B)$ an equal bipartition of $E$.
Recall that $\cX_{A,B}$ denotes $\Set{(X,Y)}{X\subseteq A,\ Y\subseteq B,\ |X|=|Y|}$.
Then, the map $(X,Y) \mapsto A-X+Y$ gives a bijection from  $\cX_{A,B}$ to $\tbinom{E}{r}$, with inverse $S \mapsto (A\setminus S,\ B\cap S)$.
Given a function $h\colon \cX_{A,B} \to \F$ that takes values in a field $\F$, we associate to it the function $\hat h \colon \tbinom{E}{r} \to \F$ defined by
\begin{align}
    \hat h(S)
    \coloneqq
    {(-1)}^{|B \cap S|}h(A \setminus S, B \cap S).
\end{align}

We say that a function $g \colon \binom{E}{r} \to \mathbb{F}$ is
\emph{harmonic} if
\begin{align}\label{def:harmonic}
    \sum_{e \in E \setminus T} g(T+e) = 0
\end{align}
for every $T \in \binom{E}{r-1}$ (see~\cite{filmus2016an,filmus2019harmonicity}).
We also say that a function $h\colon \cX_{A,B} \to \mathbb{F}$ is
\emph{harmonic} if $\hat h$ is harmonic; equivalently,
\begin{align}\label{def:harmonic-change}
    \sum_{a\in X} h(X-a,Y)
    =
    \sum_{b\in B\setminus Y} h(X,Y+b)
\end{align}
for every $X \subseteq A$ and $Y\subseteq B$ such that $|X|=|Y|+1$.

\subsection{Harmonicity of Products of Determinants}\label{sec:harmonicity-of-products}

In this section, we see that a harmonic function arises from certain products of determinants.
This fact is known as the \emph{Plücker relations}; see, e.g.,~\cite{gelfand2008discriminants}.
For completeness, we provide a proof with exact sign computations using our notations.

Let $K \in \F^{r \times n}$  be an $r \times n$ matrix over $\F$.
Recall that $K[A]$ denotes the submatrix of $K$ with columns indexed by $A \subseteq [n]$.
Let $X = \set{x_1, \dotsc, x_k} \subseteq A$ and $Y = \set{y_1, \dotsc, y_k} \subseteq [n] - A$ with $x_1 < \dotsb < x_k$ and $y_1 < \dotsb < y_k$.
Let $K[A; -X, +Y]$ and $K[A; +Y, -X]$ denote the matrix obtained from $K$ by selecting the columns indexed by $A-X+Y$, with column $y_i$ placed in the position of $x_i$ in $A$ for $i \in [k]$.
For $A, B \in \binom{[n]}{r}$, we define $\mu_{A,B}^K \colon \cX_{A \setminus B, B \setminus A} \to \F$ by
\begin{align}
    \mu^K_{A,B}(X, Y) \coloneqq \det K[A; -X, +Y] \det K[B; +X, -Y]
\end{align}
for $(X,Y) \in \cX_{A \setminus B, B \setminus A}$.
Note that $\mu^K_{A,B}(\emptyset, \emptyset) = \mu^K_{A,B}(A\setminus B, B\setminus A) = \det K[A] \det K[B]$, and $\hat \mu_{A,B}^K(S)$ is equal to $\det K[(A \cap B) + S] \det K[(A \cup B) - S]$ up to sign.

\begin{theorem}[{Plücker relations; see~\cite{gelfand2008discriminants}}]\label{thm:gp-harmonic}
    Let $K \in \F^{r \times n}$ be a matrix over a field $\F$ and $A,B \in \binom{[n]}{r}$.
    Then, $\mu^K_{A,B}$ is harmonic.
\end{theorem}

To show \zcref{thm:gp-harmonic}, we use the determinant identity called the \emph{generalized Laplace expansion}.
For $A = \set{a_1, \dotsc, a_r} \subseteq [n]$ with $a_1 < \dotsb < a_r$ and $U = \set{a_{i_1}, \dotsc, a_{i_k}} \subseteq A$ with $a_{i_1} < \dotsb < a_{i_k}$, let $\sgn_A(U) \in \set{+1, -1}$ denote the sign of the permutation
\begin{align}
    \begin{pNiceMatrix}
      a_1 & \Cdots & a_k & a_{k+1} & \Cdots & a_r \\
      a_{i_1} & \Cdots & a_{i_k} & a_{i_{k+1}} & \Cdots & a_{i_r}
    \end{pNiceMatrix},
\end{align}
where $A-U = \set{a_{i_{k+1}}, \dotsc, a_{i_r}}$ with $a_{i_{k+1}} < \dotsb < a_{i_r}$.

\begin{theorem}[{Generalized Laplace expansion; see~\cite[Proposition~2.1.3]{murota2010matrices}}] \label{thm:laplace}
    For a square matrix $L = \binom{L_1}{L_2} \in \F^{m \times m}$ with $L_1 \in \F^{k \times m}$ and $L_2 \in \F^{(m-k) \times m}$, we have
    \begin{align}
        \det L = \sum_{J \in \binom{[m]}{k}} \sgn_{[m]}(J) \det L_1[J] \det L_2[[m]-J].
    \end{align}
\end{theorem}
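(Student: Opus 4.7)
The plan is to derive the identity directly from the Leibniz expansion $\det L = \sum_{\sigma \in S_m} \sgn(\sigma) \prod_{i=1}^m L_{i,\sigma(i)}$ by grouping the permutations of $[m]$ according to their image on $[k]$. Every $\sigma \in S_m$ maps $[k]$ bijectively onto a unique $k$-subset $J = \sigma([k]) \in \binom{[m]}{k}$ and maps $[m]-[k]$ bijectively onto $[m]-J$, so the outer sum over $\sigma$ splits naturally as a sum over $J \in \binom{[m]}{k}$ followed by an inner double sum over the two block-bijections.

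For a fixed $J = \{j_1 < \dotsb < j_k\}$ with complement $[m]-J = \{j_{k+1} < \dotsb < j_m\}$, I would fix the canonical permutation $\tau_J \in S_m$ defined by $\tau_J(i) = j_i$ for all $i \in [m]$; by the very definition of $\sgn_{[m]}(J)$, we have $\sgn(\tau_J) = \sgn_{[m]}(J)$. Any $\sigma$ with $\sigma([k]) = J$ then factors uniquely as $\sigma = \tau_J \circ (\rho_1 \oplus \rho_2)$ for some $\rho_1 \in S_k$ and $\rho_2 \in S_{m-k}$ acting on the two blocks, with $\sgn(\sigma) = \sgn_{[m]}(J)\,\sgn(\rho_1)\,\sgn(\rho_2)$ by multiplicativity. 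In parallel, the monomial $\prod_{i=1}^m L_{i,\sigma(i)}$ factors as $\prod_{i=1}^k (L_1[J])_{i,\rho_1(i)} \cdot \prod_{i=1}^{m-k} (L_2[[m]-J])_{i,\rho_2(i)}$, using that $L_1[J]$ and $L_2[[m]-J]$ are indexed by the ordered elements of $J$ and $[m]-J$, respectively.

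Substituting and summing over $\rho_1$ and $\rho_2$ independently, the inner double sum factors by another application of Leibniz's formula into $\det L_1[J] \cdot \det L_2[[m]-J]$, and combining with the outer sum over $J$ yields the claimed identity. The only delicate step is the sign bookkeeping in $\sgn(\sigma) = \sgn(\tau_J)\sgn(\rho_1)\sgn(\rho_2)$, which is immediate once the definitions are aligned; the rest is a routine reindexing. Beyond this, no additional ideas are needed, and the proof goes through over any commutative ring.
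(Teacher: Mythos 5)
Your proof is correct: partitioning $S_m$ by the image $J=\sigma([k])$, factoring each such $\sigma$ as $\tau_J\circ(\rho_1\oplus\rho_2)$, and observing that $\sgn(\tau_J)=\sgn_{[m]}(J)$ by definition makes the Leibniz sum factor exactly as claimed, and the argument indeed works over any commutative ring. The paper itself gives no proof of this statement --- it simply cites Murota's book --- so there is nothing to compare against; what you have written is the standard textbook derivation.
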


\begin{proof}[{Proof of \zcref{thm:gp-harmonic}}]
    For $S \subseteq [n]$, define $K_S \in \F^{r \times n}$ as the matrix obtained from $K$ by zeroing out all columns indexed by $[n]-S$.
    Let $X \subseteq A \setminus B$ and $Y \subseteq B \setminus A$ be sets such that $|X|=|Y|+1$.
    Applying the generalized Laplace expansion to the following $2r \times 2r$ matrix
    \begin{align}
        L \coloneqq \begin{pmatrix}
            K[A] & K[B] \\
            K_X[A] & K_{B-Y}[B]
        \end{pmatrix},
    \end{align}
    we obtain
    \begin{align}\label{eq:det-L}
        \det L =
        \sum_{J \in \binom{[2r]}{r}} \sgn_{[2r]}(J)
          \det \prn*{\begin{pmatrix} K[A] & K[B]\end{pmatrix}[J]}
          \det \prn*{\begin{pmatrix} K_X[A] & K_{B-Y}[B]\end{pmatrix}[[2r]-J]}.
    \end{align}
    For each $J$ in~\eqref{eq:det-L}, we let $U \coloneqq \Set{a_i}{i \in [r] \setminus J}$ and $V \coloneqq \Set{b_{j-r}}{j \in J \cap \set{r+1, \dotsc, 2r}}$, where $A = \set{a_1, \dotsc, a_r}$ and $B = \set{b_1, \dotsc, b_r}$.
    Note that $|U| = |V|$ holds.
    Then, we have
    \begin{gather}
        \sgn_{[2r]}(J) = {(-1)}^{|U||V|} \sgn_A(A-U) \sgn_B(V) = {(-1)}^{|U|} \sgn_A(A-U) \sgn_B(V), \\
        \begin{aligned}
        \det \prn*{\begin{pmatrix} K[A] & K[B]\end{pmatrix}[J]}
            &= \begin{cases}
              \det \begin{pmatrix} K[A-U] & K[V]\end{pmatrix} & ((A-U)\cap V = \emptyset), \\
              0 & (\text{otherwise}),
            \end{cases} \\
            \det \prn*{\begin{pmatrix} K_X[A] & K_{B-Y}[B]\end{pmatrix}[[2r]-J]}
            &= \det \begin{pmatrix} K_X[U] & K_{B-Y}[B-V]\end{pmatrix} \\
            &= \begin{cases}
              \det \begin{pmatrix} K[U] & K[B-V] \end{pmatrix} & (U \subseteq X, \, V \supseteq Y), \\
              0 & (\text{otherwise}).
            \end{cases}
        \end{aligned}
    \end{gather}
    We also have
    \begin{align}
        \sgn_A(A-U) \det \begin{pmatrix} K[A-U] & K[V]\end{pmatrix} = \det K[A; -U, +V], \\
        \sgn_B(V) \det \begin{pmatrix} K[U] & K[B-V]\end{pmatrix} = \det K[B; +U, -V].
    \end{align}
    Therefore,
    \begin{align}
        \det L
        &= \sum_{\substack{
            U \subseteq X,\\
            V \subseteq B \setminus A \::\: Y \subseteq V, |U|=|V|
        }} {(-1)}^{|U|} \mu_{A,B}^K(U, V) \\
        &= {(-1)}^{|X|-1} \sum_{a \in X} \mu_{A,B}^K(X-a, Y) + {(-1)}^{|X|} \sum_{b \in (B \setminus A)-Y} \mu_{A,B}^K(X, Y+b).
    \end{align}
    
    Finally, we show $\det L = 0$, completing the proof.
    By elementary row operations, we have
    \begin{align}
        \det L = \det \begin{pmatrix}
            K[A] & K[B] \\
            K_X[A] & K_{B-Y}[B]
        \end{pmatrix}
        = \det \begin{pmatrix}
            K_{A-X}[A] & K_Y[B] \\
            K_X[A] & K_{B-Y}[B] 
        \end{pmatrix}.
    \end{align}
    Since $\begin{pmatrix}
        K_{A-X}[A] & K_Y[B]
    \end{pmatrix}$ contains $|X|+|B-Y| = r+|X|-|Y| > r$ many zero columns, $L$ is singular, as required.
\end{proof}

\subsection{A Reconfiguration Version of the Multiple Exchange Property}\label{sec:reconf-mep}

In this section, we show \zcref{thm:reconf-mep}, extending the multiple exchange property with a statement on the existence of a certain sequence of symmetric exchanges under a characteristic constraint.
For the proof, we first show the following statement on harmonic functions.

\begin{lemma} \label{lem:harmonic-mep}
    Let $E$ be a set, $A, B \subseteq E$ with $|A|=|B|$ and $X\subseteq A\setminus B$.
    Let $\F$ be a field such that $\chr(\F)\not \in \set{2,3,\dots, |X|}$, and let $h\colon \cX_{A\setminus B, B\setminus A} \to \F$ be a harmonic function.
    Then, 
    \[h(\emptyset, \emptyset) = \sum_{Y \in \binom{B\setminus A}{|X|}} h(X,Y).\]
\end{lemma}
\begin{proof}
    We use induction on $k\coloneqq |X|$.
    There is nothing to prove if $k=0$.
    If $k \ge 1$, then by rearranging the sum, applying the harmonicity of $h$, and using the induction hypothesis for some sets of size $k-1$, we get 
    \begin{align}
        k \cdot \sum_{Y \in \binom{B\setminus A}{k}} h(X, Y) & = 
        \sum_{V \in \binom{B\setminus A}{k-1}}\sum_{b \in (B\setminus A) \setminus V} h(X, V+b)  = \sum_{V \in \binom{B\setminus A}{k-1}}\sum_{a \in X} h(X-a, V) \\  &= \sum_{a \in X} \sum_{V \in \binom{B\setminus A}{k-1}} h(X-a, V) = \sum_{a \in X} h(\emptyset, \emptyset) = k \cdot h(\emptyset, \emptyset).
    \end{align}
    Since $\chr(\F)=0$ or $\chr(\F)> k$, the desired equality follows.
\end{proof}

For $r\in \Z_{\ge 0}$ and a set $E$, the \emph{Johnson graph} $J(E,r)$ is defined as the graph on vertices $\binom{E}{r}$ with edge set $\Set{\set{S,S'}}{S, S' \in \binom{E}{r}, |S\setminus S'|=|S'\setminus S|=1}$.
We show the following lemma.

\begin{lemma}\label{lem:component-harmonicity}
    Let $r\in \Z_{\ge 0}$, $E$ a set with $|E|=2r$, $\F$ a field, and $g\colon \binom{E}{r}\to \F$ a harmonic function.
    Let $\cC\subseteq \supp(g)$ be the vertex set of a connected component of the subgraph of $J(E,r)$ induced by $\supp(g)$.
    Then, the function $g_{\cC}\colon \binom{E}{r}\to \F$ defined by
    \begin{align}\label{def:g-C}
        g_{\cC}(S)\coloneqq\begin{cases} g(S) & \text{($S\in \cC$)}, \\ 0 & \text{(otherwise)} \end{cases}
    \end{align}
    is harmonic.
\end{lemma}
\begin{proof}
    Let $T \in \binom{E}{r-1}$.
    If $\Set{T+e}{e\in E\setminus T}$ is disjoint from $\cC$, then $\sum_{e \in E\setminus T} g_\cC(T+e)=0$ as each term is zero.
    Otherwise, since $\Set{T+e}{e \in E\setminus T}$ induces a clique in $J(E, r)$, we have $\Set{T+e}{e \in E\setminus T, T+e\in \supp(g)}\subseteq \cC$, hence $\sum_{e \in E\setminus T} g_\cC(T+e)=\sum_{e \in E\setminus T} g(T+e)=0$.
    This shows the harmonicity of $g_\cC$.
\end{proof}

We are ready to prove \zcref{thm:reconf-mep} which we restate here for convenience.

\reconfmep*

\begin{proof}
    Let $K\in \F^{r \times n}$ be a matrix representing $\bM$.
    Define $h\colon \cX_{A\setminus B, B\setminus A} \to \F$ by $h(X,Y) = \mu^K_{A,B}(X,Y)$ if $(A-X+Y,B+X-Y)$ can be obtained from $(A,B)$ by a sequence of symmetric exchanges in $\bM$, and $h(X,Y) = 0$ otherwise.
    As $h_0\coloneqq \mu^K_{A,B}$ is harmonic by \zcref{thm:gp-harmonic}, $h$ is harmonic as well by \zcref{lem:component-harmonicity} applied to $\hat{h}_0$.
    Then, \zcref{lem:harmonic-mep} shows that $h(\emptyset, \emptyset) = \sum_{Y \in \binom{B\setminus A}{|X|}} h(X,Y)$, hence $h(\emptyset, \emptyset) \ne 0$ implies that $h(X, Y)\ne 0$ for some $Y\in \binom{B\setminus A}{|X|}$, finishing the proof by the definition of $h$.
\end{proof}

\begin{corollary} \label{cor:weak-gabow}
    Let $A$ and $B$ be bases of a matroid $\bM$ representable over a field $\F$ of characteristic zero.
    Then, the basis pair $(B,A)$ can be obtained from $(A,B)$ by a sequence of symmetric exchanges in $\bM$.
\end{corollary}

\subsection{An Application to Weighted Equitability}\label{sec:weighted-equitability}

This section shows \zcref{thm:weighted-equitability}, restated below for convenience.

\weightedequitability*

\begin{proof}
    Assume first that $k=2$ and let $(A,A')$ be a partition of $E$ into two bases.
    We may assume that $w(A)>w(A')$.
    By \zcref{cor:weak-gabow}, there exists a sequence $(A_0, A'_0), (A_1, A'_1), \dots, (A_l, A'_l)$ of basis pairs such that $(A_0, A'_0)=(A,A')$, $(A_l, A'_l) = (A',A)$, and consecutive pairs differ by a symmetric exchange.
    Let $i$ be the smallest index such that $w(A_i) \le w(A'_i)$.
    As $w(A_0)>w(A'_0)$ and $w(A_l)<w(A'_l)$, such an index $i$ exists and $i\ge 1$ holds.
    Let $x$ and $y$ be the unique elements such that $(A_i, A'_i) = (A_{i-1}-x+y, A'_{i-1}+x-y)$.
    If $w(A'_{i-1}) \ge w(A_{i-1}) - \max_{e \in A_{i-1}} w(e)$, then the partition $(A_{i-1}, A'_{i-1})$ satisfies $w(A_{i-1}) \ge w(A'_{i-1}) \ge w(A_{i-1}) - \max_{e \in A_{i-1}} w(e)$.
    Otherwise, we have $w(A'_{i-1}) < w(A_{i-1}) - \max_{e \in A_{i-1}} w(e) \le w(A_{i-1})-w(x)$, hence 
    \[0 \le w(A'_i)-w(A_i) = w(A'_{i-1})-w(A_{i-1})+2(w(x)-w(y)) < w(x)-2w(y) \le w(x) \le \max_{e \in A'_i} w(e).\]
    Therefore, the partition $(A'_i, A_i)$ satisfies the requirements of the theorem.
    
    Consider now the case $k \ge 3$.
    Let $\bar w \coloneqq w(E)/k$ and  $\cA\coloneqq (A_1,\dots, A_k)$ a partition into bases such that $\phi(\cA)\coloneqq \sum^k_{i=1}|w(A_i)-\bar{w}|$ is minimal.
    We may assume that $w(A_1)\ge \dots \ge w(A_k)$ and $\phi(\cA)>0$.
    By applying the previous $k=2$ case to the restriction of $\bM$ to $A_1\cup A_k$, there exists a partition $(\tilde{A}_1, \tilde{A}_k)$  of $A_1 \cup A_k$ such that $w(\tilde{A}_1) \ge w(\tilde{A}_k)\ge w(\tilde{A}_1)-\max_{e \in \tilde{A}_1} w(e)$.
    Let $\tilde{A}_i\coloneqq A_i$ for $i \in \set{2,3,\dots, k-1}$ and $\tilde{\cA}\coloneqq (\tilde{A}_1,\dots, \tilde{A}_k)$.
    If $w(\tilde{A}_1)\ge w(A_1)$, then 
    \[w(\tilde{A}_1)\ge w(A_1)\ge w(A_2)\ge \dots \ge w(A_k) \ge w(\tilde{A}_k) \ge w(\tilde{A}_1)-\max_{e \in \tilde{A}_1} w(e),\]
    hence $\tilde{\cA}$ satisfies the requirements of the theorem.
    Otherwise, we have $w(A_1) > w(\tilde{A}_1)$ and $w(\tilde{A}_k) > w(A_k)$.
    We claim that $\phi(\tilde{\cA}) < \phi(\cA)$, contradicting the choice of $\cA$.
    As $w(A_1)> \bar{w} > w(A_k)$, we have $|w(A_1)-\bar{w}| + |w(A_k)-\bar{w}| = w(A_1)-w(A_k)$.
    If $w(\tilde{A}_1) \ge \bar{w} \ge w(\tilde{A}_k)$, then $|w(\tilde{A}_1)-\bar{w}| + |w(\tilde{A}_k)-\bar{w}| = w(\tilde{A}_1)-w(\tilde{A}_k)< w(A_1)-w(A_k)$, showing $\phi(\tilde{\cA})<\phi(\cA)$.
    Otherwise, we may assume by symmetry that $w(\tilde{A}_1)\ge w(\tilde{A}_k) > \bar{w}$, in which case \[|w(\tilde{A}_1)-\bar{w}|+|w(\tilde{A}_k)-\bar{w}| = w(\tilde{A}_1)+w(\tilde{A}_k)-2\bar{w} = w(A_1)+w(A_k)-2\bar{w} < w(A_1)-w(A_k),\]
    showing $\phi(\tilde{\cA})<\phi(\cA)$.
    This finishes the proof of the existence statement (i).
    
    To give a randomized algorithm in (ii), first we observe that $k=2$ can be assumed.
    Indeed, starting from an arbitrary partition $\cA$ into $k$ bases found by a matroid union algorithm, the previous argument shows that the application of the $k=2$ case either yields a desired partition or a partition $\tilde{\cA}$ with $\phi(\tilde{\cA})<\phi(\cA)$.
    As $\phi(\cA)\le w(E)$ and $k\cdot \phi(\cA) \in \Z$, we need to apply the $k=2$ case at most $k\cdot  w(E)\le {|E|}^2 \cdot W$ times.
    Therefore, we are left to deal with the $k=2$ case.
    For each $e \in E$, the problem of finding a partition $(A_1, A_2)$ of $E$ with $e \in \argmax \Set{w(f)}{f \in A_1}$ and $w(A_1) \ge w(A_2) \ge w(A_1)-w(e)$ reduces to an instance of the problem given in \zcref{prop:exact}.
    Namely, set $F_e\coloneqq \Set{f \in E}{w(f) >w(e)}$ and observe that no desired partition $(A_1, A_2)$ exists if $F_e$ is dependent as $F_e\subseteq A_2$ must hold.
    If $F_e$ is independent, then the problem is equivalent to finding a common basis of $\bM/\set{e}\backslash F_e$ and $((\bM/F_e)\backslash \set{e})^*$ such that its weight with respect to the restriction of $w$ to $E-F_e-e$ is in the set $\cT \coloneqq \Set{\alpha \in \Z_{\ge 0}}{\alpha + w(e) \ge w(E)-(\alpha+w(e))\ge \alpha}$.
    Collecting this instance for each $e\in E$ with $F_e$ being independent, we have at most $|E|$ instances such that at least one of them is solvable by (i) and its solution yields a feasible partition $(A_1, A_2)$.
    Therefore, \zcref{prop:exact} yields a randomized algorithm that finds a feasible partition in polynomial time in $|E|$ and $W$ with probability at least $\frac12$.
    To obtain a Las Vegas algorithm, we repeatedly run this randomized algorithm until it returns a feasible partition.
    Since a feasible partition is guaranteed to exist, the expected number of runs is at most $2$.
\end{proof}

\section{Generalizing the Grassmann--Plücker Identity}\label{sec:gp}

In this section, we provide a framework for deriving extensions of the Grassmann--Plücker identity~\eqref{eq:multiple-gp}.
\zcref{sec:preliminaries-harmonic} describes additional preliminaries on harmonic functions.
Building on this, we present the framework in \zcref{sec:deriving}, and in \zcref{sec:applications}, we use it to provide several exchange properties and their reconfiguration versions.
Algorithmic aspects are discussed in \zcref{sec:rand-algo}.
Finally, \zcref{sec:gp-applications} presents several applications.

In this section, we define binomial coefficients in a generalized sense, i.e., 
\[
    \tbinom{n}{k} \coloneqq \begin{cases}
        \frac{n (n-1) \dotsm (n-k+1)}{k!} & (k \ge 0), \\
        0 & (k < 0)
    \end{cases}
\]
for $n, k \in \Z$.
Note that $\binom{n}{k} = 0$ if and only if $k < 0$ or $0 \le n < k$.

\subsection{Basis of the Harmonic Functions}\label{sec:preliminaries-harmonic}

The proofs of \zcref{lem:dimension} and the first part of \zcref{lem:basis-expansion} largely follow~\cite{filmus2016an,filmus2019harmonicity}.
We provide proofs specializing to our cases for completeness.

Let $r \in \Zp$, $E$ a set with $|E|=2r$, and $(A,B)$ be an equal bipartition of $E$.
Let $\F$ be a field.
Recall that $g \colon \tbinom{E}{r} \to \F$ and $h \colon \cX_{A,B} \to \F$ are harmonic if they satisfy~\eqref{def:harmonic} and~\eqref{def:harmonic-change}, respectively.

We first introduce a family of harmonic functions arising from bijections from $A$ to $B$, which play a key role in our framework.
Let $\Phi_{A, B}$ denote the set of all bijections from $A$ to $B$.
For a bijection $\varphi \in \Phi_{A,B}$, define $\delta_\varphi\colon \cX_{A,B} \to \F$ by
\begin{align}
    \delta_\varphi(X,Y)
    \coloneqq
    \begin{cases}
        1 & (\varphi(X)=Y),\\
        0 & (\text{otherwise})
    \end{cases}
\end{align}
for $(X,Y) \in \cX_{A,B}$.
It is easy to see that $\delta_\varphi$ is harmonic.
The associated function $\hat{\delta}_\varphi\colon \tbinom{E}{r} \to \F$ is given by
\begin{align}\label{def:matching-harmonic}
    \hat{\delta}_\varphi(S) = \begin{cases}
        {(-1)}^{|S \cap B|} & (\text{$|S \cap \set{a, \varphi(a)}| = 1$ for all $a \in A$}), \\
        0 & \text{(otherwise)}
    \end{cases}
\end{align}
for $S \in \binom{E}{r}$.

Let $\cH_E(\F)$ be the set of all harmonic functions $g\colon \tbinom{E}{r} \to \F$ on $\tbinom{E}{r}$ with values in $\F$.
We see that if $\F$ has characteristic zero, then the dimension of $\cH_E(\F)$ as a vector space over $\F$ is the $r$th Catalan number.
Note that $\tbinom{0}{-1}$ is defined to be $0$.

\begin{lemma}[{see~\cite{filmus2016an}}]\label{lem:dimension}
    Let $\F$ be a field of characteristic zero, $r \in \Zp$, and $E$ a set with $|E|=2r$.
    Then, we have $\dim_\F \cH_E(\F) = \tbinom{2r}{r} - \tbinom{2r}{r-1} = \frac{1}{r+1}\tbinom{2r}{r}$.
\end{lemma}
\begin{proof}
    The claim is clear when $r=0$.
    Suppose $r \ge 1$.
    Since the vector space of functions $\binom{E}{r} \to \F$ has dimension $\tbinom{2r}{r}$ and there are $\binom{2r}{r-1}$ equations of the form~\eqref{def:harmonic}, we have $\dim_\F \cH_E(\F) \ge \tbinom{2r}{r} - \tbinom{2r}{r-1}$.
    In fact, the equations are linearly independent.
    Let $W$ be the inclusion matrix between $(r-1)$-subsets and $r$-subsets of $E$.
    That is, the rows and columns of $W$ are indexed by $\binom{E}{r-1}$ and $\binom{E}{r}$, respectively, and the $(T,S)$ entry is $1$ if $T\subseteq S$ and $0$ otherwise.
    Then,~\eqref{def:harmonic} can be identified with the linear system with the coefficient matrix $W$.
    Since $W$ has full-row rank as a matrix over a field of characteristic zero~\cite{gottlieb1966a}, the equations are linearly independent, and hence we obtain $\dim_\F \cH_E(\F) = \tbinom{2r}{r} - \tbinom{2r}{r-1}$. 
\end{proof}

Frankl and Graham~\cite{frankl1989old} showed that the set of functions $\hat{\delta}_\varphi$ defined by~\eqref{def:matching-harmonic} spans $\cH_E(\F)$ in the zero-characteristic case.
This immediately yields expansions of harmonic functions on $\cX_{A,B}$ in terms of $\delta_\varphi$.
Here, we prove a slightly stronger form, which also keeps track of the coefficient ring.

\begin{lemma}\label{lem:basis-expansion}
    Let $\F$ be a field of characteristic zero and $A$ and $B$ be disjoint sets with $|A|=|B|$.
    Then, for any harmonic function $h \colon \cX_{A,B} \to \F$, there exist $\alpha_{\varphi} \in \F$ $(\varphi \in \Phi_{A,B})$ such that $h = \sum_{\varphi \in \Phi_{A,B}} \alpha_{\varphi} \delta_\varphi$.
    In particular, if $h$ takes values of a subring $R$ of $\F$, we can choose $\alpha_{\varphi}$ from $R$ for all $\varphi \in \Phi_{A,B}$.
\end{lemma}

\begin{proof}
    To show the first part of the claim, we first show that $\Set{\hat{\delta}_\varphi}{\varphi \in \Phi_{A,B}}$ spans $\cH_E(\F)$, where $E\coloneqq A+B$, following the proof in~\cite{filmus2016an}.
    Fix orderings of the elements of $A$ and $B$ as $A = \set{a_1, \dotsc, a_r}$ and $B = \set{b_1, \dotsc, b_r}$.
    We say that a bijection $\varphi \in \Phi_{A,B}$ is \emph{non-crossing} if there do not exist two pairs $(a_i,b_j),(a_k,b_l)\in \Set{(a,\varphi(a))}{a \in A}$ such that $i<k \le j<l$.
    Equivalently, when the elements of $A\cup B$ are arranged in the order $a_1,b_1,a_2,b_2,\dots,a_r,b_r$ on a line and each pair is drawn as an arc above the line, no two arcs intersect.
    Let $\Phi^\mathrm{nc} \subseteq \Phi_{A,B}$ be the set of all non-crossing bijections.
    
    We show that $\Set{\hat{\delta}_\varphi}{\varphi \in \Phi^\mathrm{nc}}$ forms a basis of $\cH_E(\F)$, proving the claim.
    It is well-known that the number of non-crossing bijections is equal to the Catalan number.
    Therefore, by \zcref{lem:dimension}, it suffices to show the linear independence of the functions.
    
    We arrange the elements of $E$ in the order $a_1,b_1,a_2,b_2,\dots,a_r,b_r$ and consider the lexicographic order on $\binom{E}{r}$ induced by this ordering.
    For $\varphi \in \Phi^\mathrm{nc}$, define
    \begin{align}\label{eq:leading}
        L_{\varphi} \coloneqq \Set{a_i}{\varphi(a_i)=b_j,\ i \le j} \cup \Set{b_j}{\varphi(a_i)=b_j,\ i>j} \subseteq E.
    \end{align}
    Equivalently, \(L_{\varphi}\) is obtained by choosing, from each pair $(a,\varphi(a))$ $(a\in A)$, the element that appears earlier in the above order.
    Then, $L_{\varphi}$ is the lexicographically smallest set
    $S\in\binom{E}{r}$ such that $\hat{\delta}_\varphi(S)\ne 0$.
    Moreover, distinct non-crossing bijections yield distinct sets
    $L_{\varphi}$.
    Indeed, a non-crossing bijection is uniquely determined by its set of
    left endpoints: given $L_{\varphi}$, regard the elements of $L_{\varphi}$
    as left endpoints and the elements of $E-L_{\varphi}$ as right
    endpoints. Scanning the elements of $E$ from left to right, match each
    right endpoint with the most recent unmatched left endpoint. This usual
    stack-matching procedure recovers the bijection.
    
    Now, let $\varphi, \varphi' \in \Phi^\mathrm{nc}$ such that $L_{\varphi'}$ is lexicographically smaller than $L_{\varphi}$.
    Since $L_{\varphi}$ is the lexicographically smallest set on which $\hat{\delta}_\varphi$ is nonzero, we must have $\hat{\delta}_\varphi(L_{\varphi'})=0$.
    Thus, the matrix $L\coloneqq \prn[\big]{\hat{\delta}_\varphi(L_{\varphi'})}_{\varphi,\varphi' \in \Phi^\mathrm{nc}}$ is triangular with nonzero diagonal entries.
    Therefore, $\Set{\hat{\delta}_\varphi}{\varphi \in \Phi^\mathrm{nc}}$ is linearly independent, and thus $\Set{\hat{\delta}_\varphi}{\varphi \in \Phi_{A,B}}$ spans $\cH_E(\F)$.
    
    This basis yields an expansion of $h$ in terms of $\Set{\delta_\varphi}{\varphi \in \Phi^{\mathrm{nc}}} \subseteq \Set{\delta_\varphi}{\varphi \in \Phi_{A,B}}$.
    More precisely, the coefficients are given by the unique solution ${(\alpha_{\varphi})}_{\varphi \in \Phi^{\mathrm{nc}}}$ of the linear system
    \begin{align}
    \sum_{\varphi' \in \Phi^{\mathrm{nc}}}
    \hat{\delta}_{\varphi'}(L_{\varphi}) \alpha_{\varphi'}
    =
    \hat{h}(L_{\varphi})
    \qquad
    (\varphi \in \Phi^{\mathrm{nc}}),
    \end{align}
    whose coefficient matrix is $L$.
    Moreover, the matrix $L$ is triangular with diagonal entries $\pm 1$.
    Hence $L$ is unimodular, and $L^{-1}$ is again an integer matrix.
    Therefore, each $\alpha_{\varphi}$ can be chosen as an integer linear combination of the values of $h$.
    This proves the second assertion.
\end{proof}

Combining \zcref{lem:basis-expansion,thm:gp-harmonic}, we obtain the following expansion of $\mu_{A,B}^K$ in terms of $\delta_\varphi$.
Note that the following lemma does not impose a condition on the characteristic of $\F$.

\begin{lemma}\label{lem:basis-expansion-mu}
    Let $K \in \F^{r \times n}$ be a matrix over a field $\F$ and $A,B \in \binom{[n]}{r}$.
    Then, there exists $\alpha_{\varphi} \in \F$ $(\varphi \in \Phi_{A\setminus B,B\setminus A})$ such that
    \begin{align}\label{eq:basis-expansion}
        \mu_{A,B}^K(X, Y) = \sum_{\varphi \in \Phi_{A \setminus B, B \setminus A}} \alpha_{\varphi} \delta_\varphi(X, Y)
    \end{align}
    for all $X \subseteq A \setminus B$ and $Y \subseteq B \setminus A$ with $|X|=|Y|$.
\end{lemma}

\begin{proof}
    First, consider a symbolic matrix $K = {(k_{ij})}_{1 \le i \le r, 1 \le j \le n}$.
    We regard $K$ as a matrix over the polynomial ring $R \coloneqq \Z[k_{1,1}, \dotsc, k_{r,n}]$, which is a subring of the rational function field $\Q(k_{1,1}, \dotsc, k_{r,n})$.
    Since $\mu^K_{A,B}$ is harmonic by \zcref{thm:gp-harmonic}, \zcref{lem:basis-expansion} implies that there exists $\alpha_{\varphi} \in R$ $(\varphi \in \Phi_{A \setminus B, B\setminus A})$ such that
    \begin{align}\label{eq:basis-expansion-c}
        \mu^K_{A,B}(X,Y)
        = \sum_{\varphi \in \Phi_{A\setminus B, B\setminus A}} \alpha_{\varphi} \delta_{\varphi}(X, Y)
    \end{align}
    for all $(X,Y) \in \cX_{A \setminus B, B \setminus A}$.
    Then,~\eqref{eq:basis-expansion-c} can be seen as an identity of polynomials in $k_{1,1}, \dotsc, k_{r,n}$ with $\Z$ coefficients.
    Hence, the same identities hold over $\F[k_{1,1}, \dotsc, k_{r,n}]$ for any field $\F$ as well, proving the claim.
\end{proof}

\subsection{Deriving Exchange Properties via Harmonicity}\label{sec:deriving}

We now prove \zcref{thm:gp-main,thm:gp-matroids}, which we restate below.
The former provides a framework for deriving extensions of the Grassmann--Plücker identity, while the latter describes its consequences for exchange properties, including a reconfiguration statement in the characteristic-zero case.

\gpmain*
\gpmatroids*

The following lemma immediately implies \zcref{thm:gp-main} given \zcref{lem:basis-expansion-mu}.

\begin{lemma}\label{lem:h-c}
    Let $\F$ be a field and $A,B$ sets with $|A|=|B|$, and $c\colon \cX_{A\setminus B, B\setminus A} \to \F$ be such that~\eqref{eq:c} holds for every bijection $\varphi\colon A\setminus B \to B\setminus A$.
    Then, for any function $h\colon \cX_{A \setminus B, B \setminus A} \to \F$ written as $h = \sum_{\varphi \in \Phi_{A\setminus B,B\setminus A}} \alpha_\varphi \delta_\varphi$ for some $\alpha_\varphi \in \F$ $(\varphi \in \Phi_{A\setminus B,B\setminus A})$, we have
    \begin{align}
        h(\emptyset, \emptyset) = \sum_{(X,Y) \in \cX_{A\setminus B,B\setminus A}} c(X,Y) h(X,Y).
    \end{align}
\end{lemma}
\begin{proof}
    Since~\eqref{eq:c} can be rewritten as
    \begin{align}
        \sum_{(X,Y) \in \cX_{A\setminus B, B\setminus A}} c(X, Y) \delta_\varphi(X,Y) = 1 = \delta_\varphi(\emptyset, \emptyset),
    \end{align}
    we have
    \begin{align}
        h(\emptyset, \emptyset)
        &= \sum_{\varphi \in \Phi_{A\setminus B, B\setminus A}} \alpha_\varphi \delta_\varphi(\emptyset, \emptyset) \\
        &= \sum_{\varphi \in \Phi_{A\setminus B, B\setminus A}} \alpha_\varphi \sum_{(X,Y) \in \cX_{A\setminus B, B\setminus A}} c(X, Y) \delta_\varphi(X,Y) \\
        &= \sum_{(X,Y) \in \cX_{A\setminus B, B\setminus A}} c(X, Y) \sum_{\varphi \in \Phi_{A\setminus B, B\setminus A}} \alpha_\varphi \delta_\varphi(X,Y) \\
        &= \sum_{(X,Y) \in \cX_{A\setminus B,B\setminus A}} c(X,Y) h(X,Y).
        \qedhere
    \end{align}
\end{proof}

By \zcref{lem:basis-expansion,lem:basis-expansion-mu}, any harmonic function over a field of characteristic zero and $\mu_{A,B}^K$ over a field of any characteristic admit the expansion required in \zcref{lem:h-c}.
In particular, \zcref{thm:gp-main} is proved by \zcref{lem:basis-expansion-mu,lem:h-c}.

Applying \zcref{lem:h-c} to the restriction of $\mu_{A,B}^K$ to a connected component, we further obtain \zcref{thm:gp-matroids}.

\begin{proof}[{Proof of \zcref{thm:gp-matroids}}]
    The first claim is an immediate consequence of \zcref{thm:gp-main}.
    To prove the second claim, let $h \coloneqq \mu_{A,B}^K$, $g \coloneqq \hat h$, and consider the subgraph of the Johnson graph
    $J(A \symdif B, |A\setminus B|)$ induced by $\supp(g)$.
    As with \zcref{lem:component-harmonicity}, let $\cC \subseteq \tbinom{A\symdif B}{|A\setminus B|}$ be the vertex set of the connected component of this subgraph containing $A \setminus B$, and let $g_\cC$ be the function defined by~\eqref{def:g-C} from $g$ and $\cC$.
    By \zcref{lem:component-harmonicity}, $g_\cC$ is harmonic.
    Thus, the function $h_\cC$ such that $\hat h_\cC = g_\cC$ is also harmonic.
    If $\chr(\F)=0$, then \zcref{lem:h-c} applies to $h_\cC$ by \zcref{lem:basis-expansion}, asserting
    \begin{align}
        0 \ne h(\emptyset, \emptyset)
        = h_\cC(\emptyset, \emptyset)
        = \sum_{(X,Y) \in \cX_{A\setminus B,B\setminus A}} c(X,Y) h_\cC(X,Y).
    \end{align}
    This implies that $h_\cC(U,V) \ne 0$ for some $(U,V) \in \supp(c)$.
    Then, $A-U+V$ and $B+U-V$ are bases and $(A\setminus B)-U+V\in \cC$, proving the second claim.
\end{proof}

\subsection{Applications of the Harmonic Framework}\label{sec:applications}

In this section, we apply \zcref{thm:gp-main,thm:gp-matroids} to derive known and new exchange properties for representable matroids, in some cases under characteristic conditions, as well as their reconfiguration versions.
First, in \zcref{sec:known}, we derive Grassmann--Plücker identities corresponding to \zcref{thm:multiple,thm:equitability-exchange,thm:main}, with a weaker upper bound on the common size of $U$ and $V$ for \zcref{thm:main}.
In \zcref{sec:two-max}, we prove \zcref{thm:two-max} without explicitly computing the coefficients of the corresponding Grassmann--Plücker identity and further discuss the consequences of the theorem.
In \zcref{sec:general}, we prove a more general exchange property (\zcref{thm:gp-partition}).

\subsubsection{Deriving Grassmann--Plücker Identities of Exchange Properties}\label{sec:known}

To build an intuition and demonstrate the power of the harmonic framework, we first derive Grassmann--Plücker identities corresponding to \zcref{thm:multiple,thm:equitability-exchange,thm:main} (with a weaker upper bound $|U|=|V|\le|X+Y|$) using \zcref{thm:gp-main}.
By \zcref{thm:gp-matroids}, these imply not only exchange properties but also their reconfiguration extensions for the $\chr(\F)=0$ case.

We first observe that the multiple Grassmann--Plücker identity~\eqref{eq:multiple-gp} is an immediate consequence of \zcref{thm:gp-main}.
For $A,B \in \tbinom{[n]}{r}$ and $X \subseteq A\setminus B$, define $c\colon \cX_{A\setminus B, B\setminus A} \to \F$ by
\begin{align}
    c(U,V) \coloneqq
    \begin{cases}
        1 & (U=X),\\
        0 & (\text{otherwise}).
    \end{cases}
\end{align}
The function $c$ clearly satisfies~\eqref{eq:c} for every bijection $\varphi \in \Phi_{A\setminus B, B\setminus A}$.
Thus, \zcref{thm:gp-main} yields
\begin{align}
    \mu_{A,B}^K(\emptyset, \emptyset)
= \sum_{Y \in \tbinom{B\setminus A}{|X|}} \mu_{A,B}^K(X,Y),
\end{align}
which is precisely~\eqref{eq:multiple-gp}, including the correct signs.
When $\chr(\F)=0$, \zcref{thm:gp-matroids} also implies the reconfiguration property stated in \zcref{thm:reconf-mep}.

We next show the following, which derives \zcref{thm:equitability-exchange} as well as its reconfiguration version for matroids representable over a field of characteristic zero.
%It does not extend to fields of any characteristic since the values of $c$ are rational.

\begin{lemma}\label{lem:c-equitability}
    Let $\F$ be a field of characteristic zero, $A$ and $B$ sets with $|A|=|B|$, $X \subseteq A\setminus B$, and $(Y_1, Y_2)$ a bipartition of $B\setminus A$ with $|X|>|Y_1|$.
    Then, a function $c\colon \cX_{A\setminus B,B\setminus A}\to \F$ defined by
    \begin{align}
        c(U,V) \coloneqq \begin{cases}
            \frac{1}{|U|\tbinom{|X|}{|U|}} & (U \subseteq X,\, |V \cap Y_2|=1), \\
            0 & (\text{otherwise})
        \end{cases}
    \end{align}
    for $(U,V) \in \cX_{A\setminus B,B\setminus A}$ satisfies~\eqref{eq:c} for every $\varphi \in \Phi_{A\setminus B,B\setminus A}$.
\end{lemma}

\begin{comment}
\begin{theorem}
    Let $K \in \F^{r \times n}$ be a matrix over a field $\F$ of characteristic zero.
    Let $A,B \in \tbinom{[n]}{r}$, $X\subseteq A\setminus B$, and $(Y_1, Y_2)$ a bipartition of $B\setminus A$ with $|X|>|Y_1|$.
    Then,
    \begin{align}
        \mu_{A,B}^K(\emptyset, \emptyset)
        = \sum_{(U,V) \in \cX_{X, B\setminus A}\::\: |V \cap Y_2|=1} \frac{1}{|U|\tbinom{|X|}{|U|}} \mu_{A,B}^K(U,V).
    \end{align}
\end{theorem}
\end{comment}

\begin{proof}
    The conditions $U \subseteq X$ and $|\varphi(U) \cap Y_2|=1$ are equivalent to choosing one element from $\varphi(X) \cap Y_2$ and an arbitrary number of elements from $\varphi(X) \cap Y_1$.
    Thus, letting $k \coloneqq |\varphi(X)\cap Y_1|$ and $u \coloneqq |U|$, the left-hand side of~\eqref{eq:c} becomes
    \begin{align}\label{eq:equitability-gp-2}
        \sum_{u=1}^{k+1} (|X|-k) \tbinom{k}{u-1} \frac{1}{u\tbinom{|X|}{u}}
        = \sum_{i=0}^{k} \frac{|X|-k}{|X|-i} \cdot \frac{\tbinom{k}{i}}{\tbinom{|X|}{i}},
    \end{align}
    where we used $\tbinom{n}{m} = \frac{n-m+1}{m}\tbinom{n}{m-1}$ for $n,m \in \Z$ with $m \ne 0$.
    By
    \begin{align}
        \frac{\tbinom{k}{i}}{\tbinom{|X|}{i}}
        - \frac{\tbinom{k}{i+1}}{\tbinom{|X|}{i+1}}
        = \frac{\tbinom{k}{i}}{\tbinom{|X|}{i}}
        - \frac{(k-i)\tbinom{k}{i}}{(|X|-i)\tbinom{|X|}{i}}
        = \frac{|X|-k}{|X|-i} \cdot \frac{\tbinom{k}{i}}{\tbinom{|X|}{i}}
    \end{align}
    for $0 \le i < |X|$, we can rewrite~\eqref{eq:equitability-gp-2} using telescoping as   \begin{align}
        \sum_{i=0}^{k} \frac{|X|-k}{|X|-i} \cdot \frac{\tbinom{k}{i}}{\tbinom{|X|}{i}}
        = \sum_{i=0}^{k} \prn*{\frac{\tbinom{k}{i}}{\tbinom{|X|}{i}} - \frac{\tbinom{k}{i+1}}{\tbinom{|X|}{i+1}}}
        = \frac{\tbinom{k}{0}}{\tbinom{|X|}{0}} - \frac{\tbinom{k}{k+1}}{\tbinom{|X|}{k+1}}
        = 1.
        &\qedhere
    \end{align}
\end{proof}

\begin{corollary}
    Let $K \in \F^{r \times n}$ be a matrix over a field of characteristic zero, $A,B \in \tbinom{[n]}{r}$, $X \subseteq A\setminus B$, and $(Y_1, Y_2)$ a bipartition of $B\setminus A$ with $|X|>|Y_1|$.
    Then,
    \begin{align}
        \mu_{A,B}^K(\emptyset, \emptyset)
        = \sum_{(U,V) \in \cX_{X, B\setminus A} \::\: |V\cap Y_2|=1} \frac{1}{|U|\tbinom{|X|}{|U|}}\mu_{A,B}^K(U,V).
    \end{align}
\end{corollary}

\begin{corollary}\label{cor:reconf-arlv}
  Let $A$ and $B$ be bases of a matroid $\bM$ representable over a field of characteristic zero, $X\subseteq A\setminus B$, and  $(Y_1, Y_2)$ bipartition of $B\setminus A$ with $|X|>|Y_1|$.
  Then, there exist $U \subseteq X$ and $V \subseteq B \setminus A$ such that $|V \cap Y_2|=1$, $A-U+V$ and $B+U-V$ are bases of $\bM$, and $(A-U+V,B+U-V)$ is obtained from $(A,B)$ using symmetric exchanges in $\bM$.
\end{corollary}

Third, we give a function $c$ that yields \zcref{thm:main} with a weaker upper bound on the sizes of $U$ and $V$, and its reconfiguration version for the $\chr(\F)=0$ case.
Note that $\tbinom{|X+Y|-|A\setminus B|}{|X+Y|-|U|} = 0$ if $|U|>|X+Y|$.

\begin{lemma}\label{lem:c-main}
    Let $\F$ be a field, $A$ and $B$ sets with $|A|=|B|$, $X \subseteq A \setminus B$, and $Y \subseteq B \setminus A$.
    Then, a function $c\colon \cX_{A\setminus B,B\setminus A}\to \F$ defined by
    \begin{align}
        c(U,V) \coloneqq \begin{cases}
            \tbinom{|X+Y|-|A\setminus B|}{|X+Y|-|U|} & (X \subseteq U,\,\ Y \subseteq V), \\
            0 & (\text{otherwise})
        \end{cases}
    \end{align}
    for $(U,V) \in \cX_{A\setminus B,B\setminus A}$ satisfies~\eqref{eq:c} for every $\varphi \in \Phi_{A\setminus B,B\setminus A}$.
\end{lemma}

\begin{proof}
    The conditions $X \subseteq U$ and $Y \subseteq \varphi(U)$ are equivalent to $X\cup \varphi^{-1}(Y)\subseteq U$.
    Thus, letting $k \coloneqq |X\cup \varphi^{-1}(Y)|$ and $u \coloneqq |U|$, the left-hand side of~\eqref{eq:c} becomes
    \begin{align}\label{eq:main-gp-vandermonde}
        \sum_{u=k}^{|X+Y|}
        \tbinom{|A\setminus B|-k}{u-k}
        \tbinom{|X+Y|-|A\setminus B|}{|X+Y|-u}
        = \sum_{i=0}^{|X+Y|-k}
        \tbinom{|A\setminus B|-k}{i}
        \tbinom{|X+Y|-|A\setminus B|}{|X+Y|-k-i}.
    \end{align}
    We use Vandermonde's convolution identity, stating $\sum_{i=0}^m \binom{n}{i}\binom{m}{r-i} = \binom{n+m}{r}$ for $n,m,r \in \Z$ (see~\cite{Spivey2019-kl}).
    This and $k \le |X+Y|$ give that~\eqref{eq:main-gp-vandermonde} is equal to $\tbinom{|X+Y|-k}{|X+Y|-k} = 1$, as required.
\end{proof}

\begin{corollary}
    Let $K \in \F^{r \times n}$ be a matrix over a field $\F$, $A, B \in \tbinom{[n]}{r}$, $X \subseteq A \setminus B$, and $Y \subseteq B \setminus A$.
    Then,
    \begin{align}\label{eq:main-gp}
        \mu_{A,B}^K(\emptyset, \emptyset)
        = \sum_{(U,V) \in \cX_{A\setminus B, B\setminus A}\::\: X \subseteq U, Y \subseteq V, |U|\le|X+Y|} \tbinom{|X+Y|-|A\setminus B|}{|X+Y|-|U|} \mu_{A,B}^K(U,V).
    \end{align}
\end{corollary}

\begin{corollary}\label{cor:reconf-main}
  Let $A$ and $B$ be bases of a matroid $\bM$ representable over a field of characteristic zero, $X\subseteq A\setminus B$, and  $Y \subseteq B\setminus A$.
  Then, there exist $U \subseteq A \setminus B$ and $V \subseteq B \setminus A$ such that $X \subseteq U$, $Y \subseteq V$, $|U|=|V| \le |X+Y|$, $A-U+V$ and $B+U-V$ are bases of $\bM$, and $(A-U+V,B+U-V)$ is obtained from $(A,B)$ using symmetric exchanges in $\bM$.
\end{corollary}

Since \zcref{thm:gp-main} yields only polynomial identities in the entries of the matrix and does not take into account algebraic relations among the entries, such as $\rank K[X+Y] < |X+Y|$, the stronger form of \zcref{thm:main} with the upper bound $|U|=|V| \le r(X+Y)$ cannot be obtained by a na\"ive application of \zcref{thm:gp-main}.
Nevertheless, \zcref{thm:with_ranks} proved in \zcref{sec:two-max} below recovers \zcref{thm:main} with the stronger upper bound for representable matroids.

\subsubsection{A Common Generalization of \texorpdfstring{\zcref{thm:main,thm:equitability-exchange}}{Theorems~\ref{thm:main} and~\ref{thm:equitability-exchange}} for Representable Cases}\label{sec:two-max}

In this section, we apply the harmonic framework to derive further generalizations beyond \zcref{thm:main,thm:equitability-exchange} for representable matroids, in some cases under characteristic conditions.

Let $A$ and $B$ be sets with $|A|=|B|$. To avoid complicated calculations of binomial coefficients, we first describe a useful lemma ensuring the existence of $c$ with~\eqref{eq:c} and $\supp(c) \subseteq \cU$ for given $\cU \subseteq \cX_{A\setminus B,B\setminus A}$  without computing its exact values.
For a bijection $\varphi \in \Phi_{A\setminus B, B\setminus A}$ and $\cU \subseteq \cX_{A\setminus B,B\setminus A}$, define 
\begin{align}
    \tau(\varphi, \cU)
    \coloneqq \sum_{(U, V) \in \cU} \delta_\varphi(U, V)
    = |\Set{U\subseteq A\setminus B}{(U, \varphi(U))\in \cU}|.
\end{align}
Let $\bmPhi = (\Phi_1, \dotsc, \Phi_k)$ be a partition of $\Phi_{A \setminus B, B \setminus A}$ with $\Phi_i \ne \emptyset$ for $i \in [k]$, and $\bmcU = (\cU_1, \dotsc, \cU_l)$ be a partition of a subset $\cU \subseteq \cX_{A\setminus B,B\setminus A}$.
We say that the pair $(\bmPhi, \bmcU)$ is \emph{compatible} if, for any $i \in [k]$ and $j \in [l]$, the value $\tau(\varphi, \cU_j)$ is constant over all $\varphi \in \Phi_i$.
For compatible $(\bmPhi, \bmcU)$, let $C_{\bmPhi, \bmcU} \in \Z^{k \times l}$ denote the matrix such that its $(i,j)$ entry is $\tau(\varphi, \cU_j)$ with arbitrary $\varphi \in \Phi_i$.

\begin{lemma}\label{lem:matrix}
    Let $A$ and $B$ be sets with $|A|=|B|$, $\bmPhi = (\Phi_1, \dotsc, \Phi_k)$ a partition of $\Phi_{A \setminus B, B \setminus A}$, and $\bmcU = (\cU_1, \dotsc, \cU_k)$ a partition of $\cU \subseteq \cX_{A\setminus B,B\setminus A}$ such that $(\bmPhi, \bmcU)$ is compatible.
    If $C_{\bmPhi, \bmcU}$ is nonsingular as a matrix over a field $\F$, then there exists $c\colon \cX_{A\setminus B,B\setminus A} \to \F$ such that~\eqref{eq:c} holds for every $\varphi \in \Phi_{A\setminus B,B\setminus A}$ and $\supp(c) \subseteq \cU$.
\end{lemma}
\begin{proof}
    By the nonsingularity of $C_{\bmPhi, \bmcU}$, we can take $x \in \F^{k}$ with $C_{\bmPhi, \bmcU}x = \ones$, where $\ones$ is the vector of ones.
    This equality means
    \begin{align}
        1
        =\sum_{j=1}^k \tau(\varphi, \cU_j) x_j
        =\sum_{j=1}^k \sum_{(U,V) \in \cU_j} \delta_\varphi(U,V) x_j
        =\sum_{j=1}^k \sum_{(U,V) \in \cU_j \::\: \varphi(U)=V} x_j
    \end{align}
    for any $i \in [k]$ and $\varphi \in \Phi_i$.
    Therefore, the function $c\colon \cX_{A\setminus B,B\setminus A} \to \F$ defined by $c(X,Y) \coloneqq x_j$ for $(X,Y) \in \cU_j$ and $0$ otherwise satisfies~\eqref{eq:c} as required.
\end{proof}

Using \zcref{lem:matrix}, we derive the following; see \zcref{fig:bipartition-gp-exchange} for illustration.

\begin{figure}
    \centering
    \begin{tikzpicture}[
      font=\small,
      line width=0.7pt,
      line cap=round,
      line join=round,
      brace above/.style={decorate,decoration={brace,amplitude=5pt}},
      brace below/.style={decorate,decoration={brace,mirror,amplitude=5pt}},
      myarrow/.style={<->, >=Stealth, line width=0.7pt}
    ]
      \def\W{5}
      \def\H{0.35}
      \def\WX{3.5}
      \def\WYone{2.9}
      \def\WU{2}
      \def\WVone{1.5}
      \def\WVtwo{0.5}
      \def\Boriginy{-1.5}
    
      % A \ B
      \node[left=12pt] at (0, 0.5*\H) {$A\setminus B$};
      \draw (0,0) rectangle (\W, \H);
      \draw (\WX, 0) -- (\WX, \H);
      \draw[brace above] (0.05, \H+0.1) -- node[midway,above=6pt] {$X$} (\WX-0.05, \H+0.1);
      \fill[gray!25] (0.07, 0.07) rectangle (0.07+\WU, \H-0.07);
    
      % B \ A
      \node[left=12pt] at (0, \Boriginy+0.5*\H) {$B\setminus A$};
      \draw (0, \Boriginy) rectangle (\W, \Boriginy+\H);
      \draw (\WYone, \Boriginy) -- (\WYone, \Boriginy+\H);
      
      \draw[brace below] (0.05, \Boriginy-0.10) -- node[midway,below=6pt] {$Y_t$} (\WYone-0.05, \Boriginy-0.10);
      \draw[brace below] (\WYone+0.05, \Boriginy-0.10) -- node[midway,below=6pt] {$Y_{3-t}$} (\W-0.05, \Boriginy-0.10);
      
      \fill[gray!25] (0.07, \Boriginy+0.07) rectangle (\WVone+0.07, \Boriginy+\H-0.07);
      \draw[myarrow] (0.07, \Boriginy+\H+0.15) -- (\WVone+0.07, \Boriginy+\H+0.15) node[midway, above] {$\ge\max\set{d_t,|X|-|Y_{3-t}|}$};
      
      \fill[gray!25] (\WYone+0.07, \Boriginy+0.07) rectangle (\WYone+0.07+\WVtwo, \Boriginy+\H-0.07);
      \draw[myarrow] (\WYone+0.07, \Boriginy+\H+0.15) -- (\WYone+0.07+\WVtwo, \Boriginy+\H+0.15) node[midway, above] {$p_t$};
    \end{tikzpicture}
    \caption{Illustration of $(U,V) \in \cU^t$ in \zcref{lem:bipartition-gp}. Gray areas in $A\setminus B$ and $B\setminus A$ indicate $U$ and $V$, respectively.}\label{fig:bipartition-gp-exchange}
\end{figure}

\begin{lemma}\label{lem:bipartition-gp}
    Let $\F$ be a field, $A$ and $B$ be sets with $|A|=|B|$, $X\subseteq A\setminus B$, $(Y_1, Y_2)$ a bipartition of $B\setminus A$, $d_1, d_2\in \Z_{\ge 0}$ with $d_1+d_2 = |X|+1$, and $p_1, p_2\in \Z_{\ge 0}$ with $p_t \le \max\set{|X|-|Y_t|, 0}$ for $t \in [2]$.
    If $p_1 = p_2 = 0$ or $\F$ has characteristic zero, then there exists $c\colon \cX_{A\setminus B,B\setminus A} \to \F$ such that~\eqref{eq:c} holds for every $\varphi \in \Phi_{A\setminus B,B\setminus A}$ and $\supp(c) \subseteq \cU^1 \cup \cU^2$, where
    \begin{align}\label{def:cU-t}
        \cU^t &\coloneqq \Set{(U,V) \in \cX_{X, B\setminus A}}{|U| \ge \max\set{d_t, |X|-|Y_{3-t}|}+p_t,\, |V \cap Y_{3-t}|=p_t}
    \end{align}
    for $t \in [2]$.
\end{lemma}
\begin{proof}
    Observe that for $t \in [2]$, we have 
    \begin{align} \label{eq:possible-sizes-new}
        \Set{|\varphi(X)\cap Y_t|}{\varphi \in \Phi_{A\setminus B, B\setminus A}}
        = \Set{i \in \Z}{\max\set{0, |X|-|Y_{3-t}|} \le i \le \min\set{|X|, |Y_t|}}.
    \end{align}
    For $t \in [2]$, let $a_t \coloneqq \max\set{d_t, |X|-|Y_{3-t}|}$ and $b_t \coloneqq \min \set{|X|, |Y_t|}$, and for $a_t \le i \le b_t$, define \[\Phi^{t}_i \coloneqq \Set{\varphi \in \Phi_{A\setminus B, B\setminus A}}{|\varphi(X)\cap Y_t|=i}.\]
    Since $d_1+d_2=|X|+1$, each bijection $\varphi$ satisfies exactly one of $d_1 \le |\varphi(X)\cap Y_1|$ and $d_2 \le |\varphi(X)\cap Y_2|$, hence~\eqref{eq:possible-sizes-new} implies that $\bmPhi \coloneqq (\Phi^1_{a_1}, \dots, \Phi^1_{b_1}, \Phi^2_{a_2},\dots, \Phi^2_{b_2})$ is a partition of $\Phi_{A\setminus B, B\setminus A}$ into nonempty parts.
    
    For $t \in [2]$ and $a_t \le j \le b_t$, define
    \begin{align}
        \cU^t_{j} & \coloneqq \Set{(U, V) \in \cX_{X,B\setminus A}}{|U|=j+p_t, |V\cap Y_{3-t}|=p_t}.
    \end{align}
    Since each $(U,V) \in \cU^t$ satisfies $|V \cap Y_t| = |U|-p_t \le \min\set{|X|-p_t, |Y_t|} \le b_t$, the tuple $(\cU^t_{a_t}, \dotsc, \cU^t_{b_t})$ is a partition of $\cU^t$.
    Moreover, $\cU^1_{j_1}$ and $\cU^2_{j_2}$ are disjoint whenever $a_1 \le j_1 \le b_1$ and $a_2 \le j_2 \le b_2$: a common element $(U,V)$ would satisfy $j_1 = |V \cap Y_1| = p_2$ and $j_2 = |V \cap Y_2| = p_1$, hence
    \[
        |X|+1 = d_1+d_2 \le a_1+a_2 \le j_1+j_2 = p_1+p_2 \le \max\set{|X|-|Y_1|,0} + \max\set{|X|-|Y_2|,0} \le |X|,
    \]
    a contradiction.
    Thus, $\bmcU \coloneqq (\cU_{a_1}^1,\dots, \cU^1_{b_1}, \cU_{a_2}^2, \dots, \cU_{b_2}^2)$ consists of pairwise disjoint sets.
    
    Observe that for each bijection $\varphi \in \Phi_{A\setminus B, B\setminus A}$, each $t \in [2]$, and any integer $j$ with $a_t \le j \le b_t$, we have $\tau(\varphi, \cU^t_{j}) = \binom{|\varphi(X)\cap Y_t|}{j} \binom{|\varphi(X)\cap Y_{3-t}|}{p_t}$.
    This shows that if $a_t \le i_t \le b_t$, $a_t \le j_t \le b_t$, and $\varphi^t_{i_t} \in \Phi^t_{i_t}$ for $t \in [2]$, then 
     \begin{align} \label{eq:CMU}
    \begin{alignedat}{2} 
        \tau(\varphi^1_{i_1}, \cU^1_{j_1}) & = \tbinom{i_1}{j_1}  \tbinom{|X|-i_1}{p_1}, & \qquad  \tau(\varphi^1_{i_1}, \cU^2_{j_2}) & = \tbinom{|X|-i_1}{j_2} \tbinom{i_1}{p_2} = 0, \\
        \tau(\varphi^2_{i_2}, \cU^1_{j_1}) & = \tbinom{|X|-i_2}{j_1} \tbinom{i_2}{p_1} = 0, &  \tau(\varphi^2_{i_2}, \cU^2_{j_2}) &= \tbinom{i_2}{j_2} \tbinom{|X|-i_2}{p_2},
    \end{alignedat} \end{align}
    where $\tbinom{|X|-i_1}{j_2}= \tbinom{|X|-i_2}{j_1} = 0$ since 
    \[|X|-i_t \le |X|-a_t = |X|-\max\set{d_t, |X|-|Y_{3-t}|} = \min \set{d_{3-t}-1,|Y_{3-t}|} \le a_{3-t}-1 < j_{3-t}.\]
    This shows that $(\bmPhi, \bmcU)$ is compatible, and the entries of $C_{\bmPhi, \bmcU}$ are given by~\eqref{eq:CMU}.
    We further see from~\eqref{eq:CMU} that $C_{\bmPhi, \bmcU}$ is a lower triangular matrix, since $\binom{i_t}{j_t} = 0$ if $j_t > i_t$.  
    Its diagonal entries are $\tbinom{i_t}{i_t}\tbinom{|X|-i_t}{p_t} = \tbinom{|X|-i_t}{p_t}$ for $t \in [2]$ and $i_t \in \set{a_t,\dots, b_t}$.
    Since $|X|-i_t \ge |X|-b_t = |X|-\min\set{|X|, |Y_t|} = \max\set{0, |X|-|Y_t|} \ge p_t$, we have $\binom{|X|-i_t}{p_t} \ne 0$, thus all diagonal entries of $C_{\bmPhi, \bmcU}$ are nonzero integers, and are indeed $1$ if $p_1 = p_2 = 0$.
    Therefore, \zcref{lem:matrix} gives the desired $c$.
\end{proof}

\zcref{thm:gp-matroids,lem:bipartition-gp} imply \zcref{thm:two-max}, restated below for readability.

\twomax*

\begin{proof}
    We first verify that the maximum in~\eqref{def:m-t} is taken over a nonempty set.
    Without loss of generality, let $t=1$.
    We apply \zcref{thm:gp-matroids,lem:bipartition-gp} with $d_1 \coloneqq 0$ and $d_2 \coloneqq |X|+1$, and let $\cU^t$ be defined by~\eqref{def:cU-t} for $t \in [2]$.
    Since every $(U,V) \in \cU^2$ would satisfy $|U| \ge d_{2} + p_{2} > |X| \ge |U|$, we have $\cU^{2} = \emptyset$; hence the obtained pair $(U,V) \in \cU_{A,B}^{\bM}(\F) \cap \cU^1$ belongs to the set defining $m_1$.
    
    We apply \zcref{thm:gp-matroids,lem:bipartition-gp} again with $d_1 \coloneqq |X|-m_2$ and $d_2 \coloneqq m_2+1$, where $\cU^1$ and $\cU^2$ below refer to~\eqref{def:cU-t} for these new $d_1$ and $d_2$.
    Since every $(U,V) \in \cU^2$ satisfies $|V \cap Y_2| = |U| - p_2 \ge d_2 > m_2$, contradicting the maximality of $m_2$, we have $\cU_{A,B}^\bM(\F) \cap \cU^2 = \emptyset$; hence the obtained pair lies in $\cU_{A,B}^\bM(\F) \cap \cU^1$.
    Any $(U,V) \in \cU^1$ satisfies $|V \cap Y_1| = |U| - p_1 \ge d_1$, so $m_1 \ge d_1 = |X|-m_2$, finishing the proof.
\end{proof}

As a consequence of \zcref{thm:two-max}, we obtain \zcref{thm:with_ranks}, restated below, which is a common generalization of \zcref{thm:main, thm:equitability-exchange} for matroids representable over a field of characteristic zero.

\withranks*

\begin{proof}
    We first show~\ref{it:subsets_with_ranks}.
    Applying \zcref{thm:two-max} to the subset $X_1 \subseteq A\setminus B$, the bipartition $(Y_1, Y_2)$ of $B\setminus A$, and $(p, 0)$ in place of $(p_1, p_2)$, we have $|X_1| \le m_1 + m_2$, where $m_t$ is defined by~\eqref{def:m-t} for $t \in [2]$.
    Observe that if $(U,V) \in \cU_{A,B}^{\bM}(\F)$ satisfies $U\subseteq X_1$ and $V\subseteq Y_2$, then $X_2+V$ and $Y_1+U$ are independent subsets of $X_2+Y_2$ and $X_1+Y_1$, respectively, thus $|V| \le r(X_2+Y_2)-|X_2|$ and $|U|\le r(X_1+Y_1)-|Y_1|$, hence $m_2 \le \min \set{r(X_2+Y_2)-|X_2|,\, r(X_1+Y_1)-|Y_1|}$.
    Therefore,
    \[
        m_1 \ge |X_1|-m_2 \ge \max \set[\big]{|A\setminus B| - r(X_2+Y_2),\ |X_1+Y_1|-r(X_1+Y_1)},
    \]
    and any $(U,V) \in \cU_{A,B}^{\bM}(\F)$ attaining the maximum of $m_1$ proves the claim.

    Next we show~\ref{it:supersets_with_ranks}.
    We apply~\ref{it:subsets_with_ranks} to the basis pair $(B,A)$, the bipartitions $(Y_2, Y_1)$ of $B\setminus A$ and $(X_2, X_1)$ of $A\setminus B$, and the integer $p$; the assumption on $p$ is met since $|Y_2|-|X_2| = |X_1|-|Y_1|$ by $|X_1|+|X_2|=|Y_1|+|Y_2|$.
    This yields $(U',V') \in \cU_{B,A}^{\bM}(\F)$ such that $U' \subseteq Y_2$,
    \[
        |X_2 \cap V'| \ge \max\set[\big]{|A\setminus B|-r(X_1+Y_1),\ |X_2+Y_2|-r(X_2+Y_2)},
    \]
    and $|X_1 \cap V'| = p$.
    Let $U \coloneqq (A\setminus B)-V'$ and $V\coloneqq (B\setminus A)-U'$.
    Then, $A-U+V = B-U'+V'$ and $B+U-V = A+U'-V'$ are bases, and if $\chr(\F)=0$, then $(A-U+V, B+U-V)$ is obtained from $(B,A)$ using symmetric exchanges, hence also from $(A,B)$ by \zcref{cor:weak-gabow}.
    Therefore, $(U,V) \in \cU_{A,B}^{\bM}(\F)$.
    Moreover, $|X_1 \setminus U| = |X_1 \cap V'| = p$ holds, and $U' \subseteq Y_2$ implies $Y_1 \subseteq V$.
    Finally, we have
    \begin{align}
        |U| = |A \setminus B| - |V'|
        &= |A\setminus B| - p - |X_2 \cap V'| \\
        &\le |A\setminus B| - p - \max\set[\big]{|A\setminus B|-r(X_1+Y_1),\ |X_2+Y_2|-r(X_2+Y_2)} \\
        &= \min\set[\big]{r(X_1+Y_1),\ r(X_2+Y_2)-|X_2+Y_2|+|A\setminus B|} - p,
    \end{align}
    finishing the proof.
\end{proof}

\subsubsection{An Exchange Property with General Partitions} \label{sec:general}

For a matroid representable over a field of characteristic zero, this section describes an exchange property with partitioning $A \setminus B$ and $B\setminus A$ into a general number of parts.
For $k,l \in \Z_{\ge0}$, let $\cP_{k,l} \coloneqq \Set{(u, v) \in \Z_{\ge0}^k \times \Z_{\ge0}^l}{\sum_{i=1}^k u_i = \sum_{j=1}^l v_j}$.
For $(x,y) \in \cP_{k,l}$, let $\cQ_{x,y} \coloneqq \Set{Q \in \Z_{\ge0}^{k \times l}}{Q\ones = x,\, Q^\top\ones = y}$, where $\ones$ denotes the all-one vector.
For $Q = {(q_{i,j})}_{1\le i\le k, 1\le j\le l} \in \Z_{\ge0}^{k \times l}$, we say that $(u, v) \in \cP_{k,l}$ is \emph{$Q$-feasible} if the following two conditions, which are equivalent (see \zcref{lem:b-matching-equiv} below), are met.
\begin{enumerate}
    \item[(P)] \customlabel{item:primal}{(P)} There is $Q' = (q'_{i,j})_{1 \le i \le k, 1 \le j \le l}\in \cQ_{u,v}$ such that $q'_{i,j} \le q_{i,j}$ for $i \in [k]$ and $j \in [l]$.
    \item[(D)] \customlabel{item:dual}{(D)} $\displaystyle \sum_{i \in I} u_i- \sum_{j \in J} v_j \le \sum_{i \in I} \sum_{j \in [l]\setminus J} q_{i,j}$ for all $I\subseteq [k]$ and $J\subseteq [l]$.
\end{enumerate}

\begin{lemma}\label{lem:b-matching-equiv}
    For $k,l \in \Z_{\ge0}$, $(u,v) \in \cP_{k,l}$, and $Q = {(q_{i,j})}_{1\le i\le k, 1\le j\le l} \in \Z_{\ge0}^{k \times l}$, the two conditions \ref{item:primal} and \ref{item:dual} are equivalent.
    Moreover, the conditions can be checked in polynomial time.
\end{lemma}
\begin{proof}
    The claim is a rephrasing of the dual characterization of the existence of a perfect $b$-matching with capacity constraints~\cite[Corollary~21.7a]{schrijver2003combinatorial}, stating that the desired $Q'$ in \ref{item:primal} exists if and only if
    \begin{align}
        \sum_{i \in I} u_i + \sum_{j \in J} v_j - \frac12 \prn*{\sum_{i=1}^k u_i + \sum_{j=1}^l v_j} \le \sum_{i \in I} \sum_{j \in J} q_{i,j}
    \end{align}
    holds for $I \subseteq [k]$ and $J \subseteq [l]$.
    Swapping $J$ and $[l] \setminus J$ and using $\sum_{i=1}^k u_i = \sum_{j=1}^l v_j$, we obtain the desired characterization.
    The polynomial solvability is also obtained via reduction to minimum-cost flow; see, e.g.,~\cite[Chapter~21]{schrijver2003combinatorial}.
\end{proof}

\begin{restatable}{theorem}{gppartition}\label{thm:gp-partition}
    Let $A$ and $B$ be bases of a matroid representable over a field $\F$ of characteristic zero, and $(X_1, \dots, X_k)$ and $(Y_1, \dots, Y_l)$ be partitions of $A\setminus B$ and $B\setminus A$, respectively.
    Let $\cP\subseteq \cP_{k,l}$ be a subset, and assume that there exists a unique bijection $\sigma\colon \cP \to \mathcal{Q}_{x,y}$ with $x \coloneqq (|X_1|, \dotsc, |X_k|)$ and $y \coloneqq (|Y_1|, \dotsc, |Y_l|)$ such that $(u,v)$ is $\sigma(u,v)$-feasible for each $(u,v)\in \cP$.
    Then, there exist $(U, V) \in \cU_{A,B}^\bM(\F)$ such that $\big((|U\cap X_1|, \dots, |U\cap X_k|), (|V\cap Y_1|, \dots, |V \cap Y_l|)\big) \in \cP$.
\end{restatable}

\begin{proof}
    For each $Q=(q_{i,j})_{1\le i\le k,\,1\le j\le l}\in\cQ_{x,y}$, define
    \[
        \Phi_Q
        \coloneqq
        \Set{
            \varphi\in\Phi_{A\setminus B,B\setminus A}
        }{
            |\varphi(X_i)\cap Y_j|=q_{i,j}
            \ (i\in[k], j\in[l])
        }.
    \]
    Each $\Phi_Q$ is nonempty, and the sets $\Phi_Q$ for $Q\in\cQ_{x,y}$ form a partition of $\Phi_{A\setminus B,B\setminus A}$.
    For each $(u,v)\in\cP$, define
    \[
        \cU_{u,v}
        \coloneqq
        \Set{
            (U,V)\in\cX_{A\setminus B,B\setminus A}
        }{
            |U\cap X_i|=u_i\ (i\in[k]),\
            |V\cap Y_j|=v_j\ (j\in[l])
        }.
    \]
    These sets are pairwise disjoint. Moreover, each $\cU_{u,v}$ is nonempty because $(u,v)$ is $\sigma(u,v)$-feasible.
    Write $\cP=\set{(u^1,v^1),\dotsc,(u^m,v^m)}$, where $m\coloneqq|\cP|$, and set $Q^s\coloneqq\sigma(u^s,v^s)$ for $s\in[m]$.
    Define $\bmPhi\coloneqq(\Phi_{Q^1},\dotsc,\Phi_{Q^m})$ and $\bmcU \coloneqq(\cU_{u^1,v^1},\dotsc,\cU_{u^m,v^m})$.
    
    Fix $Q=(q_{i,j})\in\cQ_{x,y}$, $\varphi\in\Phi_Q$, and $(u,v)\in\cP$.
    Choosing $U\subseteq A\setminus B$ such that $(U,\varphi(U))\in\cU_{u,v}$ amounts to choosing, for each $i\in[k]$ and $j\in[l]$, some number $q'_{i,j}$ of the $q_{i,j}$ elements of $X_i$ mapped by $\varphi$ into $Y_j$, where $Q'=(q'_{i,j}) \in \cQ_{u,v}$
    satisfies $q'_{i,j}\le q_{i,j}$. Consequently,
    \begin{align}\label{eq:gp-partition-tau}
        \tau(\varphi,\cU_{u,v})
        =
        \sum_{\substack{
            Q'=(q'_{i,j})\in\cQ_{u,v}\\
            q'_{i,j}\le q_{i,j}\ 
            (i\in[k],\,j\in[l])
        }}
        \prod_{i=1}^k\prod_{j=1}^l
        \binom{q_{i,j}}{q'_{i,j}}.
    \end{align}
    In particular,~\eqref{eq:gp-partition-tau} depends only on $Q$, and not on the
    choice of $\varphi\in\Phi_Q$. Hence, $(\bmPhi,\bmcU)$ is
    compatible.
    Furthermore,~\eqref{eq:gp-partition-tau} is nonzero if and only if $(u,v)$ is $Q$-feasible.
    Thus, by the assumption on $\sigma$, the matrix $C_{\bmPhi,\bmcU}$ has a unique nonzero transversal, and \zcref{lem:h-c,lem:matrix} yield the desired sets $U$ and $V$.
\end{proof}

We note that, for a given $(x,y) \in \cP_{k,l}$ and $\cP \subseteq \cP_{k,l}$, the condition of $\cP$ in \zcref{thm:gp-partition} can be checked in time polynomial in $k$, $l$, and $|\cP|$.
For this, we first check if $|\cP|=|\cQ_{x,y}|$ is true or not; if not, $\cP$ cannot satisfy the condition.
If $|\cP|=|\cQ_{x,y}|$, we construct a 0-1 matrix $F \in \set{0,1}^{\cP \times \cQ_{x,y}}$ such that the $((u,v),Q)$ entry is $1$ if $(u,v)$ is $Q$-feasible and $0$ otherwise for each $(u,v) \in \cP$ and $Q \in \cQ_{x,y}$.
This matrix can be constructed in polynomial time by \zcref{lem:b-matching-equiv}.
Then, the uniqueness of a nonzero transversal of this matrix can be checked via the Dulmage--Mendelsohn decomposition; see~\cite[Theorem~2.2.22]{murota2010matrices}.

For the case where $k=l=2$, \zcref{thm:gp-partition} can be specialized as follows.

\begin{restatable}{corollary}{gpbipartition}\label{cor:gp-bipartition}
    Let $A$ and $B$ be bases of a matroid representable over a field $\F$ of characteristic zero, and $(X_1, X_2)$ and $(Y_1, Y_2)$ be bipartitions of $A\setminus B$ and $B\setminus A$, respectively.
    Let $s \coloneqq \min\set{|X_1|,|X_2|,|Y_1|,|Y_2|}$ and $\cP \subseteq \cP_{2,2}$ be a subset with $|\cP| = s+1$ such that $u_i \le |X_i|$ and $v_j \le |Y_j|$ hold for each $(u,v)\in \cP$ and $i,j \in [2]$.
    Assume that there exists a unique ordering $(u^0, v^0), \dotsc, (u^s, v^s)$ of $\cP$ such that
    \begin{align}\label{eq:gp-bipartition}
        \begin{aligned}
            u^t_1 - v^t_1 &\le \min\set{|X_1|, |Y_2|} - t, &
            u^t_1 - v^t_2 &\le \max\set{0, |X_1|-|Y_2|} + t, \\
            u^t_2 - v^t_1 &\le \max\set{0, |Y_2|-|X_1|} + t, &
            u^t_2 - v^t_2 &\le \min\set{|X_2|, |Y_1|} - t
        \end{aligned}
    \end{align}
    for each $t \in \set{0, \dotsc, s}$.
    Then, there exist $(U, V) \in \cU_{A,B}^\bM(\F)$ such that $((|U \cap X_1|, |U \cap X_2|), (|V \cap Y_1|, |V \cap Y_2|)) \in \cP$.
\end{restatable}
\begin{proof}
    Let $x \coloneqq (|X_1|, |X_2|)$ and $y \coloneqq (|Y_1|, |Y_2|)$.
    Then, each $(u,v) \in \cP_{2,2}$ is $Q$-feasible for $Q = {(q_{i,j})}_{1\le i,j \le 2} \in \cQ_{x,y}$ if and only if $u_i \le x_i$, $v_j \le y_j$, and $u_i-v_j \le q_{i,3-j}$ for each $i,j \in [2]$.
    Furthermore, $\cQ_{x,y}$ is explicitly written down as
    \begin{align}
        \cQ_{x,y} &= \Set*{\begin{pmatrix}
            \max\set{0, x_1-y_2} & \min\set{x_1, y_2} \\
            \min\set{x_2, y_1} & \max\set{0, y_2-x_1}
        \end{pmatrix} + t\begin{pmatrix} 1 & -1 \\ -1 & 1 \end{pmatrix}}{0 \le t \le \min\set{x_1,x_2,y_1,y_2}}.
    \end{align}
    Thus, \zcref{thm:gp-partition} proves the claim.
\end{proof}

We note that, for matroids representable over a field of characteristic zero, the exchange property obtained by applying \zcref{thm:gp-matroids} to the function $c$ of \zcref{lem:bipartition-gp} can also be derived as an application of \zcref{cor:gp-bipartition} with $(X_1, X_2)\coloneqq (X, (A\setminus B)-X)$ and
\begin{align}
    \cP \coloneqq & \bigcup_{t=1}^2 \Set{((u+p_t, 0), (u, p_t))}{u \in \Z,\ \max\set{d_t, |X|-|Y_{3-t}|} \le u \le  \min\set{|X|, |Y_t|}}.
\end{align}

\subsubsection{Yet Another Common Generalization of \texorpdfstring{\zcref{thm:main,thm:equitability-exchange}}{Theorems~\ref{thm:main} and~\ref{thm:equitability-exchange}}}

\zcref{cor:gp-bipartition} can generate yet another common generalization of \zcref{thm:main,thm:equitability-exchange} for matroids representable over a field of characteristic zero as follows, where an illustration is given \zcref{fig:blue}.

\begin{figure}
    \centering
    \begin{subfigure}{0.5\linewidth}
        \centering
        \begin{tikzpicture}[
          font=\small,
          line width=0.7pt,
          line cap=round,
          line join=round,
          brace above/.style={decorate,decoration={brace,amplitude=5pt}},
          brace below/.style={decorate,decoration={brace,mirror,amplitude=5pt}},
          myarrow/.style={<->, >=Stealth, line width=0.7pt}
        ]
          \def\W{5}
          \def\H{0.35}
          \def\WX{2.6}
          \def\WYone{3.2}
          \def\WU{2}
          \def\WVone{2}
          \def\Boriginy{-1.5}
        
          % A \ B
          \node[left=12pt] at (0, 0.5*\H) {$A\setminus B$};
          \draw (0,0) rectangle (\W, \H);
          \draw (\WX, 0) -- (\WX, \H);
          \draw[brace above] (0.05, \H+0.1) -- node[midway,above=6pt] {$X$} (\WX-0.05, \H+0.1);
          \fill[gray!25] (0.07, 0.07) rectangle (0.07+\WU, \H-0.07);
        
          % B \ A
          \node[left=12pt] at (0, \Boriginy+0.5*\H) {$B\setminus A$};
          \draw (0, \Boriginy) rectangle (\W, \Boriginy+\H);
          \draw (\WYone, \Boriginy) -- (\WYone, \Boriginy+\H);
          
          \draw[brace below] (0.05, \Boriginy-0.10) -- node[midway,below=6pt] {$Y_1$} (\WYone-0.05, \Boriginy-0.10);
          \draw[brace below] (\WYone+0.05, \Boriginy-0.10) -- node[midway,below=6pt] {$Y_2$} (\W-0.05, \Boriginy-0.10);
          
          \fill[gray!25] (0.07, \Boriginy+0.07) rectangle (0.07+\WVone, \Boriginy+\H-0.07);
          \draw[myarrow] (0.07, \Boriginy+\H+0.15) -- (0.07+\WVone, \Boriginy+\H+0.15) node[midway, above] {$\ge |X|-p+1$};
        \end{tikzpicture}
        \caption{Case~\ref{it:blue-first}}
    \end{subfigure}%
    \begin{subfigure}{0.5\linewidth}
        \centering
        \begin{tikzpicture}[
          font=\small,
          line width=0.7pt,
          line cap=round,
          line join=round,
          brace above/.style={decorate,decoration={brace,amplitude=5pt}},
          brace below/.style={decorate,decoration={brace,mirror,amplitude=5pt}},
          myarrow/.style={<->, >=Stealth, line width=0.7pt}
        ]
          \def\W{5}
          \def\H{0.35}
          \def\WX{2.6}
          \def\WYone{3.2}
          \def\WU{1.5}
          \def\WVone{1.0}
          \def\WVtwo{0.5}
          \def\Boriginy{-1.5}
        
          % A \ B
          \node[left=12pt] at (0, 0.5*\H) {$A\setminus B$};
          \draw (0,0) rectangle (\W, \H);
          \draw (\WX, 0) -- (\WX, \H);
          \draw[brace above] (0.05, \H+0.1) -- node[midway,above=6pt] {$X$} (\WX-0.05, \H+0.1);
          \fill[gray!25] (0.07, 0.07) rectangle (0.07+\WU, \H-0.07);
        
          % B \ A
          \node[left=12pt] at (0, \Boriginy+0.5*\H) {$B\setminus A$};
          \draw (0, \Boriginy) rectangle (\W, \Boriginy+\H);
          \draw (\WYone, \Boriginy) -- (\WYone, \Boriginy+\H);
          
          \draw[brace below] (0.05, \Boriginy-0.10) -- node[midway,below=6pt] {$Y_1$} (\WYone-0.05, \Boriginy-0.10);
          \draw[brace below] (\WYone+0.05, \Boriginy-0.10) -- node[midway,below=6pt] {$Y_2$} (\W-0.05, \Boriginy-0.10);
          
          \fill[gray!25] (0.07, \Boriginy+0.07) rectangle (\WVone+0.07, \Boriginy+\H-0.07);
          
          \fill[gray!25] (\WYone+0.07, \Boriginy+0.07) rectangle (\WYone+0.07+\WVtwo, \Boriginy+\H-0.07);
          \draw[myarrow] (\WYone+0.07, \Boriginy+\H+0.15) -- (\WYone+0.07+\WVtwo, \Boriginy+\H+0.15) node[midway, above] {$p$};
        \end{tikzpicture}
        \caption{Case~\ref{it:blue-second}}
    \end{subfigure}%
    \caption{Illustration of \zcref{thm:blue}. Gray areas in $A\setminus B$ and $B\setminus A$ indicate $U$ and $V$, respectively.}\label{fig:blue}
\end{figure}

\begin{restatable}{theorem}{blue}\label{thm:blue}
    Let $A$ and $B$ be bases of a matroid $\bM$ representable over a field $\F$ of characteristic zero.
    For any subset $X\subseteq A\setminus B$, bipartition $(Y_1, Y_2)$ of $B\setminus A$, and integer $p\ge 0$, there exists $(U,V) \in \cU_{A,B}^\bM(\F)$ such that $U\subseteq X$ and either of the following two is met:
    \begin{enumerate}[label={\upshape(\roman*)}] \itemsep0em
        \item \label{it:blue-first} $V\subseteq Y_1$ and $|U|=|V|\ge |X|-p+1$, or
        \item \label{it:blue-second} $|V\cap Y_2| = p$.
    \end{enumerate}
\end{restatable}

\begin{proof}
    Let $s \coloneqq \min\set{|X|, |A\setminus B|-|X|, |Y_1|, |Y_2|}$, $m \coloneqq \max\set{0, |X|-|Y_2|}$, and
    \begin{align}
        H\coloneqq \Set{m+t}{t = 0, \dotsc, s} = \set{m, m+1, \dotsc, \min\set{|X|, |Y_1|}}.
    \end{align}
    We show that the claimed exchange property corresponds to \zcref{cor:gp-bipartition} for the bipartitions $(X, (A\setminus B)-X)$ and $(Y_1,Y_2)$ of $A\setminus B$, $B\setminus A$, respectively, and
    \[
        \cP \coloneqq \Set{((h+p, 0), (h, p))}{h \in H, h \le |X|-p} \cup \Set{((h,0),(h,0))}{h \in H, h \ge |X|-p+1} \subseteq \cP_{2,2}.
    \]
    To see that $\cP$ satisfies the condition in \zcref{cor:gp-bipartition}, assume that an ordering $(u^0, v^0), \dotsc, (u^s, v^s)$ of $\cP$ satisfies the inequalities~\eqref{eq:gp-bipartition} for each $t \in \set{0, \dotsc, s}$.
    Let $h \colon \set{0, \dotsc, s} \to H$ be such that $v^t_1 = h(t)$ for $t \in \set{0, \dotsc, s}$.
    By the inequality $u^t_1 - v^t_2 \le m+t$ coming from~\eqref{eq:gp-bipartition} and $u_1^t - v_2^t = h(t)$ for each $t$, we must have $h(t) = m+t$; i.e., $u^t = (m+t+p, 0)$ and $v^t = (m+t, p)$ if $m+t \le |X|-p$ and $u^t = v^t = (m+t,0)$ if $m+t \ge |X|-p+1$.
    This shows that no ordering other than this satisfies~\eqref{eq:gp-bipartition}, and it can be checked that one indeed satisfies them.
    Indeed, for each $t \in \set{0, \dotsc, s}$, both $u_2^t-v_1^t$ and $u_2^t-v_2^t$ are nonpositive by $u_2^t=0$, hence the inequalities are satisfied.
    For the remaining case, $u_1^t - v_1^t = p \le |X|-m-t = \min\set{|X|,|Y_2|}-t$ if $m+t \le |X|-p$ and $u_1^t - v_1^t = 0 \le \min\set{|X|,|Y_2|}-t$ otherwise.
    Therefore, $\cP$ satisfies the condition in \zcref{cor:gp-bipartition} and the result follows from \zcref{cor:gp-bipartition}.
\end{proof}

Observe that if $|X|>|Y_1|$ and $p=1$, then \ref{it:blue-first} is never satisfied, hence the theorem specializes to \zcref{thm:equitability-exchange} for matroids representable over a field of characteristic zero.
Another notable special case is when $p=r(X_2+Y_2)-|X_2|+1$ where $X_1 \coloneqq X$ and $X_2 \coloneqq (A\setminus B)-X_1$.
In this case, $X_2+(Y_2\cap V)$ is dependent for any $V\subseteq B\setminus A$ with $|Y_2\cap V| = p$, thus \ref{it:blue-second} is never satisfied.
\ref{it:blue-first} gives $U\subseteq X_1$ and $V\subseteq Y_1$ with $|U|=|V| \ge |A\setminus B|-r(X_2+Y_2)$, or equivalently, for $U'\coloneqq (A\setminus B)-U$ and $V'\coloneqq (B\setminus A)-V$ we get $U'\supseteq X_2$ and $V'\supseteq Y_2$ with $|U'|=|V'| \le r(X_2+Y_2)$.
This shows that for matroids representable over a field of characteristic zero, \zcref{thm:blue} is a common generalization of \zcref{thm:main,thm:equitability-exchange}, with the additional property of reconfiguration.

We also note the relation between \zcref{thm:two-max,thm:blue}.
If $p \le \max\set{|X| - |Y_1|, 0}$, then \zcref{thm:two-max} applied with $(p_1, p_2)=(p, 0)$ shows that a pair satisfying~\ref{it:blue-second} always exists; hence, in this range, \zcref{thm:blue} follows from \zcref{thm:two-max}.
However, for $p > \max\set{|X| - |Y_1|, 0}$,~\ref{it:blue-second} may fail, and \zcref{thm:blue} guarantees~\ref{it:blue-first} instead.

\subsection{Randomized Algorithm for Finding Exchangeable Sets}\label{sec:rand-algo}

Our proofs of exchange properties using \zcref{cor:reconf-main} do not yield a polynomial-time algorithm for finding the subsets $U$ and $V$.
Nevertheless, for exchange properties derived from \zcref{thm:gp-partition}, we show that a randomized polynomial-time algorithm of Las Vegas type is available via \zcref{prop:exact}.

\begin{restatable}{theorem}{gpalgo}\label{thm:gp-algo}
    Let $\bM$ be a matroid representable over a field $\F$.
    There is a randomized algorithm that, given a matrix $K \in \F^{r \times n}$ representing $\bM$, bases $A$ and $B$ of $\bM$, partitions $(X_1, \dotsc, X_k)$ and $(Y_1, \dotsc, Y_l)$ of $A\setminus B$ and $B\setminus A$, respectively, and $\cP \subseteq \cP_{k,l}$, outputs a pair $(U, V)$ such that $A-U+V$ and $B+U-V$ are bases of $\bM$ and $\prn[\big]{(|U\cap X_1|, \dots, |U\cap X_k|), (|V\cap Y_1|, \dots, |V \cap Y_l|)} \in \cP$, provided that such a pair exists.
    The algorithm runs in expected time polynomial in $n$ and ${(r+1)}^{k+l}$, assuming unit-time field operations.
\end{restatable}

\begin{proof}
    We may assume $A \cup B = E(\bM) \eqqcolon E$ and $A \cap B = \emptyset$ by contracting $A \cap B$ and deleting $E - (A \cup B)$; the corresponding matrix representation is obtained by applying Gaussian elimination to $K$ and removing rows and columns.
    Note that a matrix representation of the dual matroid $\bM^*$ can also be obtained from $K$ by Gaussian elimination.
    Since $B = E - A$, a set $B' \subseteq E$ is of the form $B' = A - U + V$ with $U \subseteq A \setminus B$ and $V \subseteq B \setminus A$ if and only if $U = A - B'$ and $V = B' - A$, and then $B + U - V = E - B'$.
    Hence, $(U, V)$ is a pair such that $A-U+V$ and $B+U-V$ are bases if and only if $B' \coloneqq A - U + V$ is a common basis of $\bM$ and $\bM^*$, and in this case $|B' \cap X_i| = |X_i| - |U \cap X_i|$ for $i \in [k]$ and $|B' \cap Y_j| = |V \cap Y_j|$ for $j \in [l]$.
    For $i \in [k]$ and $j \in [l]$, define weight functions $w_i \coloneqq \ones_{X_i}$ and $w_{k+j} \coloneqq \ones_{Y_j}$, and let
    \[
        \cT \coloneqq \Set{(|X_1| - u_1, \dotsc, |X_k| - u_k, v_1, \dotsc, v_l)}{(u, v) \in \cP}.
    \]
    Then, by the above correspondence, a desired pair exists if and only if $\bM$ and $\bM^*$ have a common basis $B'$ with $(w_1(B'), \dotsc, w_{k+l}(B')) \in \cT$, and $(U, V)$ is recovered from $B'$ in linear time.
    The algorithm repeats applying the algorithm of \zcref{prop:exact} to $\bM$ and $\bM^*$ with these weight functions and $\cT$ until a common basis is found.
    As a feasible pair is guaranteed to exist, this yields the Las Vegas guarantee by the same argument as in the proof of \zcref{thm:weighted-equitability}.
    Since all weights are $0$-$1$, each application uses a number of field operations polynomial in $n$ and ${(r+1)}^{k+l}$ by \zcref{prop:exact}, and the claimed expected running time follows.
\end{proof}

\zcref{thm:gp-algo} applies to \zcref{thm:blue,thm:with_ranks}.
Moreover, we show that \zcref{thm:gp-algo} also yields an algorithm for \zcref{thm:two-max}, as claimed in \zcref{thm:two-max-algo}, which is restated here for readability.

\twomaxalgo*

\begin{proof}
    For each $d \in \set{0, \dotsc, |X|+1}$, we apply \zcref{thm:gp-algo} to \zcref{lem:bipartition-gp} with $(d_1, d_2) \coloneqq (d, |X|+1-d)$ to obtain $U_d \subseteq X$ and $V_d \subseteq B \setminus A$ such that $A-U_d+V_d$ and $B+U_d-V_d$ are bases,  and, for some $t \in [2]$, both $|V_d \cap Y_{3-t}| = p_t$ and $|V_d \cap Y_t| \ge d_t$ hold; a pair with these properties exists by \zcref{thm:gp-matroids,lem:bipartition-gp}.
    Since $|V_d \cap Y_t| \le |V_d| = |U_d| \le |X|$, the alternative $t=2$ cannot occur for $d = 0$, and the alternative $t=1$ cannot occur for $d = |X|+1$.
    Hence, the largest value $d_0$ of $d$ for which the obtained pair satisfies the alternative $t=1$ exists and satisfies $0 \le d_0 \le |X|$.
    Then, $(U_1, V_1) \coloneqq (U_{d_0}, V_{d_0})$ satisfies $|V_1 \cap Y_2| = p_1$ and $|V_1 \cap Y_1| \ge d_0$, and $(U_2, V_2) \coloneqq (U_{d_0+1}, V_{d_0+1})$ satisfies $|V_2 \cap Y_1| = p_2$ and $|V_2 \cap Y_2| \ge |X| + 1 - (d_0 + 1) = |X| - d_0$, whence $|V_1 \cap Y_1| + |V_2 \cap Y_2| \ge |X|$.
\end{proof}

\subsection{Applications}\label{sec:gp-applications}

\subsubsection{Uniqueness Versions of Exchange Properties} \label{sec:uniqueness}

In some contexts, converses of exchange properties are also of interest.
For example, consider the \emph{perfect-matching lemma}~\cite{brualdi1969comments}, which states that for a basis $A$ and a subset $B$ of the ground set, if $B$ is also a basis, then there exists a bijection $\varphi\colon A \setminus B \to B \setminus A$ such that $A - x + \varphi(x)$ is a basis for every $x \in A \setminus B$.
The converse of this property does not hold in general; that is, the existence of such a bijection does not necessarily imply that $B$ is a basis.
Nevertheless, if the bijection is unique, then $B$ is indeed a basis~\cite{krogdahl1977dependence}.
This property, sometimes referred to as the \emph{unique-matching lemma}~\cite[Lemma~2.3.18]{murota2010matrices}, turns out to be a key ingredient in the augmenting-path algorithm for matroid intersection~\cite{krogdahl1974combinatorial,krogdahl1976combinatorial}.
While the perfect-matching and unique-matching lemmas hold for general matroids, simple linear-algebraic proofs are available for representable cases (see~\cite[Remark~2.3.23]{murota2010matrices}).

Analogously, it is natural to formulate a uniqueness version of the multiple exchange property.
The Grassmann--Plücker identity \eqref{eq:multiple-gp} directly implies that for a representable matroid $\bM$ and subsets $A, B \subseteq E(\bM)$ with $|A| = |B|$ and $X \subseteq A \setminus B$, if there uniquely exists $Y \subseteq B \setminus A$ such that both $A - X + Y$ and $B + X - Y$ are bases, then $A$ and $B$ are bases as well.
It was first shown by Bowler, Ding, and Lowen in an unpublished work that this property holds for general matroids as well.
We observe that it also follows directly from the result of Kung~\cite{kung1978alternating} (\zcref{cor:stronger_kung}).

\begin{corollary}\label{cor:unique-multiple}
    Let $\bM$ be a matroid, $A, B \subseteq E(\bM)$ subsets with $|A|=|B|$, and $X\subseteq A\setminus B$.    
    If there uniquely exists $Y\subseteq B\setminus A$ such that $A-X+Y$ and $B+X-Y$ are bases, then $A$ and $B$ are bases as well. 
\end{corollary}
\begin{proof}
    Let $A'\coloneqq A-X+Y$ and $B'\coloneqq B+X-Y$.
    Observe that the uniqueness of $Y$ implies that there exist no nonempty subsets $U\subseteq Y$ and $V\subseteq (B\setminus A)-Y$ such that $A'-U+V = A-X+(Y-U+V)$ and $B'+U-V = B+X-(Y-U+V)$ are bases.
    Then, the contraposition of \zcref{cor:stronger_kung} applied to the bases $A'$ and $B'$ and the bipartitions $(Y, (A\setminus B)-X)$ and $((B\setminus A)-Y, X)$ of $A'\setminus B'$ and $B'\setminus A'$, respectively, implies that $Y+((B\setminus A)-Y)+(A\cap B) = B$ is independent and $((A\setminus B)-X)+X+(A\cap B) = A$ is spanning.
    As $A$ and $B$ have size $r$, they are necessarily bases as well.
\end{proof}

We note that the previous proof shows that the condition $|A|=|B|$ can be relaxed to $|A|\le |B|$.

Motivated by these, it is natural to ask whether similar uniqueness versions of more general exchange properties hold.
For matroids representable over a field of characteristic zero, a uniqueness version of \zcref{thm:main} with the weaker upper bound on the sizes of $U$ and $V$ follows from the corresponding Grassmann--Plücker identity~\eqref{eq:main-gp}, in the same way as the case of the multiple exchange property.

\begin{restatable}{theorem}{mainuniquezero}\label{thm:main-unique-czero}
    Let $\bM$ be a matroid representable over a field of characteristic zero, with ground set $E$ and rank function $r$.
    Let $A,B \in \binom{E}{r(E)}$ and $X \subseteq A \setminus B$ and $Y \subseteq B \setminus A$ be subsets such that $|X+Y| < |A \setminus B|$.
    If there uniquely exist $U \subseteq A \setminus B$ and $V\subseteq B \setminus A$ such that $X \subseteq U$, $Y \subseteq V$, $|U| = |V| \le |X+Y|$, and $A-U+V$ and $B+U-V$ are bases, then $A$ and $B$ are bases as well.
\end{restatable}
\begin{proof}
    Let $K \in \F^{r \times n}$ be a matrix representing $\bM$.
    We check the coefficients of~\eqref{eq:main-gp} for $K$.
    For an integer $u$ with $\max\set{|X|,|Y|} \le u \le |X+Y|$, the coefficient of $(U,V) \in \cX_{A\setminus B,B\setminus A}$ with $|U|=|V|=u$, $X \subseteq U$, and $Y \subseteq V$ is $\tbinom{|X+Y|-|A \setminus B|}{|X+Y|-u}$,
    which is zero if and only if $0 \le |X+Y|-|A \setminus B| < |X+Y|-u$, that is,  $u < |A \setminus B| \le |X+Y|$.
    As $|X+Y| < |A \setminus B|$ is assumed, the coefficient is nonzero for all $u$.
    Hence, the unique existence of the subsets $U$ and $V$ implies that the right-hand side of~\eqref{eq:main-gp} is nonzero, and thus $A$ and $B$ are bases.
\end{proof}

\zcref{thm:main-unique-czero} recovers \zcref{cor:unique-multiple} for the zero-characteristic case if $Y = \emptyset$; note that the $X=A \setminus B$ and $Y=\emptyset$ case does not satisfy the requirement $|X+Y| < |A \setminus B|$ of \zcref{thm:main-unique-czero}, whereas this case is trivial.
The uniqueness version with the stronger upper bound $|U|=|V| \le r(X+Y)$ can also be obtained from the corresponding Grassmann--Plücker identity by explicitly computing the coefficients and verifying that they are nonzero.

We also remark that the condition $|X+Y| < |A\setminus B|$ cannot be relaxed to $|X+Y| \le |A \setminus B|$ in general, as the following counterexample shows: consider the matroid represented by $K = \begin{psmallmatrix} 1 & 1 & 0 & 0 \\ 0 & 0 & 1 & 1 \end{psmallmatrix}$ and let $A = \set{1, 2}$, $B = \set{3, 4}$, $X = \set{1}$, and $Y = \set{3}$.
Then, $U=\set{1}$ and $V=\set{3}$ is the unique pair such that $X \subseteq U$, $Y \subseteq V$, $|U|=|V| \le |X+Y| = 2$, and $A-U+V$ and $B+U-V$ are bases, whereas $A$ and $B$ are not bases.

It is also worth mentioning that the same proof as \zcref{thm:main-unique-czero} does not work for matrices over a field of positive characteristic, as the binomial coefficient might become zero.
In fact, somewhat interestingly, \zcref{thm:main-unique-czero} fails for binary matroids.
Let $e_i \in \GF(2)^r$ denote the $i$th unit vector and $\ones \in \GF(2)^r$ the all ones vector.
Let $r \ge 4$ be even, $A = \set{\ones, e_2, \dots, e_r}$, $B=\set{e_1, \ones-e_2,\ones-e_3,\dots, \ones-e_r}$, $X=\set{\ones}$, $Y=\set{e_1}$, and consider the binary matroid on $A+B$ defined by these vectors.
Then, $A-X+Y$ and $B+X-Y$ are bases.
For integers $i$ and $j$ with $2 \le i, j \le r$, the set $B+\set{\ones,e_i}-\set{e_1,\ones-e_j}$ cannot be a basis unless $i=j$ because $\set{\ones, e_i, \ones - e_i}$ is dependent.
Moreover, $A-\set{\ones,e_i}+\set{e_1,\ones-e_i}$ is not a basis as $r$ is even.
Therefore, $U=X$ and $V=Y$ is the unique pair such that $X \subseteq U$, $Y \subseteq V$, $|U|=|V|\le 2$, and $A-U+V$ and $B+U-V$ are bases.
However, $B$ is not a basis as $r$ is even.
Thus, \zcref{thm:main-unique-czero} fails for this instance.

\subsubsection{Exchange Properties of Representable Valuated Matroids} \label{sec:valuated}

A further implication of Grassmann--Plücker-type identities lies in \emph{valuated matroids}. 
A valuated matroid~\cite{Dress1990-uc} is a function $\omega\colon \binom{E}{r} \to \R \cup \set{-\infty}$ satisfying the following property: for any $A, B \in \binom{E}{r}$ and $x \in A \setminus B$, there exists $y \in B \setminus A$ such that $\omega(A) + \omega(B) \le \omega(A - x + y) + \omega(B + x - y)$. 
This property can be viewed as a quantitative generalization of the symmetric exchange property of matroids, and a multiple-exchange version also holds~\cite{murota2018a}.
Valuated matroids appear in many research areas, such as combinatorial optimization~\cite{Dress1990-uc,murota2003discrete}, tropical geometry~\cite{maclagan2015introduction}, systems analysis~\cite{murota2010matrices}, and economics~\cite{fujishige2003a}.

An extension of the representability of matroids to valuated matroids is as follows.
Let $\F$ be a \emph{valuated field}, which is a field equipped with a function $v\colon \F \to \R\cup\set{+\infty}$, called a (non-Archimedean) \emph{valuation}, such that $v(0) = +\infty$, $v(1) = 0$, $v(a+b) \ge \min\set{v(a), v(b)}$, and $v(ab) = v(a)+v(b)$ for $a,b \in \F$.
A valuation $v$ is \emph{trivial} over a subfield $\K \subseteq \F$ if $v(a) = 0$ for $a \in \K \setminus \set{0}$.
A typical example of a valuated field is the univariate rational function field $\K(t)$ with valuation $v(p/q) \coloneqq -\deg_t p/q = \deg_t q - \deg_t p$ for $p, q \in \K[t]$ with $q \ne 0$, which is trivial over $\K$.
For a matrix $K \in \F^{r \times n}$ over a valuated field $\F$ with valuation $v$, a function $\omega_K\colon \binom{[n]}{r} \to \R \cup \set{-\infty}$ defined by $\omega_K(A) \coloneqq -v(\det K[A])$ forms a valuated matroid~\cite{Dress1990-uc}.
This is an immediate consequence of the Grassmann--Plücker identity~\eqref{eq:multiple-gp} for $|X|=1$.
Valuated matroids constructed in this way, sometimes called \emph{representable valuated matroids}, form a quantitative generalization of representable matroids.

The Grassmann--Plücker identities shown in this section imply exchange properties for representable valuated matroids beyond the matroid case.
As an example, we present the analog of \zcref{thm:main} with the weaker upper bound for representable valuated matroids.

\begin{proposition} \label{prop:valuated-main}
    Let $\F$ be a valuated field equipped with a valuation $v$, $K \in \F^{r \times n}$ a matrix, and $A, B \in \binom{[n]}{r}$.
    For any $X\subseteq A\setminus B$ and $Y\subseteq B\setminus A$, there exist $U\subseteq A\setminus B$ and $V\subseteq B\setminus A$ such that $X\subseteq U$, $Y\subseteq V$, $|U|=|V|\le |X+Y|$ and
    $\omega_K(A) + \omega_K(B) \le \omega_K(A-U+V)+\omega_K(B+U-V)$.
\end{proposition}
\begin{proof}
    Note that $v(a) \ge 0$ for nonzero $a \in \Z$.
    Applying $-v$ to \eqref{eq:main-gp}, we get
    \begin{align}
        \omega_K(A)+\omega_K(B)
        &= -v(\mu_{A,B}^K(\emptyset, \emptyset)) \\
        &= -v\prn*{\sum_{(U,V) \in \cX_{A\setminus B, B\setminus A}\::\: X \subseteq U, Y \subseteq V, |U|\le|X+Y|} \tbinom{|X+Y|-|A\setminus B|}{|X+Y|-|U|} \mu_{A,B}^K(U,V)} \\
        &\le -\min_{(U,V) \in \cX_{A\setminus B, B\setminus A}\::\: X \subseteq U, Y \subseteq V, |U|\le|X+Y|} v(\mu_{A,B}^K(U,V)) \\
        &= \max_{(U,V) \in \cX_{A\setminus B, B\setminus A}\::\: X \subseteq U, Y \subseteq V, |U|\le|X+Y|} (\omega_K(A-U+V) + \omega_K(B+U-V)).
        \qedhere
    \end{align}
\end{proof}

It remains an open problem whether \zcref{prop:valuated-main} can be generalized to arbitrary valuated matroids; our proof of \zcref{thm:main} does not seem to generalize.

\begin{comment}
\begin{align}
        |X| & \le 
        \max\Set{|V \cap Y_1|}{U \subseteq X, V\subseteq B\setminus A, |V\cap Y_2| = p, \omega(A)+\omega(B) \le \omega(A-U+V)+\omega(B+U-V)} \\
        &\quad {}+ \max\Set{|U|}{U\subseteq X, V\subseteq Y_2, \omega(A)+\omega(B) \le \omega(A-U+V)+\omega(B+U-V)}.
    \end{align}

\begin{corollary}\label{cor:ultra-valuated}
  Let $\F$ be a valuated field of characteristic zero, equipped with a valuation $v$ which is trivial over $\Q$.
  Let $K \in \F^{r \times n}$ be a matrix over $\F$ and $A, B \in \binom{[n]}{r}$.
  For any $X\subseteq A\setminus B$ and $Y\subseteq B\setminus A$ and integer $p$ with $|Y| \le p \le |X|$, there exist $U \subseteq A \setminus B$ and $V \subseteq B \setminus A$ such that $|U \cap X|=p$, $Y \subseteq V$, $|U|=|V|\le p+|Y|$, and $\omega_K(A) + \omega_K(B) \le \omega_K(A-U+V)+\omega_K(B+U-V)$.
\end{corollary}
\end{comment}

\section{Conclusion}\label{sec:conclusion}
We close the paper by mentioning some open problems.

\begin{enumerate}[label=\arabic*.]
    \item For proving the exchange properties in \zcref{sec:harmonic,sec:gp} for matroids representable over a field $\F$ of characteristic zero, we relied on the fact that   
    for such a rank-$r$ matroid $\bM$ on a ground set $E$ with $|E|=2r$, there exists a harmonic function $g \colon \binom{E}{r} \to \F$ that is nonzero on a set $A\in \binom{E}{r}$ exactly if both $A$ and $E\setminus A$ are bases. 
    It remains an intriguing open problem to find further classes of matroids for which such a harmonic function, over, say, $\F=\Q$ exists, as all exchange properties shown in \zcref{sec:gp} extend to those classes.

    \item For matroids representable over a field of characteristic zero, we  proved several exchange properties in which the new basis pair can be obtained
    from the original one by a sequence of symmetric exchanges. It remains open whether such sequences can always be chosen to have polynomial length. In
    particular, it is open whether they can be found in (randomized) polynomial time.
    
    \item In \zcref{thm:weighted-equitability}, we showed a weighted version of the equitability theorem for matroids representable over a field of characteristic zero.
    This directly includes the existence of a matroid-constrained EF1 allocation for two agents with identical additive valuations.
    It remains open whether our framework could be used to show the analogous EF1 result for an arbitrary number of agents.
\end{enumerate}

\section*{Acknowledgments}

The authors are grateful to Kristóf Bérczi, Shinsaku Sakaue, and Yutaro Yamaguchi for initial discussions on the problem.
Taihei Oki was supported by JST FOREST Grant Number JPMJFR232L, JST BOOST Grant Number JPMJBY25A6, and JSPS KAKENHI Grant Number JP18J22141.
This research has been implemented with the support provided by the Lend\"ulet Programme of the Hungarian Academy of Sciences -- grant number LP2021-1/2021. This work was supported in part by EPSRC grant EP/X030989/1.

\paragraph{Data availability.}  %Needed for LSE.
No data are associated with this article. Data sharing is not applicable to this article.

\paragraph{AI disclosure.}
We used GPT-5.5 Pro to explore possible proof strategies for \zcref{thm:reconf-mep,lem:c-equitability,lem:c-main}, and used Claude Fable 5 to strengthen \zcref{thm:with_ranks} to its current form.
The authors independently verified and completed the resulting proofs.

\printbibliography[heading=bibintoc]

@ARTICLE{Shioura2022-tc,
  title     = {{M-convex function minimization under L1-distance constraint and
               its application to dock reallocation in bike-sharing system}},
  author    = "Shioura, Akiyoshi",
  journal   = "Mathematics of Operations Research",
  publisher = "INFORMS",
  volume    =  47,
  number    =  2,
  pages     = "1566--1611",
  month     =  may,
  year      =  2022,
  url       = "https://doi.org/10.1287/moor.2021.1180",
  doi       = "10.1287/moor.2021.1180",
  issn      = "0364-765X"
}

@article{greene1973multiple,
  title={A multiple exchange property for bases},
  author={Greene, Curtis},
  journal={Proceedings of the American Mathematical Society},
  volume={39},
  number={1},
  pages={45--50},
  year={1973},
  doi = {10.2307/2038987}
}

@article{brylawski1973some,
  title={Some properties of basic families of subsets},
  author={Brylawski, Thomas H},
  journal={Discrete Mathematics},
  volume={6},
  number={4},
  pages={333--341},
  year={1973},
  publisher={Elsevier},
  DOI = {10.1016/0012-365x(73)90064-2}
}

@article{woodall1974exchange,
  title={An exchange theorem for bases of matroids},
  author={Woodall, Douglas R},
  journal={Journal of Combinatorial Theory, Series B},
  volume={16},
  number={3},
  pages={227--228},
  year={1974},
  publisher={Elsevier},
  DOI = {10.1016/0095-8956(74)90067-7}
}

@article{greene1974another,
  title={Another exchange property for bases},
  author={Greene, Curtis},
  journal={Proceedings of the American Mathematical Society},
  volume={46},
  number={1},
  pages={155--156},
  year={1974},
  doi = {10.2307/2040500}
}

@BOOK{murota2010matrices,
  title     = "Matrices and Matroids for Systems Analysis",
  author    = "Murota, Kazuo",
  publisher = "Springer",
  address   = "Berlin",
  volume    =  20,
  series    = "Algorithms and Combinatorics",
  year      =  2000,
  doi       = "10.1007/978-3-642-03994-2",
  isbn      =  9783642039935
}

@inproceedings{equitability_soda,
  author = {Akrami,  Hannaneh and Raj,  Roshan and V\'{e}gh,  L\'{a}szl\'{o} A.},
  title = {Matroids are equitable},
  year = {2026},
  booktitle = "Proceedings of the 37th Annual ACM-SIAM Symposium on Discrete Algorithms (SODA '26)",
  pages = {5843--5860},
  doi = {10.1137/1.9781611978971.207},
  publisher = {Society for Industrial and Applied Mathematics}
}

@Article{	  frank1981weighted,
  title		= {A weighted matroid intersection algorithm},
  author	= {Frank, Andr{\'a}s},
  journal	= {Journal of Algorithms},
  volume	= {2},
  number	= {4},
  pages		= {328--336},
  year		= {1981},
  doi		= {10.1016/0196-6774(81)90032-8}
}

@BOOK{Spivey2019-kl,
  title     = "The Art of Proving Binomial Identities",
  author    = "Spivey, Michael Z",
  publisher = "CRC Press",
  address   = "New York",
  series    = "Discrete Mathematics and Its Applications",
  year      =  2019,
  DOI = {10.1201/9781351215824}
}

@article {kung1978alternating,
    AUTHOR = {Kung, Joseph P. S.},
     TITLE = {Alternating basis exchanges in matroids},
  JOURNAL = {Proceedings of the American Mathematical Society},
    VOLUME = {71},
      YEAR = {1978},
    NUMBER = {2},
     PAGES = {355--358},
      ISSN = {0002-9939,1088-6826},
   MRCLASS = {05B35},
  MRNUMBER = {500521},
MRREVIEWER = {Curtis\ Greene},
       DOI = {10.2307/2042864},
       URL = {https://doi.org/10.2307/2042864},
}

@ARTICLE{Whitney1935-kz,
  title   = "On the abstract properties of linear dependence",
  author  = "Whitney, Hassler",
  journal = "American Journal of Mathematics",
  volume  =  57,
  number  =  3,
  pages   = "509--533",
  year    =  1935,
  url     = "http://dx.doi.org/10.2307/2371182",
  doi     = "10.2307/2371182",
  issn    = "0002-9327"
}

@ARTICLE{Dress1990-uc,
  title   = "Valuated matroids: a new look at the greedy algorithm",
  author  = "Dress, Andreas W M and Wenzel, Walter",
  journal = "Applied Mathematics Letters",
  volume  =  3,
  number  =  2,
  pages   = "33--35",
  year    =  1990,
  url     = "http://dx.doi.org/10.1016/0893-9659(90)90009-Z",
  doi     = "10.1016/0893-9659(90)90009-Z",
  issn    = "0893-9659"
}

@article {lee2010submodular,
    author = {Lee, Jon and Sviridenko, Maxim and Vondr\'ak, Jan},
     TITLE = {Submodular maximization over multiple matroids via generalized
              exchange properties},
  JOURNAL = {Mathematics of Operations Research},
    VOLUME = {35},
      YEAR = {2010},
    NUMBER = {4},
     PAGES = {795--806},
       DOI = {10.1287/moor.1100.0463}
}

@article {buchbinder2023deterministic,
    AUTHOR = {Buchbinder, Niv and Feldman, Moran and Garg, Mohit},
     TITLE = {Deterministic {$(1/2+\varepsilon)$}-approximation for
              submodular maximization over a matroid},
  JOURNAL = {SIAM Journal on Computing},
    VOLUME = {52},
      YEAR = {2023},
    NUMBER = {4},
       DOI = {10.1137/19M125515X}
}

@article {traub2023beating,
    AUTHOR = {Traub, Vera and Vygen, Jens},
     TITLE = {Beating the integrality ratio for {$s$}-{$t$}-tours in graphs},
   JOURNAL = {SIAM Journal on Computing},
    VOLUME = {52},
      YEAR = {2023},
    NUMBER = {6},
     PAGES = {FOCS18-37--FOCS18-84},
       DOI = {10.1137/18M1227135}
}

@article {calinescu2024approximation,
    AUTHOR = {C\u alinescu, Gruia and Wang, Xiaolang},
     TITLE = {A {$(1/2+1/60)$}-approximation algorithm for {M}aximum {W}eight {S}eries-{P}arallel {S}ubgraph},
  JOURNAL = {Discrete Applied Mathematics},
    VOLUME = {354},
      YEAR = {2024},
     PAGES = {241--261},
       DOI = {10.1016/j.dam.2023.09.019},
       URL = {https://doi.org/10.1016/j.dam.2023.09.019},
}

@inproceedings{huang2022efficient,
    author = {Huang, Lingxiao and Wang, Yuyi and Yang, Chunxue and Zhou, Huanjian},
    title = {Efficient Submodular Optimization under Noise: Local Search is Robust},
    booktitle = {Advances in Neural Information Processing Systems},
    pages = {26122--26134},
    publisher = {Curran Associates, Inc.},
    url = {https://proceedings.neurips.cc/paper_files/paper/2022/file/a774503daed55eb53c634847ae071ec7-Paper-Conference.pdf},
    volume = {35},
    year = {2022}
}

@inproceedings{oki2024no,
    author = {Oki, Taihei and Sakaue, Shinsaku},
    booktitle = {Advances in Neural Information Processing Systems (NeurIPS '24)},
    pages = {57418--57438},
    publisher = {Curran Associates, Inc.},
    title = {No-Regret {M${}^\natural$}-Concave Function Maximization: Stochastic Bandit Algorithms and {NP}-Hardness of Adversarial Full-Information Setting},
    volume = {37},
    year = {2024}
}

@ARTICLE{li2019scalable,
  author={Li, Shuni and Jin, Yan and Shuman, David I},
  title={Scalable $M$-Channel Critically Sampled Filter Banks for Graph Signals}, 
  journal={IEEE Transactions on Signal Processing}, 
  year={2019},
  volume={67},
  number={15},
  pages={3954--3969},
  doi={10.1109/TSP.2019.2923142}
}

@BOOK{murota2003discrete,
  title     = "Discrete Convex Analysis",
  author    = "Murota, Kazuo",
  publisher = "SIAM",
  address   = "Philadelphia",
  volume    =  10,
  series    = "Monographs on Discrete Mathematics and Applications",
  year      =  2003,
  doi       = "10.1137/1.9780898718508"
}

@ARTICLE{fujishige2003a,
  title     = {{A note on Kelso and Crawford's gross substitutes condition}},
  author    = "Fujishige, Satoru and Yang, Zaifu",
  journal   = "Mathematics of Operations Research",
  publisher = "Institute for Operations Research and the Management Sciences
               (INFORMS)",
  volume    =  28,
  number    =  3,
  pages     = "463--469",
  year      =  2003,
  url       = "https://pubsonline.informs.org/doi/10.1287/moor.28.3.463.16393",
  doi       = "10.1287/moor.28.3.463.16393"
}

@ARTICLE{murota2018a,
  title     = "{A stronger multiple exchange property for M${}^\natural$-concave
               functions}",
  author    = "Murota, Kazuo",
  journal   = "Japan Journal of Industrial and Applied Mathematics",
  publisher = "Springer Nature",
  volume    =  35,
  number    =  1,
  pages     = "411--421",
  year      =  2018,
  url       = "https://link.springer.com/article/10.1007/s13160-017-0278-4",
  doi       = "10.1007/s13160-017-0278-4"
}

@inproceedings{nematollahi2026fixed,
  author = {Nematollahi,  Shamisa and Vladu,  Adrian and Zhao,  Junyao},
  title = {Fixed-Parameter Tractable Submodular Maximization over a Matroid},
  year = {2026},
  booktitle =	{17th Innovations in Theoretical Computer Science Conference (ITCS 2026)},
  series = {LIPIcs},
  volume =	{362},
  pages = {105:1--105:22},
  publisher =	{Schloss Dagstuhl -- Leibniz-Zentrum f{\"u}r Informatik},
  doi = {10.4230/LIPICS.ITCS.2026.105}
}

@article {greene1975some,
    AUTHOR = {Greene, Curtis and Magnanti, T. L.},
     TITLE = {Some abstract pivot algorithms},
   JOURNAL = {SIAM Journal on Applied Mathematics},
    VOLUME = {29},
      YEAR = {1975},
    NUMBER = {3},
     PAGES = {530--539},
       DOI = {10.1137/0129045}
}

@article {traub2025better,
    AUTHOR = {Traub, Vera and Zenklusen, Rico},
     TITLE = {Better-than-2 approximations for weighted tree augmentation and applications to {S}teiner tree},
  JOURNAL = {Journal of the ACM},
    VOLUME = {72},
      YEAR = {2025},
    NUMBER = {2},
     PAGES = {Article 16},
     doi = {10.1145/3722101}
}

@article {kotlar2013serial,
    AUTHOR = {Kotlar, Daniel and Ziv, Ran},
     TITLE = {On serial symmetric exchanges of matroid bases},
  JOURNAL = {Journal of Graph Theory},
    VOLUME = {73},
      YEAR = {2013},
    NUMBER = {3},
     PAGES = {296--304},
       DOI = {10.1002/jgt.21675}
}

@Article{	  gabow1976decomposing,
  title		= {Decomposing symmetric exchanges in matroid bases},
  author	= {Gabow, Harold},
  journal	= {Mathematical Programming},
  volume	= {10},
  number	= {1},
  pages		= {271--276},
  year		= {1976},
  doi		= {10.1007/bf01580672}
}

@ARTICLE{edmonds1971matroids,
  title   = "{Matroids and the greedy algorithm}",
  author  = "Edmonds, Jack",
  journal = "Mathematical Programming",
  volume  =  1,
  pages   = "127--136",
  year    =  1971,
  url     = "http://dx.doi.org/10.1007/BF01584082",
  doi     = "10.1007/BF01584082"
}

@INPROCEEDINGS{streeter2008an,
  title     = "{An online algorithm for maximizing submodular functions}",
  author    = "Streeter, Matthew J and Golovin, D",
  booktitle = "Advances in Neural Information Processing Systems 21 (NIPS '08)",
  publisher = "Curran Associates, Inc.",
  volume    =  21,
  pages     = "1577--1584",
  year      =  2008
}

@ARTICLE{niazadeh2023online,
  title     = "{Online learning via offline greedy algorithms: applications in
               market design and optimization}",
  author    = "Niazadeh, Rad and Golrezaei, Negin and Wang, Joshua and Susan,
               Fransisca and Badanidiyuru, Ashwinkumar",
  journal   = "Management Science",
  publisher = "Institute for Operations Research and the Management Sciences
               (INFORMS)",
  volume    =  69,
  number    =  7,
  pages     = "3797--3817",
  year      =  2023,
  url       = "https://pubsonline.informs.org/doi/10.1287/mnsc.2022.4558",
  doi       = "10.1287/mnsc.2022.4558"
}

@INPROCEEDINGS{nie2022an,
  title     = "{An explore-then-commit algorithm for submodular maximization
               under full-bandit feedback}",
  author    = "Nie, G and Agarwal, Mridul and Umrawal, A and Aggarwal, V and
               Quinn, C J",
  booktitle = "Proceedings of the 38th Conference on Uncertainty in
               Artificial Intelligence",
  volume    =  180,
  pages     = "1541--1551",
  series    = "PMLR",
  year      =  2022,
  url       = "https://proceedings.mlr.press/v180/nie22a.html"
}

@INPROCEEDINGS{oki2023faster,
  title     = "{Faster discrete convex function minimization with predictions:
               the M-convex case}",
  author    = "Oki, Taihei and Sakaue, Shinsaku",
  booktitle = "{Advances in Neural Information Processing Systems (NeurIPS
               '23)}",
  publisher = "Curran Associates, Inc.",
  volume    =  36,
  pages     = "68576–68588",
  year      =  2023,
  url       = "https://proceedings.neurips.cc/paper_files/paper/2023/hash/d84c0dd9b1bfeee361f3268dcaebf849-Abstract-Conference.html"
}

@INCOLLECTION{edmonds1970submodular,
  title     = "{Submodular functions, matroids, and certain polyhedra}",
  author    = "Edmonds, Jack",
  booktitle = "{Combinatorial Structures and Their Applications}",
  publisher = "Gordon and Breach",
  address   = "New York",
  pages     = "69--87",
  year      =  1970,
  url       = "http://link.springer.com/10.1007/3-540-36478-1_2",
  doi       = "10.1007/3-540-36478-1_2"
}

@ARTICLE{mason1972on,
  title     = "{On a class of matroids arising from paths in graphs}",
  author    = "Mason, J H",
  journal   = "Proceedings of the London Mathematical Society. Third Series",
  publisher = "Wiley",
  volume    = "s3-25",
  number    =  1,
  pages     = "55--74",
  year      =  1972,
  url       = "http://dx.doi.org/10.1112/plms/s3-25.1.55",
  doi       = "10.1112/plms/s3-25.1.55"
}

@BOOK{oxley2011matroid,
  title     = "{Matroid Theory}",
  author    = "Oxley, James G",
  publisher = "Oxford University Press",
  address   = "New York",
  edition   = "second",
  series    = "Oxford Graduate Texts in Mathematics",
  year      =  2011,
  url       = "http://www.oxfordscholarship.com/view/10.1093/acprof:oso/9780198566946.001.0001/acprof-9780198566946",
  doi       = "10.1093/acprof:oso/9780198566946.001.0001"
}

@BOOK{schrijver2003combinatorial,
  title     = "{Combinatorial Optimization}",
  author    = "Schrijver, Alexander",
  publisher = "Springer",
  address   = "Berlin",
  volume    =  24,
  series    = "Algorithms and Combinatorics",
  year      =  2003
}

@article {kotlar2013circuits,
    AUTHOR = {Kotlar, Daniel},
     TITLE = {On circuits and serial symmetric basis-exchange in matroids},
   JOURNAL = {SIAM Journal on Discrete Mathematics},
    VOLUME = {27},
      YEAR = {2013},
    NUMBER = {3},
     PAGES = {1274--1286},
       DOI = {10.1137/120867603}
}

@INPROCEEDINGS{horsch2024problems,
  title     = "{Problems on group-labeled matroid bases}",
  author    = "Hörsch, Florian and Imolay, András and Mizutani, Ryuhei and Oki,
               Taihei and Schwarcz, Tamás",
  booktitle = "{Proceedings of the 51st International Colloquium on Automata,
               Languages and Programming (ICALP ’24)}",
  publisher = "Schloss Dagstuhl --- Leibniz-Zentrum für Informatik",
  volume    =  297,
  pages     = "86:1--86:20",
  series    = "LIPIcs",
  year      =  2024,
  url       = "https://drops.dagstuhl.de/entities/document/10.4230/LIPIcs.ICALP.2024.86",
  doi       = "10.4230/LIPICS.ICALP.2024.86"
}

@ARTICLE{nakasawa1935zur,
  title     = "{Zur Axiomatik der linearen Abhängigkeit. I}",
  author    = "Nakasawa, Takeo",
  journal   = "Science Reports of the Tokyo Bunrika Daigaku, Section A",
  publisher = "Editorial Committee of Tsukuba Journal of Mathematics",
  volume    =  2,
  number    =  43,
  pages     = "235--255",
  year      =  1935,
  url       = "https://www.jstor.org/stable/43700141"
}

@TechReport{	  krogdahl1974combinatorial,
  title		= {A combinatorial base for some optimal matroid intersection
		  algorithms},
  author	= {Krogdahl, Stein},
  year		= {1974},
  number	= {STAN-CS-74-468},
  institution	= {Computer Science Department, Stanford University},
  address = {Stanford}
}

@TechReport{	  krogdahl1976combinatorial,
  title		= {A combinatorial proof for a weighted matroid intersection
		  algorithm},
  author	= {Krogdahl, Stein},
  institution	= {Institute of Mathematical and Physical Sciences, University of Tromso},
  year		= {1976},
  address = {Tromso},
  number	= {Computer Science Report 17}
}

@Article{	  krogdahl1977dependence,
  author	= {Krogdahl, Stein},
  title		= {The dependence graph for bases in matroids},
  journal	= {Discrete Mathematics},
  volume	= {19},
  year		= {1977},
  number	= {1},
  pages		= {47--59},
  doi		= {10.1016/0012-365X(77)90118-2}
}

@Article{	  white1980unique,
  doi		= {10.1016/0024-3795(80)90209-8},
  title		= {A unique exchange property for bases},
  author	= {White, Neil L},
  journal	= {Linear Algebra and its Applications},
  volume	= {31},
  pages		= {81--91},
  year		= {1980}
}

@Article{	  brualdi1969comments,
  doi		= {10.1017/s000497270004140x},
  year		= {1969},
  publisher	= {Cambridge University Press},
  volume	= {1},
  number	= {2},
  pages		= {161--167},
  author	= {Richard A. Brualdi},
  title		= {Comments on bases in dependence structures},
  journal	= {Bulletin of the Australian Mathematical Society}
}

@article{berczi2026reconfiguration,
  author	= {B\'{e}rczi, Krist\'{o}f and M\'{a}trav\"{o}lgyi, Bence and
		  Schwarcz, Tam\'{a}s},
  title = {Reconfiguration of basis pairs in regular matroids},
  journal = {Journal of Combinatorial Theory,  Series B},
  volume = {177},
  DOI = {10.1016/j.jctb.2025.10.009},
  year = {2026},
  pages = {105--142}
}

@ARTICLE{camerini1992random,
  title   = "{Random pseudo-polynomial algorithms for exact matroid problems}",
  author  = "Camerini, P M and Galbiati, G and Maffioli, F",
  journal = "Journal of Algorithms",
  volume  =  13,
  number  =  2,
  pages   = "258--273",
  year    =  1992,
  url     = "https://www.sciencedirect.com/science/article/pii/0196677492900188",
  doi     = "10.1016/0196-6774(92)90018-8"
}

@BOOK{maclagan2015introduction,
  title     = "{Introduction to Tropical Geometry}",
  author    = "Maclagan, Diane and Sturmfels, Bernd",
  publisher = "American Mathematical Society",
  address   = "Providence",
  volume    =  161,
  series    = "Graduate Studies in Mathematics",
  year      =  2015,
  url       = "https://janos.cs.technion.ac.il/COURSES/238900-13/Tropical/MaclaganSturmfels.pdf",
  doi       = "10.1090/gsm/161"
}

@BOOK{gelfand2008discriminants,
  title     = "{Discriminants, Resultants, and Multidimensional Determinants}",
  author    = "Gelfand, Israel M and Kapranov, Mikhail and Zelevinsky, Andrei",
  publisher = "Birkhauser Boston",
  address   = "Secaucus, NJ",
  series    = "Modern Birkhäuser Classics",
  year      =  2008,
  url       = "http://dx.doi.org/10.1007/978-0-8176-4771-1",
  doi       = "10.1007/978-0-8176-4771-1"
}

@article {paes2017gross,
    AUTHOR = {Paes Leme, Renato},
     TITLE = {Gross substitutability: an algorithmic survey},
   JOURNAL = {Games and Economic Behavior},
    VOLUME = {106},
      YEAR = {2017},
     PAGES = {294--316},
   MRCLASS = {91B52},
  MRNUMBER = {3725856},
       DOI = {10.1016/j.geb.2017.10.016}
}

@ARTICLE{filmus2016an,
  title     = "{An orthogonal basis for functions over a slice of the Boolean
               hypercube}",
  author    = "Filmus, Yuval",
  journal   = "Electronic Journal of Combinatorics",
  publisher = "The Electronic Journal of Combinatorics",
  volume    =  23,
  number    =  1,
  pages     = "P1.23",
  year      =  2016,
  url       = "https://www.combinatorics.org/ojs/index.php/eljc/article/view/v23i1p23",
  doi       = "10.37236/4567"
}

@ARTICLE{filmus2019harmonicity,
  title   = "{Harmonicity and invariance on slices of the Boolean cube}",
  author  = "Filmus, Yuval and Mossel, Elchanan",
  journal = "Probability Theory and Related Fields",
  volume  =  175,
  number  = "3-4",
  pages   = "721--782",
  year    =  2019,
  url     = "https://link.springer.com/article/10.1007/s00440-019-00900-w",
  doi     = "10.1007/s00440-019-00900-w"
}

@ARTICLE{frankl1989old,
  title   = "{Old and new proofs of the Erdős-Ko-Rado theorem}",
  author  = "Frankl, Péter and Graham, Ron L",
  journal = "Journal of Sichuan University (Natural Science Edition)",
  volume  =  26,
  pages   = "112--122",
  year    =  1989,
  url     = "https://www.math.ucsd.edu/~ronspubs/90_03_erdos_ko_rado.pdf"
}

@ARTICLE{gottlieb1966a,
  title   = "{A certain class of incidence matrices}",
  author  = "Gottlieb, D H",
  journal = "Proceedings of the American Mathematical Society",
  volume  =  17,
  number  =  6,
  pages   =  1233,
  year    =  1966,
  url     = "https://www.jstor.org/stable/2035716",
  doi     = "10.2307/2035716"
}

@BOOK{ceccherini-silberstein2013representation,
  title     = "{Representation Theory of the Symmetric Groups}",
  author    = "Ceccherini-Silberstein, Tullio and Scarabotti, Fabio and Tolli,
               Filippo",
  publisher = "Cambridge University Press",
  address   = "Cambridge",
  volume    =  121,
  series    = "Cambridge studies in advanced mathematics",
  year      =  2013,
  doi       = "10.1017/cbo9781139192361"
}

@misc{larson2026counterexamples,
      title={Counterexamples to two conjectures about matroids}, 
      author={Matt Larson},
      year={2026},
      eprint={2607.02208},
      archivePrefix={arXiv},
      doi = {10.48550/arXiv.2607.02208}
}

@article{equitability_journal,
  title = {Matroids are Equitable},
  volume = {46},
  DOI = {10.1007/s00493-026-00217-y},
  number = {3},
  journal = {Combinatorica},
  author = {Akrami,  Hannaneh and Liu,  Siyue and Raj,  Roshan and V\'{e}gh,  L\'{a}szl\'{o} A.},
  year = {2026},
}

@article{mcguinness2022serial,
  author      = {Sean McGuinness},
  title       = {Serial exchanges in matroids},
  journal     = {Discrete Mathematics},
  volume      = {345},
  number      = {2},
  articleno   = {112679},
  year        = {2022},
  doi         = {10.1016/j.disc.2021.112679},
}

@InProceedings{	  biswas2018fair,
  doi		= {10.24963/ijcai.2018/13},
  title		= {Fair division under cardinality constraints},
  author	= {Biswas, Arpita and Barman, Siddharth},
  booktitle	= {IJCAI'18: Proceedings of the 27th International Joint
		  Conference on Artificial Intelligence},
  pages		= {91--97},
  year		= {2018},
  publisher	= {International Joint Conferences on Artificial
		  Intelligence, ijcai.org}
}

@Article{	  blasiak2008toric,
  doi		= {10.1007/s00493-008-2256-6},
  title		= {The toric ideal of a graphic matroid is generated by
		  quadrics},
  author	= {Blasiak, Jonah},
  journal	= {Combinatorica},
  volume	= {28},
  number	= {3},
  pages		= {283--297},
  year		= {2008}
}

@Article{	  lason2014toric,
  doi		= {10.1016/j.aim.2014.03.004},
  title		= {On the toric ideal of a matroid},
  author	= {Laso{\'n}, Micha{\l} and Micha{\l}ek, Mateusz},
  journal	= {Advances in Mathematics},
  volume	= {259},
  pages		= {1--12},
  year		= {2014}
}

@Article{	  dror2023fair,
  doi		= {10.1613/jair.1.13779},
  title		= {On fair division under heterogeneous matroid constraints},
  author	= {Dror, Amitay and Feldman, Michal and Segal-Halevi, Erel},
  journal	= {Journal of Artificial Intelligence Research},
  volume	= {76},
  pages		= {567--611},
  year		= {2023}
}

@InProceedings{feldman2026submodular,
  author =	{Feldman, Moran and Ward, Justin},
  title =	{Submodular Maximization over a Matroid $k$-Intersection: Multiplicative Improvement over Greedy},
  booktitle =	{53rd International Colloquium on Automata, Languages, and Programming (ICALP 2026)},
  pages =	{89:1--89:23},
  series =	{Leibniz International Proceedings in Informatics (LIPIcs)},
  year =	{2026},
  volume =	{374},
  editor =	{Bhattacharya, Sayan and Nanongkai, Danupon and Benedikt, Michael and Puppis, Gabriele},
  publisher =	{Schloss Dagstuhl -- Leibniz-Zentrum f{\"u}r Informatik},
  address =	{Dagstuhl, Germany},
  doi =		{10.4230/LIPIcs.ICALP.2026.89}
}

\end{document}